%
\documentclass[runningheads]{llncs}
\usepackage{latexsym, amssymb, amsfonts,amsmath}
\usepackage{graphicx}
\usepackage[all,knot,arc,import,poly]{xy}

%

\begin{document}
\title{Classifying toposes for some theories of $\mathcal{C}^{\infty}-$rings}
%
%
\author{Jean Cerqueira Berni\inst{1} \and
Hugo Luiz Mariano\inst{2}}
\authorrunning{J. C. Berni and H. L. Mariano}
%
\institute{Institute of Mathematics and Statistics, University of S\~{a}o Paulo,
Rua do Mat\~{a}o, 1010, S\~{a}o Paulo - SP, Brazil. \email{jeancb@ime.usp.br} \and
Institute of Mathematics and Statistics, University of S\~{a}o Paulo,
Rua do Mat\~{a}o, 1010, S\~{a}o Paulo - SP, Brazil, \email{hugomar@ime.usp.br}}
\maketitle              
%
%

\begin{abstract}
	In this paper we present classifying toposes for the following theories: the theory of $\mathcal{C}^{\infty}-$rings, the theory of local $\mathcal{C}^{\infty}-$rings and the theory of von Neumann regular $\mathcal{C}^{\infty}-$rings. The classifying toposes for the first two theories were stated without proof by Ieke Moerdijk and Gonzalo Reyes on the page 366 of \cite{MSIA}, where they assert  that the topos ${\rm \bf Set}^{\mathcal{C}^{\infty}{\rm \bf Rng}_{\rm fp}}$ classifies the theory of $\mathcal{C}^{\infty}-$rings and that the smooth Zariski topos classifies the theory of local $\mathcal{C}^{\infty}-$rings.
We also give a description of the classifying topos for the theory of von Neumann regular $\mathcal{C}^{\infty}-$rings.
\end{abstract}

\keywords{Classifying Toposes, $\mathcal{C}^{\infty}-$rings, local $\mathcal{C}^{\infty}-$rings, von Neumann regular $\mathcal{C}^{\infty}-$rings.}


\section*{Introduction}

\hspace{0.6cm} Loosely speaking, a $\mathcal{C}^{\infty}-$ring is an structure that interprets all symbols of (finitary) smooth real functions, preserving all the equational relations between them. According to I. Moerdijk and G. Reyes in \cite{rings1}, the original motivation to introduce and study $\mathcal{C}^{\infty}-$rings was to construct topos-models for Synthetic Differential Geometry. Their introduction circumvent some obstacles for a synthetic framing for Differential Geometry in ${\rm \bf Set}$, like, for instance, the lack, in the category of smooth manifolds, of finite inverse limits (in particular, even binary pullbacks of $\mathcal{C}^{\infty}-$manifolds are not manifolds, unless a condition of tranversality is fulfilled) and the absence of a convenient language to deal explicitly and directly with structures in the``infinitely small" level (cf. \cite{MSIA}). The existence of nilpotent elements, which provides us with a language that legitimates the use of geometric intuition does not come for free: the essential Kock-Lawvere axiom and its consequences, for example, are not compatible with the principle of the excluded middle (see \cite{Kock}). Thus, in order to deal with $\mathcal{C}^{\infty}-$rings one must give up on Classical Logic, and this necessarily leads us to the need for ``toposes'' - which can be seen as ``mathematical worlds'' that are governed by an internal intuitionistic logic.\\

The theory of $\mathcal{C}^{\infty}-$rings can be interpreted in any category $\mathcal{C}$ with finite products. However, as we consider theories of $\mathcal{C}^{\infty}-$rings that require its models to satisfy axioms with connectives such as ``disjunctions'' (which is the case for the theory of local $\mathcal{C}^{\infty}-$rings), we need ``richer categorical constructions'' (such as the possibility of forming unions of subobjects) in order to interpret them meaningfully  in any topos. \\

It is a well-known result that some types of first order theories - depending on the language and on the structure of their axioms  always have a classifying topos (cf. \cite{MakkaiReyes}). Among the first order theories which have a classifying topos we find the so-called ``geometric theories", \textit{i.e.}, theories (possibly infinitary and poli-sorted) whose axioms consist of implications between geometric formulas.\\

In this paper we are concerned with a concrete description of the classifying topoi of the (equational) theory of $\mathcal{C}^{\infty}-$rings, the (geometric) theory of the local $\mathcal{C}^{\infty}-$rings and the (equational) theory of von Neumann regular $\mathcal{C}^{\infty}-$rings. We present a step-by-step construction of such topoi, mimicking the construction of the classifying topoi for the theory of rings and for the theory of local rings given in \cite{ShvLog} with some adaptations. \\

\subsection*{Overview of the Paper}

The organization of this paper is as follows.\\

In the first section we present some concepts and preliminary results on categorial logic, classifying toposes and $\mathcal{C}^{\infty}-$rings.\\

In section 2 we give a comprehensive description of the classifying topos for the theory of $\mathcal{C}^{\infty}-$rings as a presheaf category. In the third section we give a detailed description of the smooth Zariski (Grothendieck) topology and its corresponding sheaf topos as the classifying topos for the theory of local $\mathcal{C}^{\infty}-$rings.\\

In the final section we introduce the notion of a von Neumann regular $\mathcal{C}^{\infty}-$ring along with some of its characterizations and we describe the classifying topos for the (first-order) theory of von Neumann regular $\mathcal{C}^{\infty}-$rings. We also present some related topics  which  can be developed in future works.

\section{Preliminaries}

\subsection{On $C^\infty$-rings}

\hspace{0.5cm}In order to formulate and study the concept of $\mathcal{C}^{\infty}-$ring, we are going to use a first order language $\mathcal{L}$ with a denumerable set of variables (${\rm \bf Var}(\mathcal{L}) = \{ x_1, x_2, \cdots, x_n, \cdots\}$) whose nonlogical symbols are the symbols of $\mathcal{C}^{\infty}-$functions from $\mathbb{R}^m$ to $\mathbb{R}^n$, with $m,n \in \mathbb{N}$, \textit{i.e.}, the non-logical symbols consist only of function symbols, described as follows:\\

For each $n \in \mathbb{N}$, the $n-$ary \textbf{function symbols} of the set $\mathcal{C}^{\infty}(\mathbb{R}^n, \mathbb{R})$, \textit{i.e.}, $\mathcal{F}_{(n)} = \{ f^{(n)} | f \in \mathcal{C}^{\infty}(\mathbb{R}^n, \mathbb{R})\}$. So the set of function symbols of our language is given by:
      $$\mathcal{F} = \bigcup_{n \in \mathbb{N}} \mathcal{F}_{(n)} = \bigcup_{n \in \mathbb{N}} \mathcal{C}^{\infty}(\mathbb{R}^n)$$
      Note that our set of constants is $\mathbb{R}$, since it can be identified with the set of all $0-$ary function symbols, \textit{i.e.}, ${\rm \bf Const}(\mathcal{L}) = \mathcal{F}_{(0)} = \mathcal{C}^{\infty}(\mathbb{R}^0) \cong \mathcal{C}^{\infty}(\{ *\}) \cong \mathbb{R}$.\\

The terms of this language are defined, in the usual way, as the smallest set which comprises the individual variables, constant symbols and $n-$ary function symbols followed by $n$ terms ($n \in \mathbb{N}$).\\

Apart from the functorial definition we gave in the introduction, we have many equivalent descriptions. We focus, first, in the following description of a $\mathcal{C}^{\infty}-$ring in ${\rm \bf Set}$.\\

\begin{definition}\label{cabala} A \textbf{$\mathcal{C}^{\infty}-$structure} on a set $A$ is a pair $ \mathfrak{A} =(A,\Phi)$, where:

$$\begin{array}{cccc}
\Phi: & \bigcup_{n \in \mathbb{N}} \mathcal{C}^{\infty}(\mathbb{R}^n, \mathbb{R})& \rightarrow & \bigcup_{n \in \mathbb{N}} {\rm Func}\,(A^n; A)\\
      & (f: \mathbb{R}^n \stackrel{\mathcal{C}^{\infty}}{\to} \mathbb{R}) & \mapsto & \Phi(f) := (f^{A}: A^n \to A)
\end{array},$$

that is, $\Phi$ interprets the \textbf{symbols}\footnote{here considered simply as syntactic symbols rather than functions.} of all smooth real functions of $n$ variables as $n-$ary function symbols on $A$.
\end{definition}

We call a $\mathcal{C}^{\infty}-$struture $\mathfrak{A} = (A, \Phi)$ a \textbf{$\mathcal{C}^{\infty}-$ring} if it preserves  projections and all equations between smooth functions. We have the following:

\begin{definition}\label{CravoeCanela}Let $\mathfrak{A}=(A,\Phi)$ be a $\mathcal{C}^{\infty}-$structure. We say that $\mathfrak{A}$ (or, when there is no danger of confusion, $A$) is a \textbf{$\mathcal{C}^{\infty}-$ring} if the following is true:\\

$\bullet$ Given any $n,k \in \mathbb{N}$ and any projection $p_k: \mathbb{R}^n \to \mathbb{R}$, we have:

$$\mathfrak{A} \models (\forall x_1)\cdots (\forall x_n)(p_k(x_1, \cdots, x_n)=x_k)$$

$\bullet$ For every $f, g_1, \cdots g_n \in \mathcal{C}^{\infty}(\mathbb{R}^m, \mathbb{R})$ with $m,n \in \mathbb{N}$, and every $h \in \mathcal{C}^{\infty}(\mathbb{R}^n, \mathbb{R})$ such that $f = h \circ (g_1, \cdots, g_n)$, one has:
$$\mathfrak{A} \models (\forall x_1)\cdots (\forall x_m)(f(x_1, \cdots, x_m)=h(g(x_1, \cdots, x_m), \cdots, g_n(x_1, \cdots, x_m)))$$
\end{definition}

\begin{definition}Let $(A, \Phi)$ and $(B,\Psi)$ be two $\mathcal{C}^{\infty}-$rings. A function $\varphi: A \to B$ is called a \textbf{morphism of $\mathcal{C}^{\infty}-$rings} or \textbf{$\mathcal{C}^{\infty}$-homomorphism} if for any $n \in \mathbb{N}$ and any $f: \mathbb{R}^n \stackrel{\mathcal{C}^{\infty}}{\to} \mathbb{R}$ the following diagram commutes:
$$\xymatrixcolsep{5pc}\xymatrix{
A^n \ar[d]_{\Phi(f)}\ar[r]^{\varphi^{(n)}} & B^n \ar[d]^{\Psi(f)}\\
A \ar[r]^{\varphi^{}} & B
}$$
 \textit{i.e.}, $\Psi(f) \circ \varphi^{(n)} = \varphi^{} \circ \Phi(f)$.
\end{definition}

\begin{remark}
Observe that $\mathcal{C}^{\infty}-$structures, together with their morphisms compose a category, that we denote by $\mathcal{C}^{\infty}{\rm \bf Str}$, and that $\mathcal{C}^{\infty}-$rings, together with all the $\mathcal{C}^{\infty}-$homomorphisms between $\mathcal{C}^{\infty}-$rings compose a full subcategory of $\mathcal{C}^{\infty}{\rm \bf Rng}$. In particular, since $\mathcal{C}^{\infty}{\rm \bf Rng}$ is a ``variety of algebras'' (it is a class of $\mathcal{C}^{\infty}-$structures which satisfy a given set of equations), it is closed under substructures, homomorphic images and producs, by \textbf{Birkhoff's HSP Theorem}. Moreover: 

$\bullet$ $\mathcal{C}^{\infty}{\rm \bf Rng}$ is a concrete category and the forgetful functor, $ U :  \mathcal{C}^{\infty}{\rm \bf Rng}  \to Set$ creates directed inductive colimits. Since $\mathcal{C}^{\infty}{\rm \bf Rng}$ is a variety of algebras, it has all (small) limits and (small) colimits. In particular, it has binary coproducts, that is, given any two $\mathcal{C}^{\infty}-$rings $A$ and $B$, we have their coproduct $A \stackrel{\iota_A}{\rightarrow} A\otimes_{\infty} \stackrel{\iota_B}{\leftarrow} B$;

$\bullet$ Each set $X$ freely generates a $C^\infty$-ring, $L(X)$, as follows:\\
-  for any finite set $X'$ with $\sharp X' = n$ we have $ L(X')= \mathcal{C}^{\infty}(\mathbb{R}^{X'}) \cong \mathcal{C}^\infty(\mathbb{R}^n, \mathbb{R})$ is the free $C^\infty$-ring on $n$ generators, $n \in \mathbb{N}$;\\
- for a general set, $X$, we take $L(X) = \mathcal{C}^{\infty}(\mathbb{R}^X):= \varinjlim_{X' \subseteq_{\rm fin} X} \mathcal{C}^{\infty}(\mathbb{R}^{X'})$;

$\bullet$ Given any $\mathcal{C}^{\infty}-$ring $A$ and a set, $X$, we can freely adjoin the set $X$ of variables to $A$ with the following construction: $A\{ X\}:= A \otimes_{\infty} L(X)$. The elements of $A\{ X\}$ are usually called $\mathcal{C}^{\infty}-$polynomials;

$\bullet$ The congruences of $\mathcal{C}^{\infty}-$rings are classified by their ``ring-theoretical'' ideals;

$\bullet$ Every $\mathcal{C}^{\infty}-$ring is the homomorphic image of some free $\mathcal{C}^{\infty}-$ring determined by some set, being isomorphic to the quotient of a free $\mathcal{C}^{\infty}-$ring by some ideal.\\

\end{remark}

Within the category of $\mathcal{C}^{\infty}-$rings, we have two special subcategories, that we define in the sequel.\\

\begin{definition}A $\mathcal{C}^{\infty}-$ring $A$ is \textbf{finitely generated} whenever there is some $n \in \mathbb{N}$ and some ideal $I \subseteq \mathcal{C}^{\infty}(\mathbb{R}^n)$ such that $A \cong \dfrac{\mathcal{C}^{\infty}(\mathbb{R}^{n})}{I}$. The category of all finitely generated $\mathcal{C}^{\infty}-$rings is denoted by $\mathcal{C}^{\infty}{\rm \bf Rng}_{\rm fg}$.
\end{definition}

\begin{definition}
A $\mathcal{C}^{\infty}-$ring is \textbf{finitely presented} whenever there is some $n \in \mathbb{N}$ and some \underline{finitely generated ideal} $I \subseteq \mathcal{C}^{\infty}(\mathbb{R}^n)$ such that $A \cong \dfrac{\mathcal{C}^{\infty}(\mathbb{R}^{n})}{I}$.\\

Whenever $A$ is a finitely presented $\mathcal{C}^{\infty}-$ring, there is some $n \in \mathbb{N}$ and some $f_1, \cdots, f_k \in \mathcal{C}^{\infty}(\mathbb{R}^n)$ such that:\\

$$ A = \dfrac{\mathcal{C}^{\infty}(\mathbb{R}^n)}{\langle f_1, \cdots, f_k\rangle}$$

The category of all finitely presented $\mathcal{C}^{\infty}-$rings is denoted by $\mathcal{C}^{\infty}{\rm \bf Rng}_{\rm fp}$
\end{definition}

\begin{remark}
The categories $\mathcal{C}^{\infty}{\rm \bf Rng}_{{\rm fg}}$ and $\mathcal{C}^{\infty}{\rm \bf Rng}_{{\rm fp}}$ are closed under initial objects, binary coproducts and binary coequalizers. Thus, they are finitely co-complete categories, that is, they have all finite colimits (for a proof of this fact we refer to the chapter 1 of \cite{tese}).\\

Since $\mathcal{C}^{\infty}{\rm \bf Rng}_{{\rm fp}}$ has all finite colimits, it follows that $\mathcal{C}^{\infty}{\rm \bf Rng}_{{\rm fp}}^{{\rm op}}$ has all finite limits.
\end{remark}

\begin{remark}
An $\mathbb{R}-$algebra $A$ in a category with finite limits, $\mathcal{C}$, may be regarded as a finite product preserving functor from the category ${\rm \bf Pol}$, whose objects are given by ${\rm Obj}\,({\rm \bf Pol}) =\{ \mathbb{R}^n | n \in \mathbb{N}\}$, and whose morphisms are given by polynomial functions between them, ${\rm Mor}\,({\rm \bf Pol}) = \{ \mathbb{R}^m \stackrel{p}{\rightarrow} \mathbb{R}^n | m,n \in \mathbb{N}, p \,\, {\rm polynomial}\}$, to $\mathcal{C}$, that is:

$$A: {\rm \bf Pol} \rightarrow \mathcal{C}.$$

In this sense, an $\mathbb{R}-$algebra $A$ is a functor which interprets all polynomial maps $p: \mathbb{R}^m \to \mathbb{R}^n$, for $m,n \in \mathbb{N}$. More precisely, the categories of $\mathbb{R}-$algebras as defined by the ``Universal Algebra approach'' and by the ``Functorial sense'' provide equivalent categories.

In this vein, one may define a $\mathcal{C}^{\infty}-$ring as a finite product preserving functor from the category ${\cal C}^{\infty}$, whose objects are given by ${\rm Obj}\,(\mathcal{C}^{\infty}) = \{ \mathbb{R}^n | n \in \mathbb{N}\}$ and whose morphisms are given by $\mathcal{C}^{\infty}-$functions between them, ${\rm Mor}\,(\mathcal{C}^{\infty}) = \{ \mathbb{R}^m \stackrel{f}{\rightarrow} \mathbb{R}^n | m,n \in \mathbb{N}, f {\rm smooth}\,\, {\rm function}\}$, \textit{i.e.},\\

$$A: \mathcal{C}^{\infty} \rightarrow \mathcal{C}.$$
\end{remark}



\subsection{Categorial Logic and classifying topoi}

In this subsection we list the main logical-categorial notions and results that we will need in the sequel of this work. The main references here are \cite{ShvLog}, \cite{Borceux3}, \cite{Borceux2} and \cite{MakkaiReyes}.\\

{\bf (I) Sketches and their models:}\\

$\bullet$ A (small) sketch is a 4-tuple $\mathcal{S} = (G, D, P, I)$ (\cite{Borceux2}), where $G$ is a (small) oriented graph; $D$ is a  (set)class of small (non-commutative) diagrams over $G$; $P$ is a (set)class of (non-commutative) cones over $G$; $I$ is a (set)class of (non-commutative) co-cones over $G$.  $\cal S$ is a geometric sketch if $P$ is a set of cones over $G$ with finite basis. Each (small) category $C$ determines a (small) sketch: ${\rm sk}({\cal C}) = (|{\cal C}|, D_{\cal C}, P_{\cal C}, I_{\cal C})$, where $|{\cal C}|$ is the underlying graph of the category, $D_{\cal C}$ is the class of all small commutative over ${\cal C}$, $P_{\cal C}$ is the class of all small limit cones over $\cal C$, $I_{\cal C}$ is the class of all small colimit co-cones over $\cal C$. A sketch $\mathcal{S} = (G, D, P, I)$ is called a (${\cal P}, {\cal I}$)-type if the base of all cones in $P$ are in the class $\cal P$ and   if the base of all co-cones in $I$ are in the class $\cal I$. \\

$\bullet$ A morphism of sketches ${\cal S} \to {\cal S'}$ is a homomorphism of the underlying graphs that preserves all the  given structures. This determines a (very large) category ${\rm SK}$. \\

$\bullet$ A model of a sketch $\cal S$ in a category $\cal C$ is a morphism of sketches ${\cal S} \to {\rm sk}(C)$. We will denote ${\rm Mod}({\cal S},{\cal C})$ the category whose objects are the models of $\cal S$ into the category $\cal C$ and the arrows are the natural transformations between the models (this makes sense since $\cal C$ is a category). Many usual categories of (first-order, but not necessarily finitary) mathematical structures $\cal K$ can be described as  ${\cal K} \simeq {\rm Mod}({\cal S}, {\rm \bf Set}) = {\rm SK}({\cal S},{\rm sk}({\rm \bf Set}))$ for some small sketch $\cal S$;   for instance: groups and their homomorphisms, rings and their homomorphisms, fields and their homomorphisms, local rings and local homomorphisms,  $\sigma$-boolean algebras and their homomorphisms, Banach spaces and linear contractions.\\

$\bullet$ Every small sketch $\cal S$ of  (${\cal P}, {\cal I}$)-type has a ``canonical'' (${\cal P}, {\cal I}$)-model $M : {\cal S} \to {\rm sk}(\hat{\cal S})$, where $\hat{\cal S}$ is a  ${\cal P}$-complete and ${\cal I}$-cocomplete category called ``the (${\cal P}, {\cal I}$)-theory of $\cal S$''. That is, it has all limits of the type occurring. This means that for each category $\cal C$  that is ${\cal P}$-complete and ${\cal I}$-cocomplete  composing with $M$ yields an equivalence of categories ${\rm Func}_{({\cal P}, {\cal I})} (\hat{\cal S}, {\cal C}) \overset{\simeq}\to {\rm Mod}({\cal S}, {\cal C}) = {\rm SK}({\cal S}, {\rm sk}({\cal C}))$, where ${\rm Func}_{({\cal P}, {\cal I})} (\hat{\cal S}, {\cal C})$ is the full subcategory of ${\rm Func}(\hat{\cal S}, {\cal C})$, of all functors that preserves ${\cal P}$-limits and ${\cal I}$-colimits. The (${\cal P}, {\cal I}$)-theory $\hat{\cal S}$ is unique up to ``equivalence of categories''. \\

{\bf (II) Grothendieck Topoi and geometric morphisms:}\\

$\bullet$ A (small) site is a pair $({\cal C}, J)$ formed by a (small) category ${\cal C}$ and a Grothendieck (pre)topology $J$ on $\cal C$, i.e. a map $C \in {\rm Obj}({\cal C}) \mapsto J(C)$ where $f \in J(C)$ is a small family of $\cal C$-arrows ${\cal F}=\{ f_i : A_i \to C\}_{i \in I}$  that  satisfies: the isomorphism axiom; stability axiom and transitivity axiom (\cite{ShvLog}). The usual notion of covering by opens in a topological space $X$ provides a site $({\rm Open}(X), J)$.\\

$\bullet$ Similar to the case of (pre)sheaves over a topological space  it can be defined in general the (pre)sheaves category: ${\rm Sh}({\cal C},J)  \hookrightarrow {\rm \bf Set}^{{\cal C}^{\rm op}}$ and the sheafification (left adjoint) functor $a : {\rm \bf Set}^{{\cal C}^{\rm op}} \to {\rm Sh}({\cal C},J)$ : determines a geometric morphism.\\

$\bullet$ A Grothendieck topos ${\cal E}$ is a category that is equivalent to the category of sheaves over a small site $({\cal C}, J)$,  ${\cal E} \simeq {\rm Sh}({\cal C}, J) \hookrightarrow {\rm \bf Set}^{({\cal C}^{\rm op})}$.\\

$\bullet$ A geometric morphism between the Grothendieck topoi ${\cal E}, {\cal E}'$,  $f: {\cal E} \to {\cal E}'$, is a functor $f^* : {\cal E}' \to {\cal E}$ that preserves small colimits and is left exact (i.e. it preserves finite limits). Equivalently a geometric morphism ${\cal E} \to {\cal E}'$ a is an equivalent class of adjoint functors
$$ {\cal E} \overset{f_*}{\underset{f^*}\rightleftarrows} {\cal E}'$$
where $f^*$ is left exact and left adjoint to $f_*$, and $(f^*, f_*) \equiv (g^*, g_*)$ iff $f^* = g^*$ ( and thus $f_* \cong g_*$). If $({\cal C},J)$ is a small site, the ``sheafification (left adjoint) functor'' $a : {\rm \bf Set}^{{\cal C}^{\rm op}} \to {\rm Sh}({\cal C},J)$  determines a geometric morphism ${\rm Sh}({\cal C},J)  \to {\rm \bf Set}^{{\cal C}^{\rm op}}$.\\

$\bullet$ If ${\cal E},{\cal F}$ are Grothendieck topoi,  we denote ${\rm Geom}({\cal F}, {\cal E}) \hookrightarrow {\rm Func}({\cal E},{\cal F})$ the full subcategory of the category of functors
and natural transformations formed by the (left adjoint part) of geometric morphisms $F \to E$.\\

{\bf (III) (Functorial) Theories:}\\

$\bullet$ A mathematical theory $T$ will be called a {\em functorial} mathematical theory, when there is a small category ${\cal C}_T$ such that the category of models of this theory in a Grothendieck topos $\cal E$, ${\rm Mod}_{\cal E}(T)$ is (naturally) equivalent to a full subcategory of ${\rm Hom}_T({\cal C}_T,{\cal E}) \hookrightarrow {\rm Func}({\cal C}_T,{\cal E})$. This category ${\cal C}_T$ is unique up to equivalence.\\

$\bullet$ Let $\cal C$ be a small  category with finite products and consider the (functorial) theory of finite product preserving  functors on $\cal C$, i.e. ${\cal C}_T = C$ and ${\rm Mod}_T({\cal E}) = {\rm Hom}_T({\cal C}_T, {\cal E}) = {\rm Prod}_{\rm fin}({\cal C}_T,{\cal E}) \hookrightarrow {\rm Func}({\cal C}, {\cal E})$.\\

$\bullet$ Let $\cal C$ be a small left exact category (i.e. $\cal C$ has all finite limits) and consider the (functorial) theory of left exact functors (= finite limits preserving functors) on $\cal C$, i.e. ${\cal C}_T = {\cal C}$ and ${\rm Mod}_T({\cal E}) = {\rm Hom}_T({\cal C}_T, {\cal E}) = {\rm \bf Lex}({\cal C}_T,{\cal E}) \hookrightarrow {\rm Func}({\cal C}, {\cal E})$.\\

$\bullet$ Examples of functorial mathematical theories are given by the theories $\hat{\cal S}$ associated to small sketches ${\cal S} = (G, D, P, I)$ (see (I) above).\\

$\bullet$  To each geometric/coherent first-order theory in the infinitary language  $L_{\infty\omega}$ can be associate a small ``syntactical'' category ${\cal C}_T$  in such a way to determine a functorial theory (\cite{MakkaiReyes}). \\

{\bf (IV) Classifying topoi:}\\

$\bullet$ Let $T$ be a functorial mathematical theory. $T$ admits a classifying topos when  there are (i) a Grothendieck topos ${\cal E}(T)$; (ii) a model $M : {\cal C}_T \to {\cal E}(T)$; that are (2-)universal in the following sense: given a Grothendieck topos ${\cal F}$, composing $M$ with the left adjoint part of the geometric morphism yields an equivalence of categories ${\rm Geom}({\cal F}, {\cal E}[T]) \overset{\simeq}\to {\rm Hom}_T({\cal C}_T,{\cal F})$.
The topos ${\cal E}[T]$ is called {\em the} classifying topos of the theory $T$ and the model $M$ is called {\em the} generic model of the theory $T$.\\

$\bullet$ Each classifying topos of a functorial mathematical theory determines an equivalence of categories ${\rm Geom}({\cal F}, {\cal E}[T]) \simeq {\rm Mod}_F(T)$, for each Grothendieck topos $\cal F$.  When a functorial mathematical theory admits is a classifying topos, it is unique up to equivalence of categories.\\

$\bullet$ Let $\cal C$ be a small left exact category, then  the theory of left exact functors on $\cal C$ admits the presheaves category ${\rm \bf Set}^{{\cal C}^{\rm op}}$ as a classifying topos and the Yoneda embedding $Y_{\cal C} : {\cal C} \to {\rm \bf Set}^{{\cal C}^{\rm op}}$ is the generic model.\\

$\bullet$ If $({\cal C},J)$ is a small site over a left exact category $\cal C$, then the theory of left-exact (i.e. finite limit preserving) continuous (i.e. takes covering into colimits) functors is classified by the topos ${\rm Sh}({\cal C},J)$, where the canonical model is ${\cal C} \overset{Y}\to {\rm \bf Set}^{{\cal C}^{op}} \overset{a}\to {\rm Sh}({\cal C},J)$. This includes the previous case of presheaves  categories, by taking the Grothendieck topology $J(c) = \{{\rm id}_{\cal C}   : {\cal C} \to {\cal C}\}, c \in {\rm Obj}({\cal C})$.\\

$\bullet$ If the small category ${\cal C}_T$ that encodes a mathematical theory $T$ is freely generated by an object $u$, then the generic model  $M : {\cal C}_T \to  {\rm Sh}({\cal C}_T,J)$ is uniquely determined (up to natural isomorphism) by $M(u) = a({\cal C}_T(-,u))$. In this case, ${\rm ev}_u: {\rm Hom}_{T}({\cal C}_T, {\cal E}) \stackrel{\simeq}{\rightarrow} {\rm Mod}_{{\cal E}}(T)$ is an equivalence of categories for each Grothendieck topos ${\cal E}$. Such object $u$ is called the ``universal'' $T$-object in ${\rm Sh}({\cal C}_T,J)$.\\

$\bullet$ The Mitchell-B\'enabou language of a elementary/Grothendieck topos and the Kripke-Joyal semantics allows us to interpret --in particular--  first-order formulas in many sorted languages $L_{\omega \omega}/ L_{\infty \omega}$ in a  elementary/Grothendieck topos. Every geometric theory admits a classifying topos.\\

$\bullet$ Every Grothendieck topos is the classifying topos of a small geometric sketch.

\section{A Classifying Topos for the Theory of  $C^{\infty}-$rings}

In this section we describe a classifying topos for the theory of $\mathcal{C}^{\infty}-$rings. We mimic the construction of a classifying topos for the theory of commutative unital rings, given by I. Moerdijk and S. Mac Lane in \cite{ShvLog}, making some necessary adaptations to the context of $\mathcal{C}^{\infty}-$rings.\\

\subsection{$\mathcal{C}^{\infty}-$Ring Objects in Categories with Finite Products}

\begin{definition}\label{Gabrielle}Let $\mathcal{C}$ be a category with finite products. A \textbf{$\mathcal{C}^{\infty}-$ring object} in $\mathcal{C}$ is a morphism of sketches $A: \mathcal{S}_{\mathcal{C}^{\infty}{\rm \bf Rng}} \to {\rm sk}(\mathcal{C})$, where $\mathcal{S}_{\mathcal{C}^{\infty}{\rm \bf Rng}}$ is the sketch of the theory of $\mathcal{C}^{\infty}-$rings.
\end{definition}

\begin{proposition}\label{Maricruz}Given a $\mathcal{C}^{\infty}-$ring-object $A :\mathcal{S}_{\mathcal{C}^{\infty}-{\rm \bf Rng}} \to {\rm sk}(\mathcal{C})$ in $\mathcal{C}$,in the sense of the \textbf{Definition \ref{Gabrielle}}, the object $A(|\mathbb{R}|) \in {\rm Obj}\,(\mathcal{C})$ has an obvious $\mathcal{C}^{\infty}-$ring structure, $\Psi$, given by:

$$\begin{array}{cccc}
    \Psi: & \bigcup_{n \in \mathbb{N}}\mathcal{C}^{\infty}(\mathbb{R}^n, \mathbb{R}) & \rightarrow & \bigcup_{n \in \mathbb{N}} {\rm Hom}_{\mathcal{C}}(A(|\mathbb{R}|)^{n},A(|\mathbb{R}|)) \\
     & f & \mapsto & A(|f|): A(|\mathbb{R}|)^n \to A(|\mathbb{R}|)
  \end{array}$$

Thus, we have the (universal-algebraic) $\mathcal{C}^{\infty}-$ring $(A(|\mathbb{R}|), \Psi)$.
\end{proposition}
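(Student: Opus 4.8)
The plan is to unwind the definition of a morphism of sketches and to transport the three pieces of structure carried by $\mathcal{S}_{\mathcal{C}^{\infty}{\rm \bf Rng}}$ — its product cones, its projection diagrams, and its composition diagrams — through the model $A$ into $\mathcal{C}$. First I would recall the concrete shape of the sketch: its underlying graph has one vertex $|\mathbb{R}^n|$ for each $n \in \mathbb{N}$ and one edge $|f| : |\mathbb{R}^n| \to |\mathbb{R}|$ for each $f \in \mathcal{C}^{\infty}(\mathbb{R}^n, \mathbb{R})$; the class $P$ of cones declares each $|\mathbb{R}^n|$ to be the $n$-fold product of $|\mathbb{R}|$ together with its distinguished projection edges; and the class $D$ of diagrams records exactly the two families of equations of Definition \ref{CravoeCanela}, namely that each projection symbol $|p_k|$ is the $k$-th product leg, and that whenever $f = h \circ (g_1,\dots,g_n)$ the corresponding diagram commutes.

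Next, because $A$ is a morphism of sketches it sends every cone in $P$ to a limit cone in ${\rm sk}(\mathcal{C})$; applied to the product cone over $|\mathbb{R}^n|$ this yields a canonical isomorphism $A(|\mathbb{R}^n|) \cong A(|\mathbb{R}|)^n$ in $\mathcal{C}$, coherent with the distinguished projection legs. Under this identification each edge $|f|$ is carried to a $\mathcal{C}$-morphism $A(|f|) : A(|\mathbb{R}|)^n \to A(|\mathbb{R}|)$, and I simply set $\Psi(f) := A(|f|)$, which is precisely the assignment in the statement.

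It then remains to check that $(A(|\mathbb{R}|),\Psi)$ validates the two bullet points of Definition \ref{CravoeCanela}. For the projection axiom, the diagram in $D$ forcing $|p_k|$ to equal the $k$-th leg is preserved by $A$, so $\Psi(p_k)$ is the categorical projection $\pi_k : A(|\mathbb{R}|)^n \to A(|\mathbb{R}|)$, as required. For the composition axiom, I would take $g_1,\dots,g_n \in \mathcal{C}^{\infty}(\mathbb{R}^m,\mathbb{R})$ and $h \in \mathcal{C}^{\infty}(\mathbb{R}^n,\mathbb{R})$ with $f = h \circ (g_1,\dots,g_n)$, observe that the corresponding commuting diagram belongs to $D$, and use that $A$ preserves commutative diagrams to obtain $\Psi(f) = \Psi(h) \circ \langle \Psi(g_1),\dots,\Psi(g_n)\rangle$, where the tupling $A(|\mathbb{R}|)^m \to A(|\mathbb{R}|)^n$ is formed using the product identifications of the previous paragraph. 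Putting these together shows that $(A(|\mathbb{R}|),\Psi)$ is a $\mathcal{C}^{\infty}$-ring in the universal-algebraic sense.

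I expect the only genuine point requiring care — rather than a real obstacle — to be the coherence of the isomorphisms $A(|\mathbb{R}^n|) \cong A(|\mathbb{R}|)^n$: one must ensure that the product identifications used to form the tuple $\langle \Psi(g_1),\dots,\Psi(g_n)\rangle$ are the same ones through which the composite symbol $f$ is read off, so that preservation of the composition diagrams really delivers the stated equation on the nose and not merely up to some reindexing isomorphism. Everything else is a direct transport of structure along the sketch morphism.
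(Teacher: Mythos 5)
Your proposal is correct and follows essentially the same route as the paper's proof: transport the product cones to get $A(|\mathbb{R}^n|) \cong A(|\mathbb{R}|)^n$, use preservation of the commutative diagrams in $D$ for both axioms, and resolve the one delicate point — that $A(\langle|g_1|,\dots,|g_n|\rangle) = \langle A(|g_1|),\dots,A(|g_n|)\rangle$ — by composing with the preserved projections, which is exactly the ``Claim'' the paper isolates and proves the same way. No gaps.
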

\begin{proof}
It suffices to prove that $\Psi$ satisfies the two groups of axioms given in \textbf{Definition \ref{CravoeCanela}}.\\

$\Psi$ preserves projections, since $A$, as a $\mathcal{C}^{\infty}-$ring object, maps the projective cones given in $\mathcal{P}$ to limit cones in $\mathcal{C}$ - that is, to products. Given $n, m_1, \cdots, m_k \in \mathbb{N}$ such that $n = m_1 + \cdots, m_k$ and the projections $p^{n}_{m_i}: \mathbb{R}^n \to \mathbb{R}^{m_i}, i=1, \cdots, k$, $\Psi(p^{n}_{m_i}):= A(|p^{n}_{m_i}|): A(|\mathbb{R}|)^n \to A(|\mathbb{R}|)^{m_i}$, which must be the projections since $A$ maps the cone $(p^{n}_{m_i}: \mathbb{R}^n \to \mathbb{R}^{m_i})_{i=1, \cdots, k}$ to a product in $\mathcal{C}$.\\

Also, for every $n \in \mathbb{N}$ and every $(n+2)-$tuple of $\mathcal{C}^{\infty}-$functions, $(h,g_1, \cdots, g_n, f)$
with $f\in \mathcal{C}^{\infty}(\mathbb{R}^n)$, $g_1, \cdots, g_n \in \mathcal{C}^{\infty}(\mathbb{R}^k)$ with:
$$h = f \circ (g_1, \cdots, g_n)$$
we have:

$$\Psi(f \circ (g_1, \cdots, g_n)) = A(|f| \circ (|g_1|, \cdots, |g_n|)) = A(|f|) \circ A((|g_1|, \cdots, |g_n|)),$$
since $A$, as a $\mathcal{C}^{\infty}-$ring, maps the diagram:

$$\xymatrixcolsep{5pc}\xymatrix{
|\mathbb{R}^k| \ar[r]^{(|g_1|, \cdots, |g_n|)} \ar[dr]_{|h|} & |\mathbb{R}^n| \ar[d]^{|f|}\\
 & |\mathbb{R}|
}$$

(that belongs to $\mathcal{D}$ since $h = f \circ (g_1, \cdots,g_n)$) to a commutative one:

$$\xymatrixcolsep{5pc}\xymatrix{
A(|\mathbb{R}^k|) \ar[dr]_{A(|h|)} \ar[r]^{A((|g_1|, \cdots, |g_n|))} & A(|\mathbb{R}^n|)\ar[d]^{A(|f|)} \\
 & A(|\mathbb{R}|)}$$

that is $A(|h|) = A(|f|) \circ A((|g_1|, \cdots, |g_n|))$.\\

\textbf{Claim:} $A((|g_1|, \cdots, |g_n|)) = (A(|g_1|), \cdots, A(|g_n|))$.\\

Indeed, for every $i \in \{ 1, \cdots, k\}$ the following diagram commutes:

$$\xymatrixcolsep{5pc}\xymatrix{
A(|\mathbb{R}^k|) \ar[dr]_{A(|g_i|)} \ar[r]^{A((|g_1|, \cdots, |g_n|))} & A(|\mathbb{R}^n|) \ar[d]^{A(|p^{n}_{i}|)}\\
  & A(|\mathbb{R}|)
}$$

and since $A$ interprets each $p^{n}_{i}$, $i=1, \cdots,k$, as a projection, $A(|p^{n}_{i}|)$, it follows that:

$$A((|g_1|, \cdots, |g_n|)) = (A(|g_1|), \cdots, A(|g_n|)).$$

Thus

\begin{multline*}
\Psi(h) := A(|h|) = A(|f|\circ (|g_1|, \cdots, |g_n|)) = A(|f|) \circ A((|g_1|, \cdots, |g_n|))=\\
=A(|f|)\circ (A(|g_1|), \cdots, A(|g_n|)) = \Psi(f)\circ (\Psi(g_1), \cdots, \Psi(g_n))
\end{multline*}

and $\Psi$ is a $\mathcal{C}^{\infty}-$ring structure.
\end{proof}

\begin{remark}Let $\mathcal{C}$ be a category with all finite limits. The category $\underline{\mathcal{C}^{\infty}-{\rm Ring}}\,(\mathcal{C})$ is not a subcategory of $\mathcal{C}$ (cf. p. 101 of \cite{Schubert}). However, there is a forgetful functor $U: \underline{\mathcal{C}^{\infty}-{\rm Ring}}\,(\mathcal{C}) \to \mathcal{C}$ which is faithfull and reflects isomorphisms (cf. \textbf{Proposition 11.3.3} of \cite{Schubert}). It follows that $U$ reflects all the limits and colimits that it preserves and which exist in $\underline{\mathcal{C}^{\infty}-{\rm Ring}}\,(\mathcal{C})$.\\
 \end{remark}

 The following proposition gives us some properties of the category $\underline{\mathcal{C}^{\infty}-{\rm Ring}}\,(\mathcal{C})$ which are inherited from $\mathcal{C}$,\\

\begin{proposition}\label{Schubert}If a category $\mathcal{C}$ is finitely complete, then the same is true for the category $\mathcal{C}^{\infty}-{\rm Ring}\,(\mathcal{C})$.
 \end{proposition}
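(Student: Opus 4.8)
The plan is to exploit the functorial description recalled in the remark preceding Remark~7, under which a $\mathcal{C}^{\infty}-$ring object of $\mathcal{C}$ is the same thing as a finite-product-preserving functor $A \colon \mathcal{C}^{\infty} \to \mathcal{C}$ and a morphism of such objects is a natural transformation. In these terms $\underline{\mathcal{C}^{\infty}-{\rm Ring}}\,(\mathcal{C})$ is identified with the \emph{full} subcategory of the functor category ${\rm Func}(\mathcal{C}^{\infty}, \mathcal{C})$ spanned by the product-preserving functors. I would then reduce finite completeness of this subcategory to two standard facts: that functor categories compute limits objectwise, and that finite limits commute with finite products.

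First I would recall that, since $\mathcal{C}$ is finitely complete, the functor category ${\rm Func}(\mathcal{C}^{\infty}, \mathcal{C})$ is finitely complete as well, with finite limits formed pointwise: given a finite diagram $D \colon J \to {\rm Func}(\mathcal{C}^{\infty}, \mathcal{C})$, the functor $L$ defined on objects by $L(X) = \varprojlim_{j \in J} D(j)(X)$ (the limit taken in $\mathcal{C}$, which exists) is the limit of $D$, its limiting cone being given objectwise. The entire content of the proposition is thus concentrated in the claim that when each $D(j)$ preserves finite products, so does $L$.

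This is where the main work lies, and it is an instance of the interchange of limits (``Fubini for limits''). For the generating product $\mathbb{R}^n \cong \mathbb{R} \times \cdots \times \mathbb{R}$ in $\mathcal{C}^{\infty}$ one computes
\[
L(\mathbb{R}^n) = \varprojlim_{j \in J} D(j)(\mathbb{R}^n) \cong \varprojlim_{j \in J} \bigl( D(j)(\mathbb{R}) \bigr)^{n} \cong \Bigl( \varprojlim_{j \in J} D(j)(\mathbb{R}) \Bigr)^{n} = L(\mathbb{R})^{n},
\]
where the first isomorphism uses that each $D(j)$ preserves the $n$-fold product, and the second is the commutation of the finite limit over $J$ with the $n$-fold product (both being limits, over $J$ and over the discrete category on $n$ objects, so that their iterated formation may be exchanged). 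One must also check that these isomorphisms are compatible with the projection cone of $L$, so that $L$ genuinely carries the product cones of the sketch $\mathcal{S}_{\mathcal{C}^{\infty}{\rm \bf Rng}}$ to product cones in $\mathcal{C}$; this is routine naturality bookkeeping. Hence $L$ is again a $\mathcal{C}^{\infty}-$ring object.

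Finally, because $\underline{\mathcal{C}^{\infty}-{\rm Ring}}\,(\mathcal{C})$ is a full subcategory of ${\rm Func}(\mathcal{C}^{\infty}, \mathcal{C})$ and the limit $L$ computed in the ambient functor category already lies in it, $L$ together with its objectwise projections is automatically the limit of $D$ inside $\underline{\mathcal{C}^{\infty}-{\rm Ring}}\,(\mathcal{C})$. As $J$ ranges over all finite index categories this produces every finite limit, establishing finite completeness. Equivalently, in the language of the preceding remark, the forgetful functor $U$ (evaluation at $|\mathbb{R}|$) preserves these finite limits, since $U(L) = \varprojlim_{j} U(D(j))$, and by that remark it therefore reflects them. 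The only genuine obstacle is the product-preservation step displayed above; everything else is the standard transfer of completeness to functor categories.
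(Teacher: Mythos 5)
Your argument is correct. The paper itself does not spell out a proof: it simply invokes Proposition 11.5.1 of Schubert's \emph{Categories}, the general statement that the category of algebras (models of a finite-product sketch, equivalently of an algebraic theory) in a finitely complete category is again finitely complete, with limits computed through the forgetful functor. What you have written is essentially the standard argument underlying that citation: since $\mathcal{C}$ is finitely complete, ${\rm Func}(\mathcal{C}^{\infty},\mathcal{C})$ has finite limits computed pointwise, and a pointwise finite limit of finite-product-preserving functors is again finite-product-preserving because limits commute with limits. The step you flag as ``routine naturality bookkeeping'' --- checking that the canonical comparison map $L(\mathbb{R}^n)\to L(\mathbb{R})^n$ induced by the projection cone coincides with the isomorphism obtained from the interchange of limits, so that $L$ really sends the distinguished product cones of the sketch to limit cones --- is indeed the only point requiring care, and it does go through; note also that the equational (commutative-diagram) part of the sketch is preserved automatically, since you work with the functorial description in which composition identities are built into functoriality. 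In short, your proof is a correct, self-contained replacement for the paper's external citation, trading brevity for self-containedness.
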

\begin{proof}
It is an immediate application of \textbf{Proposition 11.5.1} of page 103 of \cite{Schubert}.
\end{proof}

\begin{proposition}Let $\mathcal{C}$ be a category with all finite limits. Every left-exact functor $F: \mathcal{C} \to \mathcal{C}'$ induces a functor:
$$T_{\underline{\mathcal{C}^{\infty}-{\rm Ring}}}: \underline{\mathcal{C}^{\infty}-{\rm Ring}}\,(\mathcal{C}) \to \underline{\mathcal{C}^{\infty}-{\rm Ring}}\,(\mathcal{C}')$$
\end{proposition}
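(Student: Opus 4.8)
The plan is to define the induced functor by post-composition with $F$, both on objects and on morphisms, and to show that the hypothesis that $F$ be left exact is precisely what is needed to keep us inside the category of $\mathcal{C}^{\infty}-$ring objects. First I would recall the nature of the sketch $\mathcal{S}_{\mathcal{C}^{\infty}{\rm \bf Rng}}$: since the theory of $\mathcal{C}^{\infty}-$rings is equational (a variety of algebras), this is a finite-product sketch. Its distinguished cones in $\mathcal{P}$ all have finite base and encode the products $|\mathbb{R}^n| \cong |\mathbb{R}|^{\times n}$; its distinguished diagrams in $\mathcal{D}$ encode the compositional relations $|h| = |f| \circ (|g_1|, \cdots, |g_n|)$ among symbols of smooth functions; and it carries no nontrivial co-cones (there is no colimit data to interpret). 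Hence, as in the proof of \textbf{Proposition \ref{Maricruz}}, a $\mathcal{C}^{\infty}-$ring object $A : \mathcal{S}_{\mathcal{C}^{\infty}{\rm \bf Rng}} \to {\rm sk}(\mathcal{C})$ is exactly a graph homomorphism that sends the cones of $\mathcal{P}$ to finite products in $\mathcal{C}$ and the diagrams of $\mathcal{D}$ to commutative diagrams in $\mathcal{C}$.

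On objects I would set $T_{\underline{\mathcal{C}^{\infty}-{\rm Ring}}}(A) := F \circ A$ and verify that this composite is again a morphism of sketches $\mathcal{S}_{\mathcal{C}^{\infty}{\rm \bf Rng}} \to {\rm sk}(\mathcal{C}')$. It is a graph homomorphism, being a composite of such. Because $F$ is a functor it preserves composition and identities, so it carries every commutative diagram in $\mathcal{C}$ — in particular each $A$-image of a diagram in $\mathcal{D}$ — to a commutative diagram in $\mathcal{C}'$; thus $F \circ A$ respects $\mathcal{D}$. Because $F$ is left exact it preserves finite limits, in particular finite products, so it carries each finite product cone $A(\text{cone})$ in $\mathcal{C}$ to a finite product cone in $\mathcal{C}'$; thus $F \circ A$ respects $\mathcal{P}$. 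Since there are no co-cones to preserve, left-exactness of $F$ is exactly the amount of structure-preservation required, and $F \circ A$ is a $\mathcal{C}^{\infty}-$ring object in $\mathcal{C}'$.

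On morphisms, a map $A \to B$ in $\underline{\mathcal{C}^{\infty}-{\rm Ring}}\,(\mathcal{C})$ is a natural transformation $\eta : A \Rightarrow B$, and I would define $T_{\underline{\mathcal{C}^{\infty}-{\rm Ring}}}(\eta)$ to be the whiskering $F * \eta$ with components $(F * \eta)_X = F(\eta_X)$. Naturality of $F * \eta$ follows by applying $F$ to each naturality square of $\eta$ and invoking functoriality of $F$. Functoriality of $T_{\underline{\mathcal{C}^{\infty}-{\rm Ring}}}$ itself, namely $T_{\underline{\mathcal{C}^{\infty}-{\rm Ring}}}({\rm id}_A) = {\rm id}_{F \circ A}$ and $T_{\underline{\mathcal{C}^{\infty}-{\rm Ring}}}(\eta' \circ \eta) = T_{\underline{\mathcal{C}^{\infty}-{\rm Ring}}}(\eta') \circ T_{\underline{\mathcal{C}^{\infty}-{\rm Ring}}}(\eta)$, is then immediate from $F$ preserving identities and composition.

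The only point deserving care — and the main, albeit mild, obstacle — is to resist claiming more than is true: a general left-exact $F$ does \emph{not} extend to a morphism of the full sketches ${\rm sk}(\mathcal{C}) \to {\rm sk}(\mathcal{C}')$, since it need not preserve arbitrary limits, and typically preserves no colimits of $\mathcal{C}$ at all. What is actually used is only that $F$ preserves the few kinds of structure occurring in the image of $A$, namely finite products and commuting diagrams, and this is guaranteed by functoriality together with left-exactness precisely because the theory of $\mathcal{C}^{\infty}-$rings is purely equational and $\mathcal{S}_{\mathcal{C}^{\infty}{\rm \bf Rng}}$ is a finite-product sketch with no colimit data. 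I would therefore phrase the argument in terms of the composite $F \circ A$ being a model, rather than in terms of $F$ inducing a map of ambient sketches.
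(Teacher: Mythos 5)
Your proposal is correct and takes the same route as the paper: the induced functor is post-composition with $F$, and one checks that the composite $F\circ A$ is again a model of the sketch. In fact your write-up is more complete than the paper's one-line argument, which only observes that $F$ preserves commutative diagrams (the $\mathcal{D}$-data) and leaves implicit the point you rightly single out — that left-exactness is what guarantees the finite-product cones in $\mathcal{P}$ are sent to product cones, and that no stronger limit- or colimit-preservation is needed because the sketch carries no colimit data.
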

\begin{proof}
Since every functor preserves commutative diagrams, it follows that $F$ maps commutative diagrams of $\mathcal{C}$ to commutative diagrams of $\mathcal{C}'$, so the $\mathcal{C}^{\infty}-$ring-objects of $\mathcal{C}$ are mapped to $\mathcal{C}^{\infty}-$ring-objects of $\mathcal{C}'$.
\end{proof}

\begin{proposition}The object $\mathcal{C}^{\infty}(\mathbb{R})$ of $\mathcal{C}^{\infty}{\rm \bf Rng}_{{\rm fp}}$ is a $\mathcal{C}^{\infty}-$\textbf{ring-object} in
$$\mathcal{C}^{\infty}{\rm \bf Rng}_{{\rm fp}},$$
\end{proposition}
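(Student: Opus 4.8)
The plan is to produce explicitly the morphism of sketches $A \colon \mathcal{S}_{\mathcal{C}^{\infty}{\rm \bf Rng}} \to {\rm sk}(\mathcal{C}^{\infty}{\rm \bf Rng}_{\rm fp}^{\rm op})$ demanded by Definition \ref{Gabrielle}, taking for $A$ the (restriction of the) contravariant functor $\mathcal{C}^{\infty}(-)$ that sends each Euclidean space $\mathbb{R}^n$ to $\mathcal{C}^{\infty}(\mathbb{R}^n)$ and each smooth map $f \colon \mathbb{R}^m \to \mathbb{R}^n$ to the comorphism $f^{*} \colon \mathcal{C}^{\infty}(\mathbb{R}^n) \to \mathcal{C}^{\infty}(\mathbb{R}^m)$, $g \mapsto g \circ f$. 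Since interpreting the $n$-ary operation symbols requires the finite powers of the underlying object, and the arrows $f^{*}$ naturally go from $\mathcal{C}^{\infty}(\mathbb{R})$ \emph{into} an $n$-fold coproduct rather than \emph{out of} an $n$-fold product, the relevant finite products are those of $\mathcal{C}^{\infty}{\rm \bf Rng}_{\rm fp}^{\rm op}$ --- that is, the finite coproducts ($\otimes_{\infty}$) of $\mathcal{C}^{\infty}{\rm \bf Rng}_{\rm fp}$ whose existence was recorded in the remarks above.

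The crucial structural input is the identification $\mathcal{C}^{\infty}(\mathbb{R})^{n} \cong \mathcal{C}^{\infty}(\mathbb{R}^{n})$, i.e. that $\mathcal{C}^{\infty}(\mathbb{R}^n)$ is the $n$-fold coproduct of $\mathcal{C}^{\infty}(\mathbb{R})$, with coprojections $p_i^{*} \colon \mathcal{C}^{\infty}(\mathbb{R}) \to \mathcal{C}^{\infty}(\mathbb{R}^n)$ induced by the projections $p_i \colon \mathbb{R}^n \to \mathbb{R}$ (so the generator of the $i$-th copy is sent to the coordinate $x_i$). I would obtain this directly from the freeness of $\mathcal{C}^{\infty}(\mathbb{R}^n)$ on $n$ generators recorded in the preliminaries: for every $\mathcal{C}^{\infty}$-ring $B$ one has natural bijections ${\rm Hom}(\mathcal{C}^{\infty}(\mathbb{R}^n), B) \cong B^{n} \cong \prod_{i=1}^{n} {\rm Hom}(\mathcal{C}^{\infty}(\mathbb{R}), B)$, whose composite carries a homomorphism $\varphi$ to $(\varphi \circ p_1^{*}, \dots, \varphi \circ p_n^{*})$. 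This is exactly the universal property exhibiting the $p_i^{*}$ as coproduct coprojections, so that $A$ sends the product cone $(\mathbb{R}^n \stackrel{p_i}{\to} \mathbb{R})_{i}$ of the sketch to a genuine product cone of $\mathcal{C}^{\infty}{\rm \bf Rng}_{\rm fp}^{\rm op}$.

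It then remains to verify that $A$ respects the distinguished diagrams of $\mathcal{S}_{\mathcal{C}^{\infty}{\rm \bf Rng}}$ --- the projection equations and the composition relations $h = f \circ (g_1, \dots, g_n)$ of Definition \ref{CravoeCanela}. Both are immediate from the functoriality of $\mathcal{C}^{\infty}(-)$, which sends a commuting triangle of smooth maps to the corresponding commuting triangle of comorphisms; together with the preceding paragraph this shows that every cone of the sketch goes to a product cone and every diagram to a commutative one, so $A$ is indeed a morphism of sketches. The $\mathcal{C}^{\infty}$-operation attached to $f \in \mathcal{C}^{\infty}(\mathbb{R}^n)$ is then the arrow $f^{*}$, read in $\mathcal{C}^{\infty}{\rm \bf Rng}_{\rm fp}^{\rm op}$ as an $n$-ary operation $\mathcal{C}^{\infty}(\mathbb{R})^{n} \to \mathcal{C}^{\infty}(\mathbb{R})$.

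The only genuine obstacle is the coproduct identification of the second paragraph: one must check carefully that the coprojections into $\mathcal{C}^{\infty}(\mathbb{R}^n)$ are precisely the maps $p_i^{*}$ and that the resulting bijection is natural in $B$. Everything else is bookkeeping --- in particular, keeping strict track of variance, since the operations $\mathcal{C}^{\infty}(\mathbb{R})^{n} \to \mathcal{C}^{\infty}(\mathbb{R})$ live as arrows of $\mathcal{C}^{\infty}{\rm \bf Rng}_{\rm fp}^{\rm op}$ and not of $\mathcal{C}^{\infty}{\rm \bf Rng}_{\rm fp}$ itself.
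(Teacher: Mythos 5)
Your proposal is correct and follows essentially the same route as the paper: the paper's proof consists precisely of assigning to each $f \in \mathcal{C}^{\infty}(\mathbb{R}^n,\mathbb{R})$ the unique $\mathcal{C}^{\infty}$-homomorphism $\mathcal{C}^{\infty}(\mathbb{R}) \to \mathcal{C}^{\infty}(\mathbb{R}^n)$ sending ${\rm id}_{\mathbb{R}}$ to $f$ (your $f^{*}$), read as an $n$-ary operation in $\mathcal{C}^{\infty}{\rm \bf Rng}_{\rm fp}^{\rm op}$ via the coproduct identification $\mathcal{C}^{\infty}(\mathbb{R}^n) \cong \mathcal{C}^{\infty}(\mathbb{R})\otimes_{\infty}\cdots\otimes_{\infty}\mathcal{C}^{\infty}(\mathbb{R})$. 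You merely spell out the variance bookkeeping and the verification of the sketch conditions that the paper leaves implicit.
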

\begin{proof}Given any $f \in \mathcal{C}^{\infty}(\mathbb{R}^n, \mathbb{R}) \subseteq \bigcup_{n \geq 0} \mathcal{C}^{\infty}(\mathbb{R}^n,\mathbb{R})$ we define $\widehat{f}$ as the unique \\ $\mathcal{C}^{\infty}-$homomorphism sending the identity function ${\rm id}_{\mathbb{R}}: \mathbb{R} \to \mathbb{R}$ to $f$, that is:
$$\begin{array}{cccc}
    \widehat{f} = f \circ - : & \mathcal{C}^{\infty}(\mathbb{R}) & \rightarrow & \mathcal{C}^{\infty}(\mathbb{R}^n) \\
     & g & \mapsto & f \circ g
  \end{array}$$
\end{proof}

\begin{theorem}The category $$\mathcal{C}^{\infty}{\rm \bf Rng}_{{\rm fp}}^{{\rm op}}$$ is a category with finite limits freely generated by the $\mathcal{C}^{\infty}-$ring-object $\mathcal{C}^{\infty}(\mathbb{R})$.
\end{theorem}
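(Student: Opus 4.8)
The plan is to unwind the meaning of ``freely generated by the $\mathcal{C}^\infty$-ring-object $\mathcal{C}^{\infty}(\mathbb{R})$'' in the sense of item (IV) of the Preliminaries and then verify the corresponding universal property. Writing $\mathcal{D} := \mathcal{C}^{\infty}{\rm \bf Rng}_{\rm fp}^{\rm op}$, the earlier Remark already gives that $\mathcal{D}$ is finitely complete, so what remains is to exhibit a distinguished $\mathcal{C}^\infty$-ring-object in $\mathcal{D}$ and to prove that, for every finitely complete $\mathcal{C}$, evaluation at it is an equivalence
$${\rm ev}_{\mathcal{C}^{\infty}(\mathbb{R})} \colon {\rm \bf Lex}(\mathcal{D}, \mathcal{C}) \xrightarrow{\ \simeq\ } \underline{\mathcal{C}^{\infty}-{\rm Ring}}(\mathcal{C}).$$
The distinguished object is supplied by the previous Proposition: since binary products in $\mathcal{D}$ are binary coproducts in $\mathcal{C}^{\infty}{\rm \bf Rng}_{\rm fp}$, one has $\mathcal{C}^{\infty}(\mathbb{R})^{n} \cong \mathcal{C}^{\infty}(\mathbb{R}^{n})$ in $\mathcal{D}$, and the maps $\widehat{f} = f \circ -$ interpret each symbol $f \in \mathcal{C}^{\infty}(\mathbb{R}^n,\mathbb{R})$ as an arrow $\mathcal{C}^{\infty}(\mathbb{R})^{n} \to \mathcal{C}^{\infty}(\mathbb{R})$ of $\mathcal{D}$. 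Equivalently, recalling from the Remark that a $\mathcal{C}^\infty$-ring-object in $\mathcal{C}$ is the same as a finite-product-preserving functor $\mathcal{C}^{\infty} \to \mathcal{C}$, this data is a fully faithful, product-preserving embedding $\iota \colon \mathcal{C}^{\infty} \hookrightarrow \mathcal{D}$, where $\iota(\mathbb{R}^n) = \mathcal{C}^{\infty}(\mathbb{R}^n)$ and $\iota$ sends $g\colon \mathbb{R}^m \to \mathbb{R}^n$ to the arrow of $\mathcal{D}$ given by the homomorphism $-\circ g\colon \mathcal{C}^{\infty}(\mathbb{R}^n)\to \mathcal{C}^{\infty}(\mathbb{R}^m)$. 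The content of the theorem is then that $\iota$ is the universal (generic) model.

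The key structural step is a presentation lemma: every finitely presented $\mathcal{C}^\infty$-ring $\mathcal{C}^{\infty}(\mathbb{R}^n)/\langle f_1, \dots, f_k \rangle$ is the coequalizer in $\mathcal{C}^{\infty}{\rm \bf Rng}_{\rm fp}$ of the parallel pair $\mathcal{C}^{\infty}(\mathbb{R}^k) \rightrightarrows \mathcal{C}^{\infty}(\mathbb{R}^n)$ induced, respectively, by the smooth map $(f_1,\dots,f_k)\colon \mathbb{R}^n \to \mathbb{R}^k$ and by the zero map $0 \colon \mathbb{R}^n \to \mathbb{R}^k$. Here I would use that congruences are classified by ideals (Remark) together with Hadamard's Lemma, which shows that $\{\,g(f_1,\dots,f_k)-g(0) : g \in \mathcal{C}^{\infty}(\mathbb{R}^k)\,\}$ generates exactly the ideal $\langle f_1,\dots,f_k\rangle$. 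Dualizing, every object of $\mathcal{D}$ is an equalizer of a parallel pair of arrows between objects in the image of $\iota$; in particular the image of $\iota$ generates $\mathcal{D}$ under finite limits.

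With this in hand I would build a quasi-inverse to ${\rm ev}_{\mathcal{C}^{\infty}(\mathbb{R})}$. Given a $\mathcal{C}^\infty$-ring-object $A$ in $\mathcal{C}$, presented as a product-preserving functor $\bar{A}\colon \mathcal{C}^{\infty} \to \mathcal{C}$, define $\widetilde{A}\colon \mathcal{D} \to \mathcal{C}$ on an object with presentation as above to be the equalizer in $\mathcal{C}$ of the pair $\bar{A}(f),\, \bar{A}(0) \colon A^{n} \rightrightarrows A^{k}$ (which exists since $\mathcal{C}$ is finitely complete), extending to arrows by the universal property of equalizers. One then checks that $\widetilde{A}$ is independent of the chosen presentation, is functorial, is left exact, restricts along $\iota$ to $\bar{A}$, and that $A \mapsto \widetilde{A}$ and $F \mapsto F\circ \iota$ are mutually inverse up to natural isomorphism. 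Conceptually this is the statement that $\mathcal{D}$ is the finite-limit theory associated to the finite-product (algebraic) theory $\mathcal{C}^{\infty}$, i.e. the Gabriel--Ulmer/Lawvere-theory fact that $(\mathbf{Mod}(\mathbb{T})_{\rm fp})^{\rm op}$ classifies $\mathbb{T}$-models; one may either cite this or run the construction directly on the explicit coequalizer presentations.

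The main obstacle I expect is the well-definedness and functoriality of $\widetilde{A}$: a single object of $\mathcal{D}$ admits many presentations, and an arrow between two presented objects lifts to the level of free objects only modulo the defining relations, so one must verify that the induced maps between equalizers are independent of all these choices and compatible with composition. This amounts to the uniqueness half of the universal property --- that a left-exact functor on $\mathcal{D}$ is determined, up to isomorphism, by its restriction to $\iota$ --- and is handled by a diagram chase exploiting that $\bar{A}$ preserves precisely the finite products out of which the defining equalizers are built. A secondary point to settle is that the elementary notion of ``finitely presented $\mathcal{C}^\infty$-ring'' used here agrees with the categorical finitely-presentable objects of $\mathcal{C}^{\infty}{\rm \bf Rng}$, which is what legitimizes invoking the general completion theorem; working directly with the coequalizer presentations side-steps this issue.
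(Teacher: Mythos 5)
Your proposal is correct and follows essentially the same route as the paper's proof: both reduce the statement to showing that ${\rm ev}_{\mathcal{C}^{\infty}(\mathbb{R})}$ is an equivalence, and both construct the quasi-inverse by presenting each finitely presented $\mathcal{C}^{\infty}$-ring as the coequalizer of the pair $p, 0 \colon \mathcal{C}^{\infty}(\mathbb{R}^k) \rightrightarrows \mathcal{C}^{\infty}(\mathbb{R}^n)$ and sending it to the equalizer of the interpreted pair $p^{(R)}, 0^{(R)}\colon R^n \rightrightarrows R^k$, then verifying left exactness and independence of the presentation. Your explicit appeal to Hadamard's Lemma to identify the coequalizer of $(p,0)$ with the quotient by $\langle f_1,\dots,f_k\rangle$ is a point the paper takes for granted, and is a worthwhile addition.
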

\begin{proof}
As we have already commented, this amounts to prove that  for any category with finite limits, $\mathcal{C}$, the evaluation of a left-exact functor $F: \mathcal{C}^{\infty}{\rm \bf Rng}_{{\rm fp}}^{{\rm op}} \to \mathcal{C}$ at $\mathcal{C}^{\infty}(\mathbb{R})$ gives the following equivalence of categories:

$$\begin{array}{cccc}
    {\rm ev}_{\mathcal{C}^{\infty}(\mathbb{R})}: & {\rm \bf Lex}\,(\mathcal{C}^{\infty}{\rm \bf Rng}_{{\rm fp}}^{{\rm op}}, \mathcal{C}) & \rightarrow & \underline{\mathcal{C}^{\infty}-{\rm Rings}}\,(\mathcal{C}) \\
     & F & \mapsto & F(\mathcal{C}^{\infty}(\mathbb{R}))
  \end{array}$$

First note that this correspondence is indeed a function, for if $F$ is left-exact, then it preserves $\mathcal{C}^{\infty}-$ring-objects, hence it sends the $\mathcal{C}^{\infty}-$ring object $\mathcal{C}^{\infty}(\mathbb{R})$ of $\mathcal{C}^{\infty}{\rm \bf Rng}_{{\rm fp}}^{{\rm op}}$ into a $\mathcal{C}^{\infty}-$ring object of $\mathcal{C}$.\\

We are going to show that this functor is full, faithful and dense.\\

$\bullet$ ${\rm ev}_{\mathcal{C}^{\infty}(\mathbb{R})}$ \textbf{ is faithful};\\

Let $F, G \in {\rm Obj}\,({\rm \bf Lex}\,(\mathcal{C}^{\infty}{\rm \bf Rng}_{{\rm fp}}^{{\rm op}}, \mathcal{C}))$ and let $\eta, \theta: F \Rightarrow G$ be two natural transformations between them such that:

$$(\eta_{\mathcal{C}^{\infty}(\mathbb{R})}: F(\mathcal{C}^{\infty}(\mathbb{R})) \rightarrow G(\mathcal{C}^{\infty}(\mathbb{R}))) = (\theta_{\mathcal{C}^{\infty}(\mathbb{R})}: F(\mathcal{C}^{\infty}(\mathbb{R})) \rightarrow G(\mathcal{C}^{\infty}(\mathbb{R}))).$$

We prove that given any object $A$ of $\mathcal{C}^{\infty}-{\rm \bf Rng}_{{\rm fp}}^{{\rm op}}$, we have $\eta_A = \theta_A$.\\

First suppose $A = \mathcal{C}^{\infty}(\mathbb{R}^n)$, that is, $A = \mathcal{C}^{\infty}(\mathbb{R})\otimes_{\infty}\cdots \otimes_{\infty}\mathcal{C}^{\infty}(\mathbb{R})$ (which is a product in $\mathcal{C}^{\infty}-{\rm \bf Rng}_{{\rm fp}}^{{\rm op}}$). Since $F$ is left-exact, $F(\mathcal{C}^{\infty}(\mathbb{R}^n)) = F(\mathcal{C}^{\infty}(\mathbb{R}))^n$, and:

$$\eta_{\mathcal{C}^{\infty}(\mathbb{R}^n)}= \eta_{\mathcal{C}^{\infty}(\mathbb{R})} \times \cdots \times \eta_{\mathcal{C}^{\infty}(\mathbb{R})}: F(\mathcal{C}^{\infty}(\mathbb{R}))^n \rightarrow G(\mathcal{C}^{\infty}(\mathbb{R}))^n$$

Since $\eta_{\mathcal{C}^{\infty}(\mathbb{R})}=\theta_{\mathcal{C}^{\infty}(\mathbb{R})}$, it follows that $\eta_{\mathcal{C}^{\infty}(\mathbb{R}^n)} = \theta_{\mathcal{C}^{\infty}(\mathbb{R}^n)}$.\\

$\bullet$ ${\rm ev}_{\mathcal{C}^{\infty}(\mathbb{R})}$ \textbf{is  full;}\\

Let $F,G \in {\rm Obj}\,({\rm \bf Lex}\,(\mathcal{C}^{\infty}-{\rm \bf Rng}_{{\rm fp}}^{{\rm op}}, \mathcal{C}))$ and let $\varphi: F(\mathcal{C}^{\infty}(\mathbb{R})) \rightarrow G(\mathcal{C}^{\infty}(\mathbb{R}))$ be a morphism in $\underline{\mathcal{C}^{\infty}-{\rm Rings}}\,(\mathcal{C})$. It suffices to take $\eta: F \Rightarrow G$ such that $\eta_{\mathcal{C}^{\infty}(\mathbb{R})} = \varphi$.\\

$\bullet$ ${\rm ev}_{\mathcal{C}^{\infty}(\mathbb{R})}$ \textbf{is isomorphism dense;}\\

Let $R$ be any object in $\underline{\mathcal{C}^{\infty}-{\rm Rings}}\,(\mathcal{C})$.\\

Given this object $R$, we are going to construct $\phi_R \in {\rm Obj}\,({\rm \bf Lex}\,(\mathcal{C}^{\infty}-{\rm \bf Rng}_{\rm fp}^{\rm op}, \mathcal{C}))$ such that ${\rm ev}_{\mathcal{C}^{\infty}(\mathbb{R})}(\phi_R) \cong R$.\\

We set $\phi_R(\mathcal{C}^{\infty}(\mathbb{R}))=R$.\\

We first define the action of $\phi_R$ on the free $\mathcal{C}^{\infty}-$ring objects.\\

Now, given a free $\mathcal{C}^{\infty}-$ring o bject on $n$ generators, $R^n$ , since $\phi_R$ is to be left-exact, it transforms coproducts in $\mathcal{C}^{\infty}-{\rm \bf Rng}_{{\rm fp}}$ into products of $\mathcal{C}$. Hence, since $\mathcal{C}^{\infty}(\mathbb{R}^n) \cong \mathcal{C}^{\infty}(\mathbb{R})\otimes_{\infty} \cdots \otimes_{\infty}\mathcal{C}^{\infty}(\mathbb{R})$, we set:

$$\phi_R(\mathcal{C}^{\infty}(\mathbb{R}^n)) = R^n,$$

which establishes the action of $\phi_R$ on the free objects of $\mathcal{C}^{\infty}{\rm \bf Rng}_{{\rm fp}}^{{\rm op}}$.\\

Now we shall describe the action of $\phi_R$ on the arrows between objects of $\mathcal{C}^{\infty}{\rm \bf Rng}_{{\rm fp}}^{{\rm op}}$:

$$\begin{array}{cccc}
    (\phi_R)_1: & {\rm Mor}\,(\underline{\mathcal{C}^{\infty}-{\rm Rngs}}\,(\mathcal{C})) & \rightarrow & {\rm Nat}\,({\rm \bf Lex}\,(\mathcal{C}^{\infty}-{\rm \bf Rng}_{{\rm fp}}^{{\rm op}}, \mathcal{C}))
\end{array}$$

beginning with the $\mathcal{C}^{\infty}-$homomorphisms between the free objects of $\mathcal{C}^{\infty}-{\rm \bf Rng}_{{\rm fp}}^{{\rm op}}$.\\

An arrow (i.e., a $\mathcal{C}^{\infty}-$homomorphism) in $\mathcal{C}^{\infty}{\rm \bf Rng}_{{\rm fp}}$ between free $\mathcal{C}^{\infty}-$rings is a map:
$$\begin{array}{cccc}
    p: & \mathcal{C}^{\infty}(\mathbb{R}^k) & \rightarrow & \mathcal{C}^{\infty}(\mathbb{R}^n) \\
     & (\xymatrix{\mathbb{R}^k \ar[r]^{g} & \mathbb{R}}) & \mapsto & (\xymatrix{\mathbb{R}^n \ar[r]^{{\bf p}(g)} & \mathbb{R}})
  \end{array}$$

given by a $k-$tuple of smooth functions, $(p_1, \cdots, p_k): \mathbb{R}^n \to \mathbb{R}^k$:

$$\xymatrixcolsep{5pc}\xymatrix{
\mathbb{R}^k  \ar[dr]_{g} & & \mathbb{R}^n \ar[ll]_{(p_1, \cdots, p_k)} \ar@{-->}[dl]^{p(g)}\\
  & \mathbb{R}&
}$$

where $p_i = p(\pi_i): \mathbb{R}^n \to \mathbb{R}, i=1, \cdots, k$ and $\pi_i: \mathbb{R}^k \to \mathbb{R}$ is the projection on the $i-$th coordinate.\\

Each such smooth function $p_i: \mathbb{R}^n \to \mathbb{R}$ yields an arrow in $\mathcal{C}$:

\begin{equation}\label{13}
p_i^{(R)}: R^n \to R
\end{equation}

defined from the $\mathcal{C}^{\infty}-$ring structure (defined in the \textbf{Proposition \ref{Maricruz}}), say $\Psi$, of $R \in \underline{\mathcal{C}^{\infty}-{\rm Rings}}\,(\mathcal{C})$, which interprets  every smooth function in $\mathcal{C}$.\\

We have, as a direct consequence of the fact pointed out by Moerdijk and Reyes on the page 21 of \cite{MSIA}, a $1-1$ correspondence between $\mathcal{C}^{\infty}-$homomorphisms from $\mathcal{C}^{\infty}(\mathbb{R}^k)$ to $\mathcal{C}^{\infty}(\mathbb{R}^n)$ and $k-$tuples of smooth functions from $\mathbb{R}^n$ to $\mathbb{R}$:

$$\dfrac{p: \mathcal{C}^{\infty}(\mathbb{R}^k) \rightarrow \mathcal{C}^{\infty}(\mathbb{R}^n)}{\mathbb{R}^n \stackrel{(p_1, \cdots, p_k)}{\longrightarrow} \mathbb{R}^{k}}.$$







The image under $\phi_R$ of the arrow $p: \mathcal{C}^{\infty}(\mathbb{R}^k) \to \mathcal{C}^{\infty}(\mathbb{R}^n)$ is calculated first taking the $k$-tuple of smooth functions given by the correspondence:

$$\dfrac{p: \mathcal{C}^{\infty}(\mathbb{R}^k) \rightarrow \mathcal{C}^{\infty}(\mathbb{R}^n)}{\mathbb{R}^n \stackrel{(p_1, \cdots, p_k)}{\longrightarrow} \mathbb{R}^{k}}.$$

and then interpreting it in $R$:
\begin{equation}\label{14}
\phi_R(\xymatrix{\mathcal{C}^{\infty}(\mathbb{R}^k) \ar[r]^{p} & \mathcal{C}^{\infty}(\mathbb{R}^n)}) = p^{(R)} = (p_1^{(R)}, \cdots, p_k^{(R)}): R^n \to R^k
\end{equation}

To complete the definition of the functor $\phi_R$ on any finitely presented $\mathcal{C}^{\infty}-$ring $\dfrac{\mathcal{C}^{\infty}(\mathbb{R}^n)}{\langle p_1, \cdots, p_k\rangle}$, we note that, by definition, this quotient fits into a coequalizer diagram:

\begin{equation}\label{15}
\xymatrixcolsep{5pc}\xymatrix{ \mathcal{C}^{\infty}(\mathbb{R}^k) \ar[r]^{p} \ar@<-1ex>[r]_{0} & \mathcal{C}^{\infty}(\mathbb{R}^n) \ar@{->>}[r]^{q_{\langle p_1, \cdots, p_k\rangle}} & \dfrac{\mathcal{C}^{\infty}(\mathbb{R}^n)}{\langle p_1, \cdots, p_k\rangle}   }
\end{equation}

where $p_i = p(\pi_i)$ for $i=1, \cdots, k$ and $0(\pi_i) = 0$ for $i=1, \cdots,k$.\\

The category $\mathcal{C}$, by hypothesis, has all finite limits, so the category of the $\mathcal{C}^{\infty}-$rings objects in a category $\mathcal{C}$ has equalizers, and there is an equalizer diagram:

$$\xymatrix{E \,\, \ar@{>->}[r]^{e} & R^n \ar[r]^{p^{(R)}} \ar@<-1ex>[r]_{0^{(R)}} & R^m}$$

Thus we define the image under the contravariant functor $\phi_R$ of the finitely presented $\mathcal{C}^{\infty}-$ring $\dfrac{\mathcal{C}^{\infty}(\mathbb{R}^n)}{\langle p_1, \cdots, p_k\rangle}$ as:

$$\phi_R \left(\dfrac{\mathcal{C}^{\infty}(\mathbb{R}^n)}{\langle p_1, \cdots, p_k\rangle} \right):=E$$

that is, by the following equalizer diagram in $\mathcal{C}$:

\begin{equation}\label{16}
\xymatrixcolsep{5pc}\xymatrix{ \phi_R\left( \dfrac{\mathcal{C}^{\infty}(\mathbb{R}^n)}{\langle p_1, \cdots, p_k\rangle}\right) \,\, \ar@{>->}[r] & R^n \ar[r]^{p^{(R)}} \ar@<-1ex>[r]_{0^{(R)}} & R^k}
\end{equation}

Next, we define $\phi_R$ on a $\mathcal{C}^{\infty}-$homomorphism $h: B \to C$ between any two finitely presented $\mathcal{C}^{\infty}-$rings. Let $\dfrac{\mathcal{C}^{\infty}(\mathbb{R}^n)}{\langle p_1, \cdots, p_k\rangle}$ and $\dfrac{\mathcal{C}^{\infty}(\mathbb{R}^m)}{\langle g_1, \cdots, g_t\rangle}$ be two finitely presented $\mathcal{C}^{\infty}-$rings and let:

$$\dfrac{\mathcal{C}^{\infty}(\mathbb{R}^n)}{\langle f_1, \cdots, f_k\rangle} \stackrel{\Phi}{\rightarrow} \dfrac{\mathcal{C}^{\infty}(\mathbb{R}^m)}{\langle g_1, \cdots, g_t\rangle}$$

be a $\mathcal{C}^{\infty}-$homomorphism. The $\mathcal{C}^{\infty}-$homomorphism $\Phi$ is determined by some $\mathcal{C}^{\infty}-$function:

$$\begin{array}{cccc}
\varphi: & \mathbb{R}^m & \rightarrow & \mathbb{R}^n\\
         & x & \mapsto & (\varphi_1(x), \cdots, \varphi_n(x))
\end{array}$$

such that $\langle f_1, \cdots, f_k\rangle \subseteq \varphi_*[\langle g_1, \cdots, g_t\rangle]$. Hence, the $\mathcal{C}^{\infty}-$homomorphism $\Phi$ is determined by the equivalence classes of $n$ $\mathcal{C}^{\infty}-$functions: $\varphi_1, \cdots, \varphi_n: \mathbb{R}^m \to \mathbb{R}$ such that:

\begin{equation}\label{17}(\forall j \in \{1, \cdots,k \})(f_j \circ \varphi = f_j \circ (\varphi_1, \cdots, \varphi_n) \in \langle g_1, \cdots, g_t \rangle).
\end{equation}

As in \eqref{14}, these $n$ smooth functions determine a $\mathcal{C}^{\infty}-$homomorhpism $\varphi^{(R)}: R^m \to R^n$. Now $\phi_R\left(\dfrac{\mathcal{C}^{\infty}(\mathbb{R}^n)}{\langle f_1, \cdots, f_k\rangle}\right)$ and $\phi_R\left(\dfrac{\mathcal{C}^{\infty}(\mathbb{R}^m)}{\langle g_1, \cdots, g_t\rangle}\right)$ fit into equalizer rows:

$$\xymatrixcolsep{5pc}\xymatrix{
\phi_R\left(\dfrac{\mathcal{C}^{\infty}(\mathbb{R}^n)}{\langle f_1, \cdots, f_k\rangle}\right)\,\, \ar@{>->}[r] & R^n \ar[r]^{f^{(R)}} \ar@<-1ex>[r]_{0^{(R)}} & R^k \\
\phi_R\left(\dfrac{\mathcal{C}^{\infty}(\mathbb{R}^m)}{\langle g_1, \cdots, g_t\rangle}\right) \,\, \ar@{>->}[r]_{\alpha} & R^m \ar[u]^{\varphi^{(R)}}\ar[r]^{g^{(R)}} \ar@<-1ex>[r]_{0^{(R)}} & R^t
}$$

where $f^{(R)}: R^n \to R^k$ is the interpretation of $f= (f_1, \cdots, f_k): \mathbb{R}^n \to \mathbb{R}^k$,  $g^{(R)}: R^n \to R^t$ is the interpretation of $g = (g_1, \cdots, g_t): \mathbb{R}^m \to \mathbb{R}^t$, the equalizer $\alpha$ in the lower left is determined by $m$ arrows, $\alpha = (\alpha_1, \cdots, \alpha_m)$ with

$$\alpha_s: \phi_R\left(\dfrac{\mathcal{C}^{\infty}(\mathbb{R}^m)}{\langle g_1, \cdots, g_t\rangle}\right) \to R, s=1, \cdots, m$$

which satisfy (by definition) $g_{\ell}(\alpha_1, \cdots, \alpha_m) = 0$ for every $\ell \in \{1, \cdots, t \}$. \\

We have:

$$f \circ \varphi = (f_1 \circ \varphi, \cdots, f_k \circ \varphi)$$

so

$$f^{(R)} \circ \varphi^{(R)} = (f_1^{(R)} \circ \varphi^{(R)}, \cdots, f_k^{(R)} \circ \varphi^{(R)}).$$

Since for every $i=1, \cdots,k$, $f_i \circ \varphi \in \langle g_1, \cdots, g_t\rangle$, there are $\ell$ $\mu_1, \cdots, \mu_t \in \mathcal{C}^{\infty}(\mathbb{R}^m)$ such that:
$${f_i}^{(R)}\circ \varphi^{(R)} = \sum_{\ell = 1}^{t} \mu_{\ell}\cdot g_{\ell},$$

and by \eqref{17}, it follows that:

$$(\forall i \in \{1, \cdots, k \})({f_i}^{(R)}\circ \varphi^{(R)}(\alpha_1, \cdots, \alpha_m) = \sum_{\ell =1}^{t} \mu_{\ell}(\alpha_1, \cdots, \alpha_m)\cdot \underbrace{g_{\ell}(\alpha_1, \cdots, \alpha_m)}_{=0}),$$

so

$$f^{(R)} \circ (\varphi^{(R)} \circ \alpha) = ({f_1}^{(R)}\circ \varphi^{(R)}, \cdots, {f_k}^{(R)}\circ \varphi^{(R)}) = 0^{(R)}.$$

Hence, the composite $\varphi^{(R)} \circ \alpha$ consists of $n$ arrows to $R$ which satisfy the conditions $f \circ (\varphi^{(R)} \circ \alpha) = 0$.\\

Therefore, by the universal property of equalizers, there is a unique arrow $\phi_R(h)$, indicated as follows:

$$\xymatrixcolsep{5pc}\xymatrix{
\phi_R(B) \,\, \ar@{>->}[r] & R^n \ar[r]^{f^{(R)}} \ar@<-1ex>[r]_{0^{(R)}} & R^k \\
\phi_R(C) \ar@{-->}[u]^{\exists ! \phi_R(\Phi)} \ar@{>->}[ur]_{\varphi^{(R)} \circ \alpha} & &
}$$

Note that $\phi_R(\Phi)$ is independent of the choice of $\varphi_i$ in their equivalence classes, so $\phi_R$ is a functor, as required in \eqref{16}.\\

\textbf{Claim:} For each $\mathcal{C}^{\infty}-$ring object  $R$ in $\mathcal{C}$, the functor $\phi_R$ thus defined is a left-exact functor $\phi_R: \mathcal{C}^{\infty}{\rm \bf Rng}_{{\rm fp}} \to \mathcal{C}$.\\

We are going to show that $\phi_R$ preserves terminal object, binary products and equalizers, so $\phi_R$ will preserve all finite limits (which are constructed from these).\\

In fact, $\phi_R(\mathbb{R}^0)$ is the empty product of copies of $R$ [since $\phi_R(\mathbb{R}^0) = R^{0}$ for $n=0$], i.e., $\phi_R(\mathbb{R}^0)=1$, so $\phi_R$ preserves the terminal object.\\

Also, since the product of two equalizer diagrams is again an equalizer, one easily verifies from \eqref{16} that $\phi_R$ is such that for any $\dfrac{\mathcal{C}^{\infty}(\mathbb{R}^n)}{\langle f_1, \cdots, f_k\rangle}$ and any $\dfrac{\mathcal{C}^{\infty}(\mathbb{R}^m)}{\langle g_1, \cdots, g_t\rangle}$ we have:

$$\phi_R \left( \dfrac{\mathcal{C}^{\infty}(\mathbb{R}^n)}{\langle f_1, \cdots, f_k\rangle} \otimes_{\infty} \dfrac{\mathcal{C}^{\infty}(\mathbb{R}^m)}{\langle g_1, \cdots, g_t\rangle}\right) \cong \phi_R\left( \dfrac{\mathcal{C}^{\infty}(\mathbb{R}^n)}{\langle f_1, \cdots, f_k\rangle}\right) \times \phi_R \left( \dfrac{\mathcal{C}^{\infty}(\mathbb{R}^m)}{\langle g_1, \cdots, g_t\rangle}\right)$$

that is, $\phi_R$ preserves binary products.\\

Finally, to see that $\phi_R$ preserves equalizers, consider a coequalizer constructed in the evident way from two arbitrary maps $s, s'$ in the category of finitely presented $\mathcal{C}^{\infty}-$rings,

\begin{multline}\label{22}
\xymatrixcolsep{5pc}\xymatrix{\dfrac{\mathcal{C}^{\infty}(\mathbb{R}^m)}{\langle p_1, \cdots, p_k\rangle} \ar[r]^{s} \ar@<-1ex>[r]_{s'} & \dfrac{\mathcal{C}^{\infty}(\mathbb{R}^n)}{\langle g_1, \cdots, g_t\rangle} \ar@{->>}[r]& }\\
\twoheadrightarrow \dfrac{\mathcal{C}^{\infty}(\mathbb{R}^n)}{\langle g_1, \cdots, g_t, s\circ \pi_1 - s'\circ \pi_1, \cdots, s \circ \pi_k - s' \circ \pi_k \rangle }
\end{multline}

We must show that $\phi_R$ sends this coequalizer \eqref{22} to an equalizer diagram in $\mathcal{C}$.\\

First of all, if \eqref{22} is a coequalizer, then so is the diagram:

\begin{multline}\label{23}
\xymatrixcolsep{5pc}\xymatrix{ \mathcal{C}^{\infty}(\mathbb{R}^m)\ar[r]^{s \circ q_I} \ar@<-1ex>[r]_{s'\circ q_I} & \dfrac{\mathcal{C}^{\infty}(\mathbb{R}^n)}{\langle g_1, \cdots, g_t\rangle} \ar@{->>}[r] & }\\ \twoheadrightarrow \dfrac{\mathcal{C}^{\infty}(\mathbb{R}^n)}{\langle g_1, \cdots, g_t, s\circ \pi_1 \circ q_I - s'\circ \pi_1 \circ q_I, \cdots, s \circ \pi_k \circ q_I - s'\circ \pi_k \circ q_I\rangle}
\end{multline}

obtained by precomposing \eqref{22} with the epimorphism $q_I: \mathcal{C}^{\infty}(\mathbb{R}^m) \to \dfrac{\mathcal{C}^{\infty}(\mathbb{R}^m)}{\langle p_1, \cdots, p_k\rangle}$.\\

Moreover, since $\phi_R$ sends the latter epimorphism, $q_I$, to a monomorphism in $\mathcal{C}$ [in fact, to an equalizer, as in \eqref{16}, and every equalizer is a monomorphism], $\phi_R$ sends \eqref{22} to an equalizer if, and only if it does for \eqref{23}. So it suffices to show that $\phi_R$ sends coequalizers of the special form \eqref{23} to equalizers in $\mathcal{C}$.\\

Next, since \eqref{23} is a coequalizer, so is

\begin{multline}\label{24}
\xymatrixcolsep{5pc}\xymatrix{ \mathcal{C}^{\infty}(\mathbb{R}^m)\ar[r]^{s \circ q_I - s' \circ q_I} \ar@<-1ex>[r]_{s'\circ q_I} & \dfrac{\mathcal{C}^{\infty}(\mathbb{R}^n)}{\langle g_1, \cdots, g_t\rangle} \ar@{->>}[r] & } \\ \twoheadrightarrow \dfrac{\mathcal{C}^{\infty}(\mathbb{R}^n)}{\langle g_1, \cdots, g_t, s\circ \pi_1 \circ q_I - s'\circ \pi_1 \circ q_I, \cdots, s \circ \pi_k \circ q_I - s'\circ \pi_k \circ q_I\rangle}
\end{multline}

and one readly checks that $\phi_R$ sends \eqref{23} to an equalizer in $\mathcal{C}$ if, and only if it does for \eqref{24}. So, by replacing $s$ by $s - s'$ and $s'$ by $0$ in \eqref{23} we see that is suffices to show that $\phi_R$ sends coequalizers of the form \eqref{23} with $s'=0$ to equalizers in $\mathcal{C}$.\\

Given a $\mathcal{C}^{\infty}-$homomorphism $p: \mathcal{C}^{\infty}(\mathbb{R}^k) \to \mathcal{C}^{\infty}(\mathbb{R}^n)$, construct the diagram:

$$\xymatrixcolsep{5pc}\xymatrix{
   & \mathcal{C}^{\infty}(\mathbb{R}^k) \ar[d]^{0} \ar@<-1ex>[d]_{p} & \\
\mathcal{C}^{\infty}(\mathbb{R}^{m+k}) \ar[r]^{(s,p)} \ar@<-1ex>[r]_{0} \ar[d]_{\substack{\pi_i \mapsto 0\\ m+1 \leq i \leq k}} & \mathcal{C}^{\infty}(\mathbb{R}^n) \ar@{->>}[d] \ar@{->>}[r] & \dfrac{\mathcal{C}^{\infty}(\mathbb{R}^n)}{\langle p_1, \cdots, p_k, s \circ \pi_1, \cdots, s \circ \pi_m\rangle} \ar@{=}[d]\\
\mathcal{C}^{\infty}(\mathbb{R}^m) \ar[r]^{s} \ar@<-1ex>[r]_{0} & \dfrac{\mathcal{C}^{\infty}(\mathbb{R}^n)}{\langle p_1, \cdots, p_k \rangle} \ar[r] & \dfrac{\mathcal{C}^{\infty}(\mathbb{R}^n)}{\langle p_1, \cdots, p_k, s \circ \pi_1, \cdots, s \circ \pi_m\rangle}
}$$

consisting of three coequalizers, two of the form \eqref{15}. By definition \eqref{16}, $\phi_R$ sends both the vertical coequalizer and the upper horizontal coequalizer to equalizers in $\mathcal{C}$. It follows, by diagram chasing that it also sends the lower horizontal coequalizer to an equalizer in $\mathcal{C}$.\\

This shows that $\phi_R$ is a left-exact functor.\\

By construction, ${\rm ev}_{\mathcal{C}^{\infty}(\mathbb{R})}(\phi_R) = \phi_R(\mathcal{C}^{\infty}(\mathbb{R}))=R$, so ${\rm ev}_{\mathcal{C}^{\infty}(\mathbb{R})}$ is a fully faithful dense functor, hence an equivalence of categories.

\end{proof}

Combining the results presented in this section and the ones stated in the {section 1} on classifying
topoi, we obtain the following:


\begin{theorem}The presheaf topos ${\rm \bf Sets}^{\mathcal{C}^{\infty}{\rm \bf Rng}_{{\rm fp}}}$ is a classifying topos for $\mathcal{C}^{\infty}-$\-rings, and the universal $\mathcal{C}^{\infty}-$ring $R$ is the $\mathcal{C}^{\infty}-$ring object in ${\rm \bf Sets}^{\mathcal{C}^{\infty}{\rm \bf Rng}_{{\rm fp}}}$ given by ${\mathcal{C}^{\infty}{\rm \bf Rng}_{{\rm fp}}}(\mathcal{C}^{\infty}(\mathbb{R}),-)$ naturally isomorphic to the forgetful functor from $\mathcal{C}^{\infty}{\rm \bf Rng}_{{\rm fp}}$ to ${\rm \bf Sets}$. Thus, for any Grothendieck topos $\mathcal{E}$ there is an equivalence of categories, natural in $\mathcal{E}$:

$$\begin{array}{cccc}
     & {\rm Geom}\,(\mathcal{E},{\rm \bf Sets}^{\mathcal{C}^{\infty}{\rm \bf Rng}_{{\rm fp}}} ) & \rightarrow & \underline{\mathcal{C}^{\infty}{\rm Rings}}\,(\mathcal{E}) \\
     & f & \mapsto & f^{*}(R)
  \end{array}$$
\end{theorem}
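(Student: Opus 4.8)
The plan is to obtain the statement by concatenating the preceding theorem with the general facts on classifying topoi collected in Section~1, part~(IV); all the substantive categorical work is already done, so what remains is a composition of equivalences together with the identification of the generic object. First I would note that $\mathcal{C}^{\infty}{\rm \bf Rng}_{{\rm fp}}$ is essentially small, so replacing it by a small skeleton (which alters none of the functor categories below up to equivalence) I may treat $\mathcal{C}_T := \mathcal{C}^{\infty}{\rm \bf Rng}_{{\rm fp}}^{{\rm op}}$ as a \emph{small} left exact category; finite completeness of $\mathcal{C}_T$ is precisely the content of the Remark recording that $\mathcal{C}^{\infty}{\rm \bf Rng}_{{\rm fp}}$ has all finite colimits. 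By the preceding theorem, $\mathcal{C}_T$ is freely generated, as a finitely complete category, by the $\mathcal{C}^{\infty}$-ring object $u := \mathcal{C}^{\infty}(\mathbb{R})$. Hence the relevant bullet of Section~1(IV) applies directly: the presheaf topos ${\rm \bf Sets}^{\mathcal{C}^{\infty}{\rm \bf Rng}_{{\rm fp}}} = {\rm \bf Sets}^{(\mathcal{C}_T)^{{\rm op}}}$ classifies the theory of left exact functors on $\mathcal{C}_T$, the generic model being the Yoneda embedding $Y_{\mathcal{C}_T}$, and for every Grothendieck topos $\mathcal{E}$ there is an equivalence, natural in $\mathcal{E}$,
$$ {\rm Geom}\,(\mathcal{E}, {\rm \bf Sets}^{\mathcal{C}^{\infty}{\rm \bf Rng}_{{\rm fp}}}) \stackrel{\simeq}{\longrightarrow} {\rm \bf Lex}\,(\mathcal{C}^{\infty}{\rm \bf Rng}_{{\rm fp}}^{{\rm op}}, \mathcal{E}), \qquad f \mapsto f^{*}\circ Y_{\mathcal{C}_T}. $$

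Next I would compose this with the equivalence ${\rm ev}_{\mathcal{C}^{\infty}(\mathbb{R})}\colon {\rm \bf Lex}\,(\mathcal{C}^{\infty}{\rm \bf Rng}_{{\rm fp}}^{{\rm op}}, \mathcal{E}) \stackrel{\simeq}{\longrightarrow} \underline{\mathcal{C}^{\infty}-{\rm Rings}}\,(\mathcal{E})$ supplied by the preceding theorem, which applies since any Grothendieck topos $\mathcal{E}$ is finitely complete. The composite sends a geometric morphism $f$ to $(f^{*}\circ Y_{\mathcal{C}_T})(\mathcal{C}^{\infty}(\mathbb{R})) = f^{*}(Y_{\mathcal{C}_T}(\mathcal{C}^{\infty}(\mathbb{R})))$; here one uses that $f^{*}$ is left exact and therefore commutes with the evaluation-at-$\mathcal{C}^{\infty}(\mathbb{R})$ functor, so that the $\mathcal{C}^{\infty}$-ring object attached to $f^{*}\circ Y_{\mathcal{C}_T}$ is the image under $f^{*}$ of the one attached to $Y_{\mathcal{C}_T}$. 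Writing $R := Y_{\mathcal{C}_T}(\mathcal{C}^{\infty}(\mathbb{R}))$ identifies the composite with $f \mapsto f^{*}(R)$, and naturality in $\mathcal{E}$ is inherited from the two factors.

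It remains to describe $R$ concretely. Unwinding the Yoneda embedding for $\mathcal{C}_T = \mathcal{C}^{\infty}{\rm \bf Rng}_{{\rm fp}}^{{\rm op}}$ gives, as a covariant functor on $\mathcal{C}^{\infty}{\rm \bf Rng}_{{\rm fp}}$, $R = \mathcal{C}_T(-,\mathcal{C}^{\infty}(\mathbb{R})) = \mathcal{C}^{\infty}{\rm \bf Rng}_{{\rm fp}}(\mathcal{C}^{\infty}(\mathbb{R}),-)$, whose $\mathcal{C}^{\infty}$-ring-object structure is the one transported from $\mathcal{C}^{\infty}(\mathbb{R})$ as in Proposition~\ref{Maricruz}. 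Since $\mathcal{C}^{\infty}(\mathbb{R})$ is the free $\mathcal{C}^{\infty}$-ring on the single generator ${\rm id}_{\mathbb{R}}$, evaluation at this generator yields, for each finitely presented $A$, a bijection $\mathcal{C}^{\infty}{\rm \bf Rng}_{{\rm fp}}(\mathcal{C}^{\infty}(\mathbb{R}),A) \cong U(A)$ natural in $A$, exhibiting $R$ as naturally isomorphic to the forgetful functor $U\colon \mathcal{C}^{\infty}{\rm \bf Rng}_{{\rm fp}} \to {\rm \bf Sets}$. I expect no serious obstacle: the hard work lies entirely in the preceding theorem, and the one-generator freeness of $\mathcal{C}^{\infty}(\mathbb{R})$ is already recorded. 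The only points deserving care are the passage to a small skeleton (needed so that the presheaf topos is genuinely a Grothendieck topos) and the verification that the abstractly-transported ring structure on $R$ coincides with the one induced by the generic model — both routine consequences of the results above.
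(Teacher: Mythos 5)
Your proposal is correct and follows exactly the route the paper intends: the paper's own ``proof'' consists of the single remark that the theorem is obtained by combining the preceding theorem (the evaluation equivalence ${\rm ev}_{\mathcal{C}^{\infty}(\mathbb{R})}$ exhibiting $\mathcal{C}^{\infty}{\rm \bf Rng}_{{\rm fp}}^{{\rm op}}$ as the finitely complete category freely generated by $\mathcal{C}^{\infty}(\mathbb{R})$) with the Section~1(IV) fact that a small left exact category's presheaf topos classifies left exact functors with the Yoneda embedding as generic model. Your write-up merely makes explicit the details the paper leaves implicit (the skeleton issue, the identification $R = Y(\mathcal{C}^{\infty}(\mathbb{R})) \cong U$ via one-generator freeness), all of which are sound.
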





\section{A Classifying Topos for the Theory of local\\ $C^{\infty}-$rings}

Now we describe the $\mathcal{C}^{\infty}-$analog of the Zariski site, whose corresponding topos of sheaves  will be the  classifying topos of the theory of the $\mathcal{C}^{\infty}-$local rings.\\

\subsection{The Smooth Zariski Site}

\hspace{0.5cm}In the following we describe the $\mathcal{C}^{\infty}-$analog of the Zariski site, which classifies the theory of the $\mathcal{C}^{\infty}-$local rings.\\

It is known that the topos of sheaves over the Zariski site classifies the theory of (commutative unital) local rings (see, for example, \cite{MakkaiReyes}). We briefly recall its construction.\\

Let $\cal C$ be (some) skeleton of the category of all finitely presented commutative unital rings, ${\rm \bf CRing}_{\rm fp}$. Given a finitely presented commutative unital ring, $A$, we say that a finite family of ring homomorphisms, $\{ f_i: A \to B_i | i \in \{1, \cdots, n\}\}$ is a ``co-coverage'' of $A$ if, and only if there are $a_1, \cdots, a_n \in A$ with $\langle \{a_1, \cdots, a_n \} \rangle = A $ such that for every $i \in \{ 1, \cdots, n\}$, $(B_i, A \stackrel{f_i}{\rightarrow} B_i) \cong (A[{a_i}^{-1}], \eta_{a_i}: A \to A[{a_i}^{-1}])$. The set of all co-covering families of $A$ is denoted by ${\rm coCov}\,(A)$. Naturally, given any isomorphism $\varphi: A \to B$, $\{ A \stackrel{\varphi}{\rightarrow} B \} \in {\rm coCov}\,(A)$, and  for any set of generators of $A$, $\{ a_1, \cdots, a_n\}$, $\{ \eta_{a_i}: A \to A[{a_i}^{-1}] | i \in \{ 1, \cdots, n\} \} \in {\rm coCov}\,(A)$.\\

Passing to the opposite category, $\mathcal{C}^{\rm op}$, we say that a finite set of arrows $\{ f_i: B_i \to A | i \in \{ 1, \cdots, n\}\}$ is a ``covering family for $A$'' if, and only if $\{ {f_i}^{\rm op}: A \to B_i | i \in \{ 1, \cdots, n\}\} \in {\rm coCov}(A)$, and we write $\{ f_i: B_i \to A | i \in \{ 1, \cdots, n\}\} \in {\rm Cov}\,(A)$. The Grothendieck-Zariski topology on $\mathcal{C}^{\rm op}$ is the one generated by ${\rm Cov}$, $J_{{\rm Cov}}$, that is, given any commutative unital ring $A$, $J_{{\rm Cov}}(A)$  consists of all sieves $S$ on $A$   generated by ${\rm Cov}\,(A)$, that is, $S \subseteq \cup_{C \in {\rm Obj}\,({\cal C})}{\rm Hom}_{{\cal C}}(C,A)$ such that every $g \in S$ factors through some element of ${\rm Cov}\,(A)$.\\

The pair $({\cal C}^{\rm op}, J_{\rm Cov})$ thus obtained is the so-called ``Zariski site''. The topos of sheaves over $({\cal C}^{\rm op}, J_{{\rm Cov}})$, ${\cal Z} = {\rm Sh}\,({\cal C}^{\rm op}, J_{{\rm Cov}})$ is the classifying topos for the theory of local commutative unital rings.\\

In order to define the covering families for $\mathcal{C}^{\infty}-$rings we need, just as in the algebraic case, an appropriate notion of ``a $\mathcal{C}^{\infty}-$ring of fractions'':

\begin{definition}\label{nacional}
Let $A$ be a $\mathcal{C}^{\infty}-$ring and let $S \subseteq A$. The $\mathcal{C}^{\infty}-$ring of
fractions of $A$ with respect to $S$ is a pair $(A\{ S^{-1}\}, \eta_S: A \to A\{ S^{-1}\})$
where $A\{ S^{-1}\}$ is a $\mathcal{C}^{\infty}-$ring and $\eta_S: A \to A\{ S^{-1}\}$ is a $\mathcal{C}^{\infty}-$homomorphism such that:
\begin{itemize}
    \item[{\rm (i)}]{$ \eta_S[S] \subseteq (A\{ S^{-1}\})^{\times}$;}
    \item[{\rm (ii)}]{$\eta_S$ satisfies the following universal property: given any $\mathcal{C}^{\infty}-$homomorphism $g: A \to C$ such that $g[S] \subseteq C^{\times}$, there is a unique $\mathcal{C}^{\infty}-$homomorphism $\widetilde{g}: A\{ S^{-1}\} \to C$ such that the following diagram commutes:

$$\xymatrixcolsep{5pc}\xymatrix{
A \ar[r]^{\eta_S} \ar[dr]_{g} & A\{ S^{-1}\} \ar[d]^{\widetilde{g}}\\
  & C}$$}
\end{itemize}
\end{definition}

As the matter of fact, for each $\mathcal{C}^{\infty}-$ring $A$ and each subset $S \subseteq A$, there exists a $\mathcal{C}^{\infty}-$ring of fractions of $A$ with respect to $S$: 
$$A\{ S^{-1}\} := \dfrac{A\{ x_s | s \in S\}}{I_S},$$

where $I_S = \langle \{ \iota_A(s)\cdot x_s - 1 | s \in S\} \rangle$ and
$$\eta_S := q_{I_S}\circ \iota_A: A \to \dfrac{A\{ x_s | s \in S\}}{\langle \{ \iota_A(s)\cdot x_s - 1 | s \in S\} \rangle}$$

In the \textbf{Theorem 1.4} of \cite{rings1}, I. Moerdijk and G. Reyes give two conditions which capture the notion of ``the $\mathcal{C}^{\infty}-$ring of fractions with respect to one element, $S=\{ a\}, a \in A$''. The following proposition presents its natural extension to arbitrary subsets.

\begin{proposition}(cf. \cite{tese})\label{nacional-pr}Let $A$ be a $\mathcal{C}^{\infty}-$ring and let $S \subseteq A$. Then $\mathcal{C}^{\infty}-$ring of
fractions of $A$ with respect to $S$ is the unique (up to isomorphism)  pair $(B, h)$
where $B$ is a $\mathcal{C}^{\infty}-$ring and $h: A \to B$ is a $\mathcal{C}^{\infty}-$homomorphism such that $h[S] \subseteq B^\times$ satisfying the following conditions:
\begin{itemize}
    \item[{\rm (i)}]{$(\forall \beta \in B)(\exists c \in A)(\exists d \in A)((h(c)\in B^{\times})\& (\beta \cdot h(c) = h(d)))$;}
    \item[{\rm (ii)}]{$(\forall a \in A)(h(a)=0 \to (\exists c \in A)(h(c)\in B^{\times})(a \cdot c = 0))$.}
\end{itemize}
\end{proposition}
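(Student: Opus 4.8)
The plan is to prove this characterization theorem by establishing two directions. First I would verify that the explicitly constructed $\mathcal{C}^{\infty}$-ring of fractions $(A\{S^{-1}\}, \eta_S)$ from Definition \ref{nacional} actually satisfies conditions (i) and (ii). Second, I would show that any pair $(B,h)$ satisfying (i) and (ii) must enjoy the universal property that characterizes $A\{S^{-1}\}$ uniquely up to isomorphism, thereby forcing $(B,h) \cong (A\{S^{-1}\}, \eta_S)$. This two-step strategy decouples the \emph{existence} of a concrete model from the \emph{axiomatic characterization}, which is the real content of the proposition.

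For the first direction, I would argue that condition (i) holds because every element $\beta$ of the quotient $A\{S^{-1}\} = A\{x_s \mid s \in S\}/I_S$ is, by the description of $\mathcal{C}^{\infty}$-polynomials, a $\mathcal{C}^{\infty}$-expression in the images $\eta_S(a)$ and the inverted generators $x_s = \eta_S(s)^{-1}$; clearing denominators by multiplying through by a suitable product of the $\eta_S(s)$, which is a unit by condition (i) of Definition \ref{nacional}, expresses $\beta \cdot h(c) = h(d)$ for appropriate $c, d \in A$. Here I expect to invoke the flat/local structure of $\mathcal{C}^{\infty}$-localization, essentially reducing to the one-element case handled in Theorem 1.4 of \cite{rings1} and patching over the finitely many generators actually occurring in $\beta$. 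Condition (ii) is the statement that the kernel of $\eta_S$ consists exactly of elements annihilated after multiplication by some element that becomes a unit; this is precisely the description of when $\eta_S(a) = 0$ in a localization, and I would extract it from the ideal $I_S$ together with the universal property.

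For the second direction, suppose $(B,h)$ satisfies (i) and (ii). Given any $\mathcal{C}^{\infty}$-homomorphism $g \colon A \to C$ with $g[S] \subseteq C^{\times}$, I would construct the unique factorization $\widetilde{g} \colon B \to C$ as follows. Using (i), each $\beta \in B$ can be written as $\beta = h(d) \cdot h(c)^{-1}$ with $h(c) \in B^{\times}$; I would then \emph{define} $\widetilde{g}(\beta) := g(d) \cdot g(c)^{-1}$, which makes sense because $g(c) \in C^{\times}$ whenever $h(c) \in B^{\times}$ — this last implication is where condition (ii) does the essential work, guaranteeing well-definedness by showing that if two representations $h(d)h(c)^{-1} = h(d')h(c')^{-1}$ agree in $B$, then the corresponding expressions in $C$ agree as well. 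Verifying that $\widetilde{g}$ is a genuine $\mathcal{C}^{\infty}$-homomorphism (not merely a ring homomorphism) requires checking compatibility with the interpretation of \emph{all} smooth functions, which I would reduce to the algebraic fraction identities via the already-established $\mathcal{C}^{\infty}$-ring structure.

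The main obstacle I anticipate is precisely the \textbf{well-definedness of $\widetilde{g}$} and its compatibility with the full $\mathcal{C}^{\infty}$-structure. In the purely algebraic setting one relies on the multiplicative calculus of fractions, but in the $\mathcal{C}^{\infty}$-context an element of $B$ need not be a simple quotient of images of $A$ in an obvious algebraic sense, and smooth functions do not commute with fractions in a naive way. The subtle point is to show that condition (ii) is strong enough to control the kernel so that distinct fraction representations yielding the same element of $B$ are mapped consistently; I expect to handle this by appealing to the one-variable characterization of Moerdijk--Reyes and extending it additively/multiplicatively over the finite subsets of $S$ that appear, together with the uniqueness clause already built into Definition \ref{nacional}. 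Once well-definedness is secured, uniqueness of $\widetilde{g}$ follows immediately from the fact that $h$ has dense image in the localization sense, and the isomorphism $(B,h) \cong (A\{S^{-1}\}, \eta_S)$ is then a formal consequence of both pairs satisfying the same universal property.
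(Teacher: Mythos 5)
The paper itself offers no proof of this proposition --- it only cites \cite{tese} --- so your proposal has to stand on its own. Your two-step outline (verify (i)--(ii) for $(A\{S^{-1}\},\eta_S)$, then show that (i)--(ii) force the universal property) is the right shape, and the first step, reduced via finite subsets of $S$ to the one-element case of Moerdijk--Reyes, is unobjectionable in outline. The genuine gap is in the second step, at exactly the point you flag as the main obstacle and then claim to resolve. To define $\widetilde{g}(\beta)=g(d)\cdot g(c)^{-1}$ you need the implication ``$h(c)\in B^{\times}\Rightarrow g(c)\in C^{\times}$'' for every $\mathcal{C}^{\infty}$-homomorphism $g$ inverting $S$, and you assert that condition (ii) supplies it. It does not: (ii) only constrains $\ker h$ and says nothing about which elements of $A$ become invertible under $h$. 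What you need is the inclusion $h^{\dashv}[B^{\times}]\subseteq S^{\infty-{\rm sat}}$, and conditions (i)--(ii), as literally quantified over all $c$ with $h(c)\in B^{\times}$ and with no reference back to $S$, do not imply it. Concretely: take $A=\mathcal{C}^{\infty}(\mathbb{R})$, $S=\{1\}$, $B=\mathcal{C}^{\infty}(\mathbb{R}\setminus\{0\})\cong A\{c_0^{-1}\}$ for $c_0$ a smooth function with zero set $\{0\}$, and $h$ the restriction map. Then $h[S]\subseteq B^{\times}$, (ii) holds vacuously ($h$ is injective), and (i) holds (for any $\beta\in\mathcal{C}^{\infty}(\mathbb{R}\setminus\{0\})$ there is a $c$ flat at $0$ and nonvanishing elsewhere with $\beta\cdot h(c)$ extending smoothly by $0$), yet $(B,h)\not\cong(A\{1^{-1}\},\eta_1)=(A,{\rm id}_A)$. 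So the step fails as written; the characterization only becomes provable once the witnesses $c$ in (i) and (ii) are required to lie in (the $\mathcal{C}^{\infty}$-saturation of) $S$ rather than merely in $h^{\dashv}[B^{\times}]$, and your well-definedness argument needs that restricted quantifier.

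A secondary, repairable weakness: even with well-definedness secured, you must check that $\widetilde{g}$ commutes with the interpretation of \emph{every} smooth function symbol, not just with $+$ and $\cdot$; you acknowledge this but only gesture at it. The standard way to sidestep it is to run uniqueness through the canonical comparison map $\varphi\colon A\{S^{-1}\}\to B$ supplied by the universal property of $\eta_S$ (a $\mathcal{C}^{\infty}$-homomorphism for free), using (i) for surjectivity and (ii) for injectivity, since a bijective $\mathcal{C}^{\infty}$-homomorphism is an isomorphism. Note, however, that surjectivity of $\varphi$ again requires $h^{\dashv}[B^{\times}]=S^{\infty-{\rm sat}}$, so this route does not evade the gap above --- it only isolates it.
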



We introduce the $\mathcal{C}^{\infty}-$analog of the (algebraic) concept of saturation of a
multiplicative subset of a ring in the following:

\begin{definition}Let $A$ be a $\mathcal{C}^{\infty}-$ring and let $S \subseteq A$. The \textbf{$\mathcal{C}^{\infty}-$saturation of $S$} is given by:
$$S^{\infty-{\rm sat}} = \eta_S^{\dashv}[A\{ S^{-1}\}^{\times}]$$
where $A\{ S^{-1}\}$ and $\eta_S: A \to A\{ S^{-1}\}$ were given in \textbf{Definition \ref{nacional}}
\end{definition}

\textbf{Notation:} In virtue of \textbf{Proposition \ref{nacional-pr}}, given any $\beta \in A\{ S^{-1}\}$, there are $b \in A$ and $c \in S^{\infty-{\rm sat}}$ such that $\beta \cdot \eta_S(c) = \eta_S(d)$, so we write $\beta = \frac{\eta_S(d)}{\eta_S(c)}$. For typographical reasons, whenever $S = \{ a\} \subseteq A$, we also write $A_a$ to denote $A\{ a^{-1}\}$.\\

Combining these concepts, we are able to describe the co-covering families of the smooth Zariski Grothendieck (pre)topology.\\

Let $\mathcal{C}$ be (some) skeleton of  $ {\mathcal{C}^{\infty}{\rm \bf Rng}_{\rm fp}}$. We first define the \textbf{smooth Groth\-endieck-Zariski pretopology} on $\mathcal{C}^{\rm op}$.\\

\textbf{Convention:} We say that a covering family of $A$, $\{ g_j: B_j \to A | j \in J \} \in {\rm Cov}\,(A)$ (or a co-covering family of ${\rm coCov}\,(A)$) \textbf{is generated by a family of $\mathcal{C}^{\infty}-$homomorphisms $\mathcal{F} = \{ f_i: A_i \to A | i \in I \}$} if, and only if $\{ g_j: B_j \to A | j \in J \}$ consists of all the $\mathcal{C}^{\infty}-$homomorphism with codomain $A$ which are isomorphic (in the comma category $\mathcal{C}^{\infty}{\rm \bf Rng}_{\rm fp}\downarrow A$) to some element of $\mathcal{F}$. We shall denote it by:
$$\{ g_j: B_j \to A | j \in J \} \stackrel{\cdot}{=} \langle \{ f_i: A_i \to A | i \in I \}\rangle = \langle \mathcal{F} \rangle$$

The covering families, in our case, will be ``generated'' by the dual (opposite) of the co-covering families defined as follows:\\

Let:

$$\begin{array}{cccc}
{\rm coCov}: & {\rm Obj}\,(\mathcal{C}^{\infty}{\rm \bf Rng}_{\rm fp}) & \rightarrow & \wp(\wp({\rm Mor}(\mathcal{C}^{\infty}{\rm \bf Rng}_{\rm fp}))) \\
& A & \mapsto & {\rm coCov}\,(A)
\end{array}$$

For every  $n-$tuple of elements of $A$, $(a_1, \cdots, a_n) \in A \times A \times \cdots, \times A$, $n \in \mathbb{N}$, such that $\langle a_1, a_2, \cdots, a_n\rangle = A$, a family of $\mathcal{C}^{\infty}-$homomorphisms $k_i : A \to B_i$ such that:

\begin{itemize}
\item[(i)]{For every $i \in \{1, \cdots,n \}$, $k_i(a_i) \in {B_i}^{\times}$;}
\item[(ii)]{For every $i \in \{ 1, \cdots, n\}$, if $k_i(a) = 0$ for some $a \in A$, there is some $s_i \in \{ a_i\}^{\infty-{\rm sat}}$ such that $a \cdot s_i = 0$;}
\item[(iii)]{For every $b \in B_i$ there are $c \in \{ a_i\}^{\infty-{\rm sat}}$ and $d \in A$ such that $b \cdot k_i(c) = k_i(d)$.}
\end{itemize}

will be a co-covering family of the $\mathcal{C}^{\infty}-$ring $A$, that is:

\begin{multline*}{\rm coCov}\,(A) = \{ \mathcal{F} \subseteq \cup_{B \in {\rm Obj}\,(\mathcal{C})}{\rm Hom}_{\mathcal{C}^{\infty}{\rm \bf Rng}_{\rm fp}}(A,B)|\mathcal{F} = \\
= \{ k_i: A \rightarrow B_i | (n \in \mathbb{N})\& (i \in \{1,\cdots,n\}) \& \,\, k_i \,\, \mbox{satisfies}\,\, {\rm (i)}, \,\, {\rm (ii)}\,\, \mbox{and}\,\, {\rm (iii)}\} \}
\end{multline*}

In other words,

\begin{multline*}{\rm coCov}\,(A) = \\
= \{ \mathcal{F} \subseteq \cup_{B \in {\rm Obj}\,(\mathcal{C})}{\rm Hom}_{\mathcal{C}^{\infty}-{\rm \bf Rng}_{\rm fp}}| \mathcal{F} = \langle \eta_{a_i}: A \to A\{{a_i}^{-1}\} | i \in \{ 1, \cdots, n\} \rangle\}
\end{multline*}

In terms of diagrams, the ``generators'' of the co-covering families are\\ given by the following arrows:

$$\xymatrix{
A\{ {a_1}^{-1}\} & A\{ {a_2}^{-1}\} & \cdots & A\{ a_{n-1}^{-1}\} & A\{ {a_n}^{-1}\}\\
 & & A \ar@/^/[ull]^{\eta_{a_1}} \ar[ul]_{\eta_{a_2}} \ar[ur]^{\eta_{a_{n-1}}} \ar@/_/[urr]_{\eta_{a_n}}& &
}$$

Given a finitely presented $\mathcal{C}^{\infty}-$ring, a \textbf{covering family} for $A$ in $\mathcal{C}^{\infty}-{\rm \bf Rng}_{{\rm fp}}^{{\rm op}}$ is given by:

$${\rm Cov}\,(A) = \{f^{{\rm op}}: B \to A | (f: A \to B) \in {\rm coCov}\,(A)\}$$

The following technical results are needed in the sequel:

\begin{lemma}\label{imp} Let $A$ be a $\mathcal{C}^{\infty}-$ring and let $a \in A$ and $\beta \in A\{a^{-1}\}$. Since $\beta = \eta^A_a(b)/\eta^A_a(c)$ for some $b \in A$ and $c \in \{a\}^{\infty-sat}$, then there is a unique $\mathcal{C}^{\infty}-$isomorphism of $A$-algebras:

$$\theta_{ab}: (A\{a^{-1}\})\{\beta^{-1}\} \stackrel{\cong}{\longrightarrow} A\{(a \cdot b)^{-1}\}$$

I.e., $\theta_{ab}: (A\{a^{-1}\})\{\beta^{-1}\} \to A\{(a\cdot b)^{-1}\}$ is a $\mathcal{C}^{\infty}-$rings isomorphism such that the following diagram commutes:

$$\xymatrixcolsep{5pc}\xymatrix{
A \ar[r]^{\eta^A_a} \ar@/_/[drr]_{\eta^A_{a \cdot b}} & A\{a^{-1}\}  \ar[r]^{\eta^{A_a}_{\beta}} & (A\{a^{-1}\})\{\beta^{-1}\} \ar[d]^{\theta_{ab}}\\
  & & A\{(a\cdot b)^{-1}\}
 }$$

that is, $(\eta^A_{a \cdot b}: A \to A\{(a \cdot b)^{-1}\}) \cong (\eta^{A_a}_\beta\circ \eta^A_a : A \to (A\{a^{-1}\})\{\beta^{-1}\})$ in $A \downarrow \mathcal{C}^{\infty}{\rm \bf Rng}_{\rm fp}$. Hence:

$$\langle \{ \eta_{a \cdot b}: A \to A\{(a \cdot b)^{-1}\}|a,b \in A \}\rangle = \langle \{\eta_{\beta}\circ \eta_a : A \to (A\{a^{-1}\})\{\beta^{-1}\}| a,b \in A \}\rangle.$$
\end{lemma}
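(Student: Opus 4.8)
The plan is to identify the composite $\eta^{A_a}_\beta \circ \eta^A_a : A \to (A\{a^{-1}\})\{\beta^{-1}\}$ as a $\mathcal{C}^{\infty}-$ring of fractions of $A$ with respect to $\{a \cdot b\}$, and then to invoke the essential uniqueness of rings of fractions (\textbf{Definition \ref{nacional}}) to produce the desired $A-$algebra isomorphism $\theta_{ab}$ together with its uniqueness. The decisive preliminary observation is that, since $c \in \{a\}^{\infty-{\rm sat}} = (\eta^A_a)^{\dashv}[A\{a^{-1}\}^{\times}]$, the element $\eta^A_a(c)$ is \emph{already} invertible in $A\{a^{-1}\}$; hence $\beta = \eta^A_a(b)/\eta^A_a(c)$ differs from $\eta^A_a(b)$ by a unit of $A\{a^{-1}\}$, so that inverting $\beta$ in $A\{a^{-1}\}$ is the same as inverting $\eta^A_a(b)$.

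First I would check condition (i) of \textbf{Definition \ref{nacional}} for $S = \{a \cdot b\}$, i.e. that $\eta^{A_a}_\beta \circ \eta^A_a$ inverts $a \cdot b$. The map $\eta^A_a$ sends $a$ to a unit, while $\eta^{A_a}_\beta$ sends $\beta$ to a unit and preserves the unit $\eta^A_a(c)$, so that $\eta^{A_a}_\beta(\eta^A_a(b)) = \eta^{A_a}_\beta(\beta)\cdot \eta^{A_a}_\beta(\eta^A_a(c))$ is a unit; consequently $\eta^{A_a}_\beta(\eta^A_a(a\cdot b))$ is a product of units and therefore invertible.

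Next I would verify the universal property (condition (ii)). Given any $\mathcal{C}^{\infty}-$homomorphism $g : A \to C$ with $g(a\cdot b) \in C^{\times}$, commutativity forces $g(a), g(b) \in C^{\times}$. Since $g(a)$ is a unit, the universal property of $\eta^A_a$ yields a unique $g_1 : A\{a^{-1}\} \to C$ with $g_1 \circ \eta^A_a = g$; then $g_1(\beta) = g(b)\cdot g(c)^{-1}$ is a unit of $C$ because $g(b)$ is a unit and $g(c)$ is automatically a unit (as $\eta^A_a(c)$ is). The universal property of $\eta^{A_a}_\beta$ then provides a unique $g_2 : (A\{a^{-1}\})\{\beta^{-1}\} \to C$ with $g_2\circ \eta^{A_a}_\beta = g_1$, hence a unique factorization of $g$ through $\eta^{A_a}_\beta \circ \eta^A_a$. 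I would also record that this factorization does not depend on the chosen representatives $b, c$ of $\beta$, which follows from the uniqueness clauses just used. This exhibits $\eta^{A_a}_\beta \circ \eta^A_a$ as the $\mathcal{C}^{\infty}-$ring of fractions $A \to A\{(a\cdot b)^{-1}\}$, so by uniqueness up to unique $A-$algebra isomorphism there is a unique $\theta_{ab}$ making the displayed triangle commute; the stated equality of generated families is then immediate from the resulting isomorphism in $A \downarrow \mathcal{C}^{\infty}{\rm \bf Rng}_{\rm fp}$.

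The main obstacle is precisely the step ``inverting $\beta$ equals inverting $b$'', that is, correctly using the $\mathcal{C}^{\infty}-$saturation to guarantee that $g(c)$ is forced to be a unit and that $g_1(\beta)$ is a unit exactly when $g(b)$ is; everything else amounts to bookkeeping of the two universal properties. I would keep the argument at the level of the universal property of \textbf{Definition \ref{nacional}}, invoking \textbf{Proposition \ref{nacional-pr}} only to justify that the expression $\beta = \eta^A_a(b)/\eta^A_a(c)$ is well defined in the first place.
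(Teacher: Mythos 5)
The paper states Lemma \ref{imp} without any proof, so there is nothing to compare your argument against; judged on its own, your proof is correct and is the natural one. You correctly isolate the only delicate point --- that $c\in\{a\}^{\infty-{\rm sat}}$ makes $\eta^A_a(c)$ a unit of $A\{a^{-1}\}$, so any homomorphism out of $A\{a^{-1}\}$ sends it to a unit and hence inverting $\beta$ amounts to inverting $\eta^A_a(b)$ --- and the rest (checking that $\eta^{A_a}_\beta\circ\eta^A_a$ inverts $a\cdot b$, chaining the two universal properties to get existence and uniqueness of the factorization, and invoking uniqueness of the ring of fractions from \textbf{Definition \ref{nacional}} to obtain the unique $A$-algebra isomorphism $\theta_{ab}$) is exactly the bookkeeping you describe. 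The final assertion about the generated covering families then follows from the paper's convention that generation is taken up to isomorphism in the comma category.
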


\begin{lemma}\label{par}Let $A$ and $B$ be two $\mathcal{C}^{\infty}-$rings and $S \subseteq A$ and $f: A \to B$ a $\mathcal{C}^{\infty}-$homomorphism. By the universal property of $\eta_S : A \to A\{ S^{-1}\}$ we have a unique $\mathcal{C}^{\infty}-$homomorphism $f_S : A\{ S^{-1}\} \to B\{ f[S]^{-1}\}$ such that the following square commutes:
$$\xymatrixcolsep{5pc}\xymatrix{
A \ar[r]^{\eta_S} \ar[d]_{f} & A\{ S^{-1}\} \ar[d]^{\exists ! f_S}\\
B \ar[r]_{\eta_{f[S]}} & B\{ f[S]^{-1}\}
}.$$

The diagram:
$$\xymatrixcolsep{5pc}\xymatrix{
B \ar[dr]^{\eta_{f[S]}}& \\
 & B \{ f[S]^{-1}\}\\
A\{ S^{-1}\} \ar[ur]_{f_{S}}
}$$
is a pushout of the diagram:
$$\xymatrixcolsep{5pc}\xymatrix{
  & B\\
A \ar[ur]^{f} \ar[dr]_{\eta_S} & \\
   & A\{ S^{-1}\}
}$$
\end{lemma}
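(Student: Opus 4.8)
The plan is to verify directly the universal property of the pushout for the cocone $(\eta_{f[S]}, f_S)$ over the span $B \xleftarrow{f} A \xrightarrow{\eta_S} A\{S^{-1}\}$. The square displayed in the statement commutes by the very construction of $f_S$ (it is, by definition, the unique arrow produced by the universal property of $\eta_S$ applied to $\eta_{f[S]}\circ f$), so the proposed cocone is well defined and commutes. What remains is to show that it is \emph{initial} among all cocones over this span in $\mathcal{C}^{\infty}{\rm \bf Rng}$. Throughout I will use the elementary fact that every $\mathcal{C}^{\infty}$-homomorphism preserves units: since $+,\cdot,0,1$ are among the interpreted smooth operations, a homomorphism $h$ sends a unit $a$ with inverse $a^{-1}$ to $h(a)$, which has inverse $h(a^{-1})$.

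So first I would fix an arbitrary $\mathcal{C}^{\infty}$-ring $C$ together with $\mathcal{C}^{\infty}$-homomorphisms $u: B \to C$ and $v: A\{S^{-1}\} \to C$ satisfying $u \circ f = v \circ \eta_S$, and aim to produce a unique $w: B\{f[S]^{-1}\} \to C$ with $w \circ \eta_{f[S]} = u$ and $w \circ f_S = v$. The crux is to factor $u$ through $\eta_{f[S]}$, which by the universal property of $\eta_{f[S]}$ (Definition \ref{nacional}(ii)) requires checking that $u[f[S]] \subseteq C^{\times}$. For $s \in S$ I compute $u(f(s)) = (u\circ f)(s) = (v\circ \eta_S)(s) = v(\eta_S(s))$; by property (i) of Definition \ref{nacional} we have $\eta_S(s) \in (A\{S^{-1}\})^{\times}$, and since $v$ preserves units it follows that $u(f(s)) \in C^{\times}$. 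Hence the universal property of $\eta_{f[S]}$ yields a \emph{unique} $w$ with $w \circ \eta_{f[S]} = u$.

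It then remains to verify the other triangle, $w \circ f_S = v$, and for this I would invoke the uniqueness clause of the universal property of $\eta_S$. Both $w\circ f_S$ and $v$ are arrows $A\{S^{-1}\} \to C$, and precomposing with $\eta_S$ gives, on the one hand, $(w\circ f_S)\circ \eta_S = w\circ(\eta_{f[S]}\circ f) = (w\circ \eta_{f[S]})\circ f = u\circ f$, and on the other hand $v\circ \eta_S = u\circ f$ by hypothesis. Thus both restrict along $\eta_S$ to the same map $g := u\circ f: A \to C$, which satisfies $g[S]\subseteq C^{\times}$ (as $g(s) = v(\eta_S(s))$ is a unit, exactly as above); the uniqueness part of Definition \ref{nacional}(ii) then forces $w\circ f_S = v$. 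Finally, uniqueness of $w$ is immediate from the uniqueness already used in its construction: any $w'$ with $w'\circ \eta_{f[S]} = u$ must equal $w$ by the universal property of $\eta_{f[S]}$.

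The proof is thus essentially a matter of marshalling the two universal properties in the right order; I do not anticipate a genuine obstacle, only the bookkeeping of checking the invertibility hypotheses $u[f[S]]\subseteq C^{\times}$ and $g[S]\subseteq C^{\times}$ that license each application. The one point demanding care is to not conflate the two uniqueness statements (that of $\eta_{f[S]}$, used for existence and uniqueness of $w$, and that of $\eta_S$, used to pin down the second triangle). I would avoid the alternative, more computational route through the explicit presentation $A\{S^{-1}\} = \tfrac{A\{x_s \mid s\in S\}}{I_S}$ together with the coproduct $\otimes_{\infty}$ describing the pushout, since it obscures the argument behind the bifunctoriality established in Lemma \ref{imp} and the defining universal properties, whereas the abstract verification above is both shorter and conceptually transparent.
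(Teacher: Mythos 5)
Your argument is correct and complete: the paper states Lemma \ref{par} without any proof, so there is nothing to compare against, and your direct verification of the pushout's universal property is the standard argument that fills this gap. The two invertibility checks you flag ($u[f[S]]\subseteq C^{\times}$ via $u(f(s))=v(\eta_S(s))$ with $\eta_S(s)$ a unit, and likewise for $g=u\circ f$) are exactly the points that license the two applications of Definition \ref{nacional}(ii), and your use of the uniqueness clause of $\eta_S$ to force $w\circ f_S=v$ is sound; the observation that uniqueness of $w$ already follows from the first triangle alone is a correct (and slightly stronger) conclusion than the pushout property requires.
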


\begin{remark}
Note that if $A$ is a finitely presented $\mathcal{C}^{\infty}$-ring, i.e. $A \cong \dfrac{\mathcal{C}^{\infty}(\mathbb{R}^n)}{\langle f_1, \cdots, f_k\rangle}$,  and $b \in A$, then $A\{b^{-1}\}$ is a finitely presented $\mathcal{C}^{\infty}$-ring:
$$A\{b^{-1}\} \cong \dfrac{A\{x\}}{(bx-1)} \cong \dfrac{\mathcal{C}^{\infty}(\mathbb{R}^{n+1})}{\langle f_1\circ \pi_1, \cdots, f_k\circ\pi_1, (bx-1) \circ \pi_2\rangle}$$

\end{remark}

\begin{remark}
Note that if $A$ is a finitely presented $\mathcal{C}^{\infty}$-ring, i.e. $A \cong \dfrac{\mathcal{C}^{\infty}(\mathbb{R}^n)}{\langle f_1, \cdots, f_k\rangle}$,  and $b \in A$, then $A\{b^{-1}\}$ is a finitely presented $\mathcal{C}^{\infty}$-ring:
$$A\{b^{-1}\} \cong \dfrac{A\{x\}}{(bx-1)} \cong \dfrac{\mathcal{C}^{\infty}(\mathbb{R}^{n+1})}{\langle f_1\circ \pi_1, \cdots, f_k\circ\pi_1, (bx-1) \circ \pi_2\rangle}$$
\end{remark}

\begin{definition}Let $A$ be a  $\mathcal{C}^{\infty}-$ring and let $I \subseteq A$ be an ideal. The \textbf{$\mathcal{C}^{\infty}-$radical ideal of $I$} is given by:
$$\sqrt[\infty]{I}=\left\{ a \in A | \left( \frac{A}{I}\right)\{ {a+I}^{-1}\} \cong 0 \right\}$$
\end{definition}

\begin{definition}Given a $\mathcal{C}^{\infty}-$ring $A$, the smooth Zariski spectrum of $A$ is given by the set:
$${\rm Spec}^{\infty}(A)=\{ \mathfrak{p} \in {\rm Spec}\,(A) | \sqrt[\infty]{\mathfrak{p}}=\mathfrak{p} \}$$
together with the topology generated by the following sub-basic sets:
$$D^{\infty}(a)=\{ \mathfrak{p} \in {\rm Spec}^{\infty}(A)| a \notin \mathfrak{p}\}$$
for each $a \in A$.
\end{definition}

\begin{remark}\label{str-shf}Given a $\mathcal{C}^{\infty}-$ring, $A$, we can form a $\mathcal{C}^{\infty}-$(locally) ringed space, $({\rm Spec}^{\infty}(A), \Sigma_A)$, where:
$$\Sigma_A: {\rm Open}({\rm Spec}^{\infty}(A), \subseteq)^{\rm op} \rightarrow \mathcal{C}^{\infty}{\rm \bf Rng}$$
is the (essentially) unique presheaf such that for every $a \in A$ we have:
$$\Sigma_A(D^{\infty}(a))\cong A\{ a^{-1}\}.$$

As proved in \textbf{Proposition 1.6} of \cite{rings2}, $\Sigma_A$ is a sheaf of $\mathcal{C}^{\infty}-$rings whose stalks are local $\mathcal{C}^{\infty}-$rings. More precisely, for each $\mathfrak{p} \in {\rm Spec}^{\infty}(A)$,
$$A_{\mathfrak{p}} = \varprojlim_{a \notin \mathfrak{p}} A\{ a^{-1}\} \cong A\{ {A \setminus \mathfrak{p}}^{-1}\}.$$
\end{remark}

\begin{proposition}(cf. \cite{tese})Let $A$ be a $\mathcal{C}^{\infty}-$ring. A family $\{a_1, \cdots, a_n \} \subseteq A$ is such that $\langle \{a_1, \cdots, a_n \} \rangle = A$ if, and only if:
$${\rm Spec}^{\infty}(A) = \bigcup_{i=1}^{n}D^{\infty}(a_i)$$

\end{proposition}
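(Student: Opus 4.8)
The plan is to recast both sides of the equivalence in terms of which $\mathcal{C}^{\infty}-$radical primes contain $I := \langle a_1, \cdots, a_n\rangle$, and then to control the (non)emptiness of that collection by a Zorn-type argument. Unwinding the definitions, the condition ${\rm Spec}^{\infty}(A) = \bigcup_{i=1}^{n} D^{\infty}(a_i)$ holds exactly when no $\mathfrak{p} \in {\rm Spec}^{\infty}(A)$ omits all the $a_i$, i.e. when no $\mathcal{C}^{\infty}-$radical prime of $A$ contains $I$. For the forward implication, suppose $I = A$. Since every prime ideal is proper, no prime ideal --- in particular no $\mathcal{C}^{\infty}-$radical prime --- can contain $1 \in I$; hence every $\mathfrak{p} \in {\rm Spec}^{\infty}(A)$ omits some $a_i$, giving ${\rm Spec}^{\infty}(A) \subseteq \bigcup_{i=1}^n D^{\infty}(a_i)$, while the reverse inclusion is automatic.

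For the converse I would argue by contraposition: assuming $I \neq A$, I would exhibit a $\mathcal{C}^{\infty}-$radical prime containing $I$. As $I$ is a proper ideal of the underlying commutative unital ring of $A$, Zorn's Lemma provides a maximal (proper) ideal $\mathfrak{m} \supseteq I$, and such an $\mathfrak{m}$ is automatically prime. The crux is then the following lemma: \emph{every maximal ideal $\mathfrak{m}$ of a $\mathcal{C}^{\infty}-$ring is $\mathcal{C}^{\infty}-$radical}, i.e. $\sqrt[\infty]{\mathfrak{m}} = \mathfrak{m}$. Granting this, we have $\mathfrak{m} \in {\rm Spec}^{\infty}(A)$ with $a_i \in I \subseteq \mathfrak{m}$ for all $i$, so $\mathfrak{m} \notin \bigcup_{i=1}^n D^{\infty}(a_i)$, contradicting the hypothesis; therefore $I = A$.

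To prove the lemma I would check both inclusions. The inclusion $\mathfrak{m} \subseteq \sqrt[\infty]{\mathfrak{m}}$ always holds: if $a \in \mathfrak{m}$ then $a + \mathfrak{m} = 0$ in $A/\mathfrak{m}$, and inverting $0$ forces $1 = 0$, whence $(A/\mathfrak{m})\{(a+\mathfrak{m})^{-1}\} \cong 0$. For the reverse inclusion, take $a \notin \mathfrak{m}$. Since $\mathfrak{m}$ is maximal, the underlying ring of $A/\mathfrak{m}$ is a nonzero field, so $a + \mathfrak{m}$ is already invertible there; by the universal property in Definition \ref{nacional}, the identity of $A/\mathfrak{m}$ satisfies the defining property of the ring of fractions, so $(A/\mathfrak{m})\{(a+\mathfrak{m})^{-1}\} \cong A/\mathfrak{m} \neq 0$. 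Hence $a \notin \sqrt[\infty]{\mathfrak{m}}$, giving $\sqrt[\infty]{\mathfrak{m}} = \mathfrak{m}$.

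The main obstacle is precisely this lemma, and within it the observation that $\mathcal{C}^{\infty}-$localizing at an already-invertible element returns the ring up to isomorphism --- this is what guarantees that the $\mathcal{C}^{\infty}-$localization of the field $A/\mathfrak{m}$ does not collapse to $0$, a phenomenon one must rule out because in the $\mathcal{C}^{\infty}-$setting localizations can collapse in ways that have no classical analogue. Everything else reduces either to a direct reading of the definitions of $D^{\infty}$ and ${\rm Spec}^{\infty}$ or to the classical existence of maximal ideals.
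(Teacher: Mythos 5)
Your argument is correct. Note that the paper itself gives no proof of this proposition --- it is stated with a bare citation to \cite{tese} --- so there is no in-text argument to compare yours against; what you have written is the natural proof one would expect to find in that reference. The forward direction is the trivial observation that a proper prime cannot contain $1$. For the converse you correctly isolate the only point with genuine content, namely that a maximal ideal $\mathfrak{m}$ of the underlying ring is automatically $\mathcal{C}^{\infty}-$radical, so that the classical Zorn argument actually produces a point of ${\rm Spec}^{\infty}(A)$ rather than merely of ${\rm Spec}(A)$. Your verification of that lemma is sound: the inclusion $\mathfrak{m}\subseteq\sqrt[\infty]{\mathfrak{m}}$ is the collapse of the localization at $0$, and the reverse inclusion uses that (i) congruences of $\mathcal{C}^{\infty}-$rings are classified by ring-theoretic ideals (so $A/\mathfrak{m}$ is a $\mathcal{C}^{\infty}-$ring whose underlying ring is a field), and (ii) the universal property of Definition~\ref{nacional} is satisfied by the identity map when the element to be inverted is already a unit, so $(A/\mathfrak{m})\{(a+\mathfrak{m})^{-1}\}\cong A/\mathfrak{m}\neq 0$. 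That last step is exactly the right thing to highlight, since in the $\mathcal{C}^{\infty}-$setting one cannot simply assert nontriviality of a localization by exhibiting a ``fraction.'' An alternative route, had you wished to avoid the explicit computation, is to use that $\sqrt[\infty]{I}$ is always a proper ideal containing $I$ when $I$ is proper, so $\sqrt[\infty]{\mathfrak{m}}=\mathfrak{m}$ by maximality; but your direct argument is self-contained and uses only what the paper defines.
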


\begin{proposition} ${\rm Cov}$ is a Grothendieck pretopology on $\mathcal{C}^{\infty}{\rm \bf Rng}_{{\rm fp}}^{{\rm op}}$.

\end{proposition}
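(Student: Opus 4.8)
The plan is to verify directly the three defining axioms of a Grothendieck pretopology — isomorphism, stability under pullback, and transitivity — for the assignment $A \mapsto {\rm Cov}(A)$ on $\mathcal{C}^{\infty}{\rm \bf Rng}_{{\rm fp}}^{{\rm op}}$. Since, by the Convention above, covering families in the opposite category are exactly the duals of the co-covering families, it is equivalent and far more convenient to dualize each axiom and check it inside $\mathcal{C}^{\infty}{\rm \bf Rng}_{{\rm fp}}$ itself, where a co-covering of $A$ is a family generated, in $A \downarrow \mathcal{C}^{\infty}{\rm \bf Rng}_{{\rm fp}}$, by localizations $\{\eta_{a_i}: A \to A\{a_i^{-1}\}\}_{i=1}^{n}$ with $\langle a_1, \dots, a_n\rangle = A$. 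The key translation tool I would use repeatedly is the spectral characterization established just above, namely that $\langle a_1, \dots, a_n\rangle = A$ if and only if ${\rm Spec}^{\infty}(A) = \bigcup_{i=1}^{n} D^{\infty}(a_i)$; this converts the algebraic ``unit-ideal'' condition into a covering of the smooth Zariski spectrum, where it can be manipulated geometrically.

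The isomorphism axiom is immediate: given an isomorphism $f: A \to B$, the element $1 \in A^{\times}$ generates the unit ideal on its own and $\eta_1: A \to A\{1^{-1}\}$ is an isomorphism, so $\{f\}$ is isomorphic over $A$ to the singleton co-covering $\{\eta_1\}$ and hence $\{f^{\rm op}\} \in {\rm Cov}(A)$. For the pullback axiom, note that a pullback in $\mathcal{C}^{\infty}{\rm \bf Rng}_{{\rm fp}}^{{\rm op}}$ is a pushout in $\mathcal{C}^{\infty}{\rm \bf Rng}_{{\rm fp}}$: given a co-covering $\langle \eta_{a_i}: A \to A\{a_i^{-1}\}\rangle$ and an arbitrary $\mathcal{C}^{\infty}$-homomorphism $h: A \to C$, Lemma \ref{par} identifies the pushout of $\eta_{a_i}$ along $h$ with the localization $\eta_{h(a_i)}: C \to C\{h(a_i)^{-1}\}$. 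Writing $1 = \sum_i r_i a_i$ and applying $h$ yields $1 = \sum_i h(r_i) h(a_i)$, so $\langle h(a_1), \dots, h(a_n)\rangle = C$ and $\langle \eta_{h(a_i)}\rangle$ is a co-covering of $C$, as required.

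The transitivity axiom is where the real work lies, and I expect it to be the main obstacle. Suppose $\langle \eta_{a_i}: A \to A\{a_i^{-1}\}\rangle_{i=1}^{n}$ is a co-covering of $A$ and, for each $i$, $\langle \eta_{b_{ij}}: A\{a_i^{-1}\} \to (A\{a_i^{-1}\})\{b_{ij}^{-1}\}\rangle_{j}$ is a co-covering of $A\{a_i^{-1}\}$, so that $\langle a_1, \dots, a_n\rangle = A$ and $\langle b_{i1}, \dots, b_{i,m_i}\rangle = A\{a_i^{-1}\}$ for every $i$. Using Lemma \ref{imp}, I would write each $b_{ij} = \eta_{a_i}(b'_{ij})/\eta_{a_i}(c'_{ij})$ with $b'_{ij} \in A$ and $c'_{ij} \in \{a_i\}^{\infty-{\rm sat}}$, and identify the composite $\eta_{b_{ij}} \circ \eta_{a_i}$, up to isomorphism over $A$, with the single localization $\eta_{a_i \cdot b'_{ij}}: A \to A\{(a_i \cdot b'_{ij})^{-1}\}$. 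Thus the composite family is generated by $\{\eta_{a_i \cdot b'_{ij}}\}_{i,j}$, and everything reduces to proving the unit-ideal identity $\langle a_i \cdot b'_{ij} : i,j\rangle = A$.

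This identity is the crux, and I would establish it geometrically. The homomorphism $\eta_{a_i}$ induces a homeomorphism of ${\rm Spec}^{\infty}(A\{a_i^{-1}\})$ onto $D^{\infty}(a_i) \subseteq {\rm Spec}^{\infty}(A)$ (implicit in Remark \ref{str-shf} and standard from \cite{rings2},\cite{tese}), under which the basic open $D^{\infty}(b_{ij})$ — unchanged when $b_{ij}$ is replaced by $\eta_{a_i}(b'_{ij})$, since $c'_{ij}$ is a unit — corresponds to $D^{\infty}(a_i) \cap D^{\infty}(b'_{ij}) = D^{\infty}(a_i \cdot b'_{ij})$. Because $\langle b_{ij}\rangle_{j} = A\{a_i^{-1}\}$, the preceding proposition gives $\bigcup_{j} D^{\infty}(b_{ij}) = {\rm Spec}^{\infty}(A\{a_i^{-1}\})$, which transports to $\bigcup_{j} D^{\infty}(a_i \cdot b'_{ij}) = D^{\infty}(a_i)$; taking the union over $i$ and using $\bigcup_i D^{\infty}(a_i) = {\rm Spec}^{\infty}(A)$ gives $\bigcup_{i,j} D^{\infty}(a_i \cdot b'_{ij}) = {\rm Spec}^{\infty}(A)$. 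By the spectral characterization once more, $\langle a_i \cdot b'_{ij} : i,j\rangle = A$, so the composite family is a co-covering and transitivity holds. The delicate point demanding care is precisely the compatibility of this basic-open translation with Lemma \ref{imp} — that $D^{\infty}$ of the iterated localizing element is carried to the correct basic open of ${\rm Spec}^{\infty}(A)$ — which is the smooth analog of the classical computation $D(a) \cap D(b') = D(a \cdot b')$ underlying transitivity for the ordinary Zariski site.
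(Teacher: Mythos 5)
Your proposal is correct and follows essentially the same route as the paper: the isomorphism axiom via the localization at $1$, stability via Lemma \ref{par} together with $1=\sum h(r_i)h(a_i)$, and transitivity via Lemma \ref{imp} combined with the spectral characterization $\langle a_1,\dots,a_n\rangle = A \iff {\rm Spec}^{\infty}(A)=\bigcup_i D^{\infty}(a_i)$ and the homeomorphism ${\rm Spec}^{\infty}(A\{a_i^{-1}\})\approx D^{\infty}(a_i)$. The only cosmetic difference is in the isomorphism axiom, where the paper verifies conditions (i)--(iii) for $\varphi$ directly rather than invoking the comma-category isomorphism with $\eta_1$.
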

\begin{proof} Let $A$ be any finitely presented $\mathcal{C}^{\infty}-$ring.

{\bf Isomorphism axiom:}\\
 Whenever $\varphi^{{\rm op}} : A' \to A$ is a $\mathcal{C}^{\infty}-$isomorphism, the family $\{ \varphi^{{\rm op}}: A' \to A\} \in {\rm Cov}\,(A)$.\\

Note that $\varphi^{{\rm op}}: A' \to A$ is a $\mathcal{C}^{\infty}-$isomorphism in $\mathcal{C}^{\infty}{\rm \bf Rng}_{{\rm fp}}^{{\rm op}}$ if, and only if $\varphi: A \to A'$ is a $\mathcal{C}^{\infty}-$isomorphism in $\mathcal{C}^{\infty}{\rm \bf Rng}_{{\rm fp}}$. Thus, we are going to show that if $\varphi: A \to A'$ is a $\mathcal{C}^{\infty}-$isomorphism, then $\{ \varphi: A \to A' \} \in {\rm coCov}\,(A)$. \\

Indeed, $1_A \in A$ is such that $\langle 1_A \rangle = A$, so the one element family  $\{ \eta_{1_A}: A \rightarrow A\{{1_A}^{-1}\} \}  \in {\rm coCov}\,(A)$.\\

Since $\varphi: A \to A'$ is a $\mathcal{C}^{\infty}-$isomorphism, $\varphi$ is, in particular, a $\mathcal{C}^{\infty}-$homomor\-phism, and we have, for every $s \in \{ 1_A\}^{\infty-{\rm sat}} = A^{\times}$, $\varphi(s) \in {A'}^{\times}$. Also, if $\varphi(a)=0_{A'}$ for some $a \in A$, since $\ker \varphi = \{ 0_A\}$ (for $\varphi$ is injective), $a = 0_A$, so for every $s_i \in \{ 1_{A'}\}^{{\rm \infty-sat}}$ (in particular, there is some such $s_i$) one has $a \cdot s_i = 0_A$. \\

Finally, given any $a' \in A'$, since $\varphi$ is surjective, there is some element $a \in A$ such that $\varphi(a) = a'$. Since $1_A \in \{ 1_{A'}\}^{\infty-{\rm sat}}$ and $a = \frac{a}{1_A}$, we have:
$$a' = \varphi(a)\cdot \varphi(1_A)^{-1}.$$

Since $\varphi: A \to A'$ satisfies (i), (ii) and (iii), the one-element family $\{ \varphi: A \to A'\}$ co-covers $A$, so $\{ \varphi^{{\rm op}}: A' \to A\} \in {\rm Cov}\,(A)$.\\

{\bf Stability axiom:}\\
Now we are going to show that our definition of ${\rm Cov}$ is stable under pullbacks, that is:\\
 If $(a_1, \cdots, a_n) \in A \times A \times \cdots \times A$ is a $n-$tuple such that $\langle a_1, \cdots, a_n \rangle = A$ and $\{ \eta_{a_i}^{\rm op}: A\{{a_i}^{-1}\}  \rightarrow A \}_{i = 1, \cdots, n}$ generates a covering family for $A$, then given a $\mathcal{C}^{\infty}-$rings homomorphism $g: A \to B$, since $\eta_{g(a_i)}\circ g$ is such that $(\eta_{g(a_i)}\circ g)(a_i) \in B\{g(a_i)^{-1}\}^{\times}$, by the universal property of $\eta_{a_i}: A \to A\{{a_i}^{-1}\}$ there is a unique $\mathcal{C}^{\infty}-$homomorphism:

$$g': A\{{a_i}^{-1}\} \to B\{g(a_i)^{-1}\}$$

such that the following diagram commutes:

$$\xymatrixcolsep{5pc}\xymatrix{
A \ar[r]^{\eta_{a_i}} \ar[d]_{g} & A\{{a_i}^{-1}\} \ar[d]^{g'}\\
B \ar[r]_{\eta_{g(a_i)}} & B\{g(a_i)^{-1}\}
 }$$

By \textbf{Lemma \ref{par}}, the diagram above is a pushout, so

$$\xymatrixcolsep{5pc}\xymatrix{
B\{g(a_i)^{-1}\} \ar[d]_{{g'}^{{\rm op}}} \ar[r]^{(\eta_{g(a_i)})^{\rm op}} & B \ar[d]^{{g}^{{\rm op}}}\\
A\{{a_i}^{-1}\} \ar[r]_{{(\eta_{a_i})}^{{\rm op}}} & A
}$$

is a pullback in $\mathcal{C}^{\infty}{\rm \bf Rng}_{{\rm fp}}^{{\rm op}}$.\\

In order to show that the family $\{ (\eta_{g(a_i)})^{\rm op}: B\{g(a_i)^{-1}\} \to B | i=1, \cdots, n\}$ belongs to ${\rm Cov}\,(B)$, it suffices to show that $\{\eta_{g(a_i)}: B \to B\{g(a_i)^{-1}\} | i=1, \cdots, n \}$ belongs to ${\rm coCov}\,(B)$.\\

Since $a_1, \cdots, a_n$ are such that $\langle a_1, \cdots, a_n\rangle = A$, there are some $\lambda_1, \cdots, \lambda_n \in A$ such that:

$$1_A = \sum_{i=1}^{n}\lambda_i \cdot a_i$$

Since $g: A \to B$ is a $\mathcal{C}^{\infty}-$homomorphism, we have:

$$1_B = g(1_A) = \sum_{i=1}^{n} g(\lambda_i)\cdot g(a_i),$$

thus $\langle g(a_1), \cdots, g(a_n)\rangle = B$. Also, since for every $i=1, \cdots, n$, $\eta_{g(a_i)}: B \to B\{g(a_i)^{-1}\}$ is a $\mathcal{C}^{\infty}-$ring of fractions, it follows that $\{\eta_{g(a_i)}: B \to B\{g(a_i)^{-1}\} | i=1, \cdots, n \} \in {\rm coCov}\,(B)$, hence:

$$\{ (\eta_{g(a_i)})^{\rm op}: B\{g(a_i)^{-1}\} \to B | i=1, \cdots, n\} \in {\rm Cov}\,(B).$$

{\bf Transitivity axiom:} \\
If $\{ \eta^{A}_{a_i}: A \to A\{{a_i}^{-1}\} | i=1, \cdots, n \}$ generates a co-covering family of $A$ and for each $i$, $\{\eta^{A_i}_{\beta_{ij}}: A\{{a_i}^{-1}\} \to (A\{{a_i}^{-1}\})\{\beta_{ij}^{-1}\} | j \in \{ 1, \cdots, n_i\}\}$ generates a co-covering family of $A\{{a_i}^{-1}\}$, then:\\

$$\{ \eta^{A_{a_i}}_{\beta_{ij}}\circ \eta^A_{a_i}: A \rightarrow (A\{{a_i}^{-1}\})\{\beta_{ij}^{-1}\} | i \in \{ 1, \cdots, n\} \& j \in \{1, \cdots, n_i \}\}$$

generates a co-covering family of $A$.\\

To show that the transitive axiom holds we will need the following technical result on ``Smooth Commutative Algebra'':


 If for each $i \leq n$ and each $\beta_{ij} \in A\{a_i^{-1}\}$, $j \leq n_i$, we write $\beta_{ij} = \eta_a(b_{ij})/\eta_a(c_{ij})$, with $c_{ij} \in \{a_i\}^{\infty-{\rm sat}}$, then by \textbf{Lemma \ref{imp}}, to show that:



$$\{ \eta^{A_{a_i}}_{\beta_{ij}}\circ \eta^A_{a_i}: A \rightarrow (A\{{a_i}^{-1}\})\{\beta_{ij}^{-1}\} | i \in \{ 1, \cdots, n\} \& j \in \{1, \cdots, n_i \}\}$$

generates a co-covering family of  $A$ amounts to show that:

$$\{ \eta^A_{a_i \cdot b_{ij}} : A \to A\{({a_i}\cdot {b_{ij}})^{-1}\} | (i \in \{ 1, \cdots n\}) \& (j \in \{1, \cdots, n_i\})\}$$

does.\\

By hypothesis, $\{ \eta_{a_i}^{\rm op}: A\{ {a_i}^{-1}\} \to A | i \in \{ 1, \cdots, n\}\}\}$ generates a covering family of $A$, so:

$$1_A \in \langle a_1, \cdots, a_n\rangle$$

or, \underline{equivalently}:

$${\rm Spec}^{\infty}\,(A) = \bigcup_{i=1}^{n}D_A^{\infty}(a_i)$$

Since for every $i \in \{ 1, \cdots, n\}$ we have a canonical homeomorphism:

$$\begin{array}{cccc}
    \varphi: & {\rm Spec}^{\infty}\,(A\{ {a_i}^{-1}\}) & \rightarrow & D^{\infty}\,(a_i) \\
     & \mathfrak{p} & \mapsto & \eta_{a_i}^{-1}[\mathfrak{p}]
  \end{array}$$

Also by hypothesis, for any $i \in \{ 1, \cdots, n\}$, $\{ (\eta^A_{a_ib_{ij}})^{\rm op}: A\{ {a_i \cdot b_{ij}}^{-1}\} \rightarrow A\{ {a_i}^{-1}\} | j \in \{ 1, \cdots, n_i\}  \}$ generates a covering family of $A\{ {a_i}^{-1}\}$, so:

$${\rm Spec}^{\infty}(A\{ {a_i}^{-1}\}) = \bigcup_{j=1}^{n_i} D_{A\{a_i^-1\}}^{\infty}(\beta_{ij}) \approx \bigcup_{j=1}^{n_i} D_A^{\infty}(a_i) \cap  D_A^{\infty}(b_{ij}) \approx \bigcup_{j=1}^{n_i} D_A^{\infty}(a_i \cdot b_{ij})$$

Putting all together we obtain:

$${\rm Spec}^{\infty}\,(A) = \bigcup_{i=1}^{n} D_A^{\infty}\,(a_i) \approx \bigcup_{i=1}^{n} {\rm Spec}^{\infty}\,(A\{ {a_i}^{-1}\}) \approx \bigcup_{i=1}^{n}\left( \bigcup_{j=1}^{n_i} D_A^{\infty}\,(a_i \cdot b_{ij})\right)$$

thus,

$${\rm Spec}^{\infty}\,(A) = \bigcup_{\substack{i \leq n\\ j \leq n_i}} D_A^{\infty}\,(a_i \cdot b_{ij})$$

but this is equivalent to

$$1_A \in \langle \{a_i \cdot b_{ij}: i \leq n, j \leq n_i\} \rangle$$

and the transitivity is proved.\\

Thus, ${\rm Cov}$ defines a Grothendieck pretopology on $\mathcal{C}^{\infty}{\rm \bf Rng}_{\rm fp}^{\rm op}$. We have:

$$J_{{\rm Cov}}: {\rm Obj}\,(\mathcal{C}^{\infty}{\rm \bf Rng}_{\rm fp}) \to \wp(\wp({\rm Mor}\,( \mathcal{C}^{\infty}{\rm \bf Rng}_{\rm fp})))$$

given by:

$$J_{{\rm Cov}}\,(A) := \{ \overleftarrow{S} \subseteq \cup_{B \in {\rm Obj}\,(\mathcal{C}^{\infty}{\rm \bf Rng}_{\rm fp})} {\rm Hom}_{\mathcal{C}^{\infty}{\rm \bf Rng}_{\rm fp}}\,(B,A) | S \in {\rm Cov}\,(A) \}$$

turning $(\mathcal{C}^{\infty}{\rm \bf Rng}_{\rm fp}^{\rm op}, J_{{\rm Cov}})$ into a small site - the so called smooth Zariski site.\\
\end{proof}

\begin{proposition} \label{sheaf-struc} Let $I$ be a finite set (say $I = \{1, \cdots, n\}$) and let
$\{ A\{a_i^{-1}\} \overset{\eta_{a_i}}\to A | i \in I\}$ be a
$J_{{\rm Cov}}$-covering of $A$ in ${\mathcal{C}^{\infty}{\rm \bf Rng}_{\rm
fp}}^{\rm op}$, then the diagram
below is an equalizer in the category of $\mathcal{C}^{\infty}$-rings:

(E) $$ A \to \prod_{i \in I} A\{a_i^{-1}\} \rightrightarrows
\prod_{i,j \in I} A\{(a_i.a_j)^{-1}\}$$
\end{proposition}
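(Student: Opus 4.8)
The plan is to recognize the diagram (E) as the sheaf--gluing (descent) condition for the structure sheaf $\Sigma_A$ of Remark~\ref{str-shf} with respect to the finite cover of ${\rm Spec}^\infty(A)$ determined by $a_1,\dots,a_n$, and then to read off the equalizer from the fact (recorded in Remark~\ref{str-shf}, i.e. Proposition~1.6 of \cite{rings2}) that $\Sigma_A$ is a sheaf of $\mathcal{C}^\infty$-rings.

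First I would rephrase the covering hypothesis geometrically. That $\{\eta_{a_i}^{\rm op}\}_{i\in I}$ covers $A$ in the smooth Zariski site means exactly $\langle a_1,\dots,a_n\rangle = A$, which by the spectral characterization established above is equivalent to ${\rm Spec}^\infty(A)=\bigcup_{i\in I}D^\infty(a_i)$. Writing $X:={\rm Spec}^\infty(A)$, the $D^\infty(a_i)$ thus form a finite cover of the whole space by basic opens, and since each $\mathfrak{p}$ is prime we have $D^\infty(a_i)\cap D^\infty(a_j)=D^\infty(a_i\cdot a_j)$ for all $i,j$.

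Next I would install the identifications of the three terms of (E) with the terms of the sheaf (descent) diagram of $\Sigma_A$ for this cover. Taking $a=1_A$ in $\Sigma_A(D^\infty(a))\cong A\{a^{-1}\}$ gives $\Sigma_A(X)=\Sigma_A(D^\infty(1_A))\cong A\{1_A^{-1}\}\cong A$; likewise $\Sigma_A(D^\infty(a_i))\cong A\{a_i^{-1}\}$ and $\Sigma_A(D^\infty(a_i)\cap D^\infty(a_j))\cong A\{(a_i\cdot a_j)^{-1}\}$. Under these, the restriction map $\Sigma_A(D^\infty(a_i))\to\Sigma_A(D^\infty(a_i)\cap D^\infty(a_j))$ becomes the canonical localization map $A\{a_i^{-1}\}\to A\{(a_i\cdot a_j)^{-1}\}$: this is precisely where Lemma~\ref{imp} enters, giving $(A\{a_i^{-1}\})\{\eta_{a_i}(a_j)^{-1}\}\cong A\{(a_i\cdot a_j)^{-1}\}$, so that the two families of localizations are identified with the two parallel arrows $d^0,d^1$ of that diagram. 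Consequently, applying the sheaf axiom for $\Sigma_A$ to the cover $X=\bigcup_i D^\infty(a_i)$, the row
\[ \Sigma_A(X)\longrightarrow\prod_{i}\Sigma_A(D^\infty(a_i))\rightrightarrows\prod_{i,j}\Sigma_A\bigl(D^\infty(a_i)\cap D^\infty(a_j)\bigr) \]
is an equalizer, and transporting along the above isomorphisms yields exactly (E). Because the forgetful functor $\mathcal{C}^\infty{\rm \bf Rng}\to{\rm \bf Set}$ creates limits, it is enough to have this as an equalizer of underlying sets, which is the ordinary set-valued sheaf condition; the $\mathcal{C}^\infty$-ring structure is then automatic.

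The step I expect to be the main obstacle is this middle identification of the maps: one must check that the restriction homomorphisms of $\Sigma_A$ really coincide, under the chosen isomorphisms, with the canonical localization maps, and hence with $d^0,d^1$, which rests on combining Lemma~\ref{imp} with the essential uniqueness of $\Sigma_A$ on basic opens. It is worth recording the separatedness half directly, as it is short and avoids appealing to the full sheaf machinery: if $\eta_{a_i}(a)=0$ for every $i$, then by condition (ii) of Proposition~\ref{nacional-pr} there is, for each $i$, some $c_i\in\{a_i\}^{\infty-{\rm sat}}$ with $a\cdot c_i=0$; since $c_i$ becomes invertible in $A\{a_i^{-1}\}$ we get $D^\infty(a_i)\subseteq D^\infty(c_i)$, whence $\bigcup_i D^\infty(c_i)=X$ and therefore $\langle c_1,\dots,c_n\rangle=A$, so $1_A=\sum_i\mu_i c_i$ for some $\mu_i\in A$ and $a=\sum_i\mu_i(a\cdot c_i)=0$. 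The remaining, genuinely harder, surjectivity onto the equalizer is the gluing assertion, and it is exactly this that the sheaf property of $\Sigma_A$ supplies.
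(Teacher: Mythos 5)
Your proof is correct and follows essentially the same route as the paper's: both reduce (E) to the sheaf condition for the structure sheaf $\Sigma_A$ on the cover ${\rm Spec}^\infty(A)=\bigcup_i D^\infty(a_i)$ and then identify the terms via $\Sigma_A(D^\infty(b))\cong A\{b^{-1}\}$ and $D^\infty(a_i)\cap D^\infty(a_j)=D^\infty(a_i\cdot a_j)$. Your added care about matching the restriction maps with the canonical localization maps, and the direct verification of separatedness, are welcome refinements but do not change the argument.
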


\begin{proof}
By hypothesis, $A = \langle \{a_i | i \in I\}\rangle$, or
equivalently, ${\rm Spec}^{\infty}(A) = D^\infty(1) = \bigcup_{i \in I}
D^\infty(a_i)$.

Since the affine $\mathcal{C}^{\infty}$-(locally) ringed space of $A$, $({\rm Spec}^{\infty}(A),\Sigma_A)$, given in \textbf{Remark \ref{str-shf}},
is in particular a sheaf of $\mathcal{C}^{\infty}$-rings, then the diagram below is an equalizer in the category of $\mathcal{C}^{\infty}$-rings:

$$ \xymatrix{\Sigma_A(D^\infty(1)) \ar[r]& \displaystyle\prod_{i \in I} \Sigma_A(D^\infty(a_i))
\ar@<1ex>[r] \ar@<-1ex>[r] & \displaystyle\prod_{i,j \in I} \Sigma_A(D^\infty(a_i)
\cap D^\infty(a_j))}$$

As $D^\infty(a_i) \cap D^\infty(a_j) = D^\infty(a_i.a_j)$ and
$\Sigma_A(D^\infty(b)) \cong A\{b^{-1}\}$, we have that the diagram
of $\mathcal{C}^{\infty}$-rings below is an equalizer:

$$A \overset{\cong}\to A\{1^{-1}\} \to \prod_{i \in I}
A\{a_i^{-1}\} \rightrightarrows \prod_{i,j \in I} A\{(a_i \cdot a_j)\}^{-1}$$

and this finishes the proof.
\end{proof}

We define the \textbf{smooth Grothendieck-Zariski topos}, that we denote by ${\cal Z}^{\infty}$, as the topos of sheaves over the smooth Zariski site:

$${\cal Z}^{\infty} = {\rm Sh}\,(\mathcal{C}^{\rm op}, J_{{\rm Cov}}),$$

where $\mathcal{C}$ is a skeleton of the category of all finitely presented $\mathcal{C}^{\infty}-$rings, $\mathcal{C}^{\infty}{\rm \bf Rng}_{\rm f.p.}$.


\begin{remark} \label{struct-re}
The forgetfull functor $\mathcal{O} : \mathcal{C}^{\infty}{\rm \bf Rng}_{\rm fp} \to {\rm \bf Sets}$ is
called the
\underline{structure sheaf} of the Grothendieck-Zariski smooth topos.

This is actually a sheaf of sets since if $\{\eta_{a_i} : A \to
A\{a_i^{-1}\} | i \leq n\}$ is a smooth Zariski co-covering
(i.e. $A = \langle \{a_1, \cdots, a_n\} \rangle$), then the diagram of sets
below must be an equalizer,
$$ A \to \prod_{i \in I} A\{a_i^{-1}\} \rightrightarrows \prod_{i,j \in
I} A\{(a_i.a_j)^{-1}\}$$
since it is indeed an equalizer of $\mathcal{C}^{\infty}$-rings and the forgetfull functor
$\mathcal{C}^{\infty}{\rm \bf Rng} \to {\rm \bf Sets}$ preserves limits.
\end{remark}

\begin{proposition}\label{papel}The following rectangle is a pushout:\\

$$\xymatrixcolsep{5pc}\xymatrix{
A  \ar@{-->}[dr]^{\eta_{a_i \cdot a_j}} \ar[r]^{\eta_{a_i}} \ar[d]_{\eta_{a_j}} & A\{ {a_i}^{-1}\} \ar[d]\\
A\{ {a_j}^{-1}\} \ar[r] & A\{(a_i \cdot a_j)^{-1}\}
}$$
\end{proposition}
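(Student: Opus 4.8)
The plan is to obtain this pushout by combining the two localization lemmas just established, \textbf{Lemma \ref{par}} and \textbf{Lemma \ref{imp}}, rather than checking the universal property of the pushout by hand.

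First I would instantiate \textbf{Lemma \ref{par}} with the $\mathcal{C}^{\infty}$-homomorphism $f = \eta_{a_j} : A \to A\{a_j^{-1}\}$ and the singleton $S = \{a_i\} \subseteq A$. Since $f[S] = \{\eta_{a_j}(a_i)\}$, the lemma yields at once that the square
$$\xymatrixcolsep{5pc}\xymatrix{
A \ar[r]^{\eta_{a_i}} \ar[d]_{\eta_{a_j}} & A\{a_i^{-1}\} \ar[d]^{(\eta_{a_j})_{\{a_i\}}}\\
A\{a_j^{-1}\} \ar[r]_-{\eta_{\eta_{a_j}(a_i)}} & (A\{a_j^{-1}\})\{\eta_{a_j}(a_i)^{-1}\}
}$$
is a pushout in $\mathcal{C}^{\infty}{\rm \bf Rng}_{\rm fp}$ (all rings in sight being finitely presented by the preceding remark).

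Next I would identify the lower-right vertex of this square with $A\{(a_i \cdot a_j)^{-1}\}$. Setting $\beta := \eta_{a_j}(a_i) = \eta_{a_j}^A(a_i)/\eta_{a_j}^A(1)$ — so that, in the notation of \textbf{Lemma \ref{imp}}, we take $a = a_j$, $b = a_i$ and $c = 1 \in \{a_j\}^{\infty-{\rm sat}}$ — that lemma supplies a unique isomorphism of $A$-algebras
$$\theta : (A\{a_j^{-1}\})\{\beta^{-1}\} \xrightarrow{\ \cong\ } A\{(a_j \cdot a_i)^{-1}\} = A\{(a_i \cdot a_j)^{-1}\},$$
the last equality being commutativity. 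Moreover the commuting triangle of \textbf{Lemma \ref{imp}} states precisely that $\theta \circ \eta_{\beta} \circ \eta_{a_j} = \eta_{a_i \cdot a_j}$, so $\theta$ carries the diagonal of the square above onto the dashed arrow $\eta_{a_i \cdot a_j}$ of the statement.

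Finally, since a pushout square is preserved when its terminal vertex is post-composed with an isomorphism, transporting the square along $\theta$ produces the rectangle in the statement. The only point requiring attention is the bookkeeping of the edge maps: one must confirm that the two transported edges $\theta \circ (\eta_{a_j})_{\{a_i\}}$ and $\theta \circ \eta_{\beta}$ are the \emph{canonical} localization maps $A\{a_i^{-1}\} \to A\{(a_i\cdot a_j)^{-1}\}$ and $A\{a_j^{-1}\} \to A\{(a_i\cdot a_j)^{-1}\}$ appearing (unlabelled) in the diagram, and not merely abstractly isomorphic to them. This is where the argument is cleanest: both composites are $A$-algebra morphisms, and since $A\{a_i^{-1}\}$ (resp. $A\{a_j^{-1}\}$) is initial among $A$-algebras inverting $a_i$ (resp. $a_j$), while $A\{(a_i\cdot a_j)^{-1}\}$ inverts both, there is a unique such $A$-algebra map; hence each transported edge is forced to be the canonical one. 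Thus no explicit computation is needed, and the rectangle is a pushout.
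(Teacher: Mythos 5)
Your argument is correct. Note that the paper states Proposition \ref{papel} with no proof at all, so there is nothing to compare against; what you have written is exactly the derivation the surrounding text implicitly relies on, namely that the proposition is a formal consequence of the two lemmas just established. Instantiating Lemma \ref{par} with $f=\eta_{a_j}$ and $S=\{a_i\}$ does produce a pushout with apex $(A\{a_j^{-1}\})\{\eta_{a_j}(a_i)^{-1}\}$, and Lemma \ref{imp} with $a=a_j$, $b=a_i$, $c=1\in\{a_j\}^{\infty-{\rm sat}}$ identifies that apex with $A\{(a_i\cdot a_j)^{-1}\}$ compatibly with the diagonal $\eta_{a_i\cdot a_j}$. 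Your final bookkeeping step is also sound and is the one point a careless reader might skip: since $\eta_{a_i\cdot a_j}(a_i)$ divides the unit $\eta_{a_i\cdot a_j}(a_i a_j)$ and is therefore itself a unit, the universal property of $\eta_{a_i}$ (Definition \ref{nacional}) forces any $A$-algebra map $A\{a_i^{-1}\}\to A\{(a_i\cdot a_j)^{-1}\}$ to be the canonical one, and symmetrically for $a_j$; hence transporting the pushout along the isomorphism $\theta$ yields precisely the square in the statement.
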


\begin{theorem}\label{subcan} The smooth Grothendieck-Zariski topology $J_{\rm Cov}$ on ${\cal Z}^{\infty}$ is
subcanonical, that is, for every finitely presented $\mathcal{C}^{\infty}-$ring
$B$, the representable functor:

$$\begin{array}{cccc}
{\rm Hom}_{\cal C}\,(-, B) : & {\cal C}^{op} & \rightarrow
& {\rm \bf Set}\\
& A & \mapsto & {\rm Hom}_{\cal C}\,(A,B)\\
&(\xymatrix{A_1 \ar[r]^f & A_2}) & \mapsto & (\xymatrix{{\rm
Hom}_{{\cal C}}\,(A_2,B) \ar[r]^{- \circ f} &
{\rm Hom}_{{\cal C}}\,(A_1,B))}
\end{array}$$
is a sheaf (of sets).
\end{theorem}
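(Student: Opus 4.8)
The plan is to verify the sheaf axiom only on the families that generate $J_{\rm Cov}$, and then invoke the standard comparison between a Grothendieck pretopology and the topology it generates: a presheaf is a $J_{\rm Cov}$-sheaf as soon as it satisfies the equalizer (matching-family) condition for every covering family of the generating pretopology ${\rm Cov}$, provided the covering arrows admit pullbacks in the site (cf. \cite{ShvLog}). Since $J_{\rm Cov}$ is by construction generated by ${\rm Cov}$, and since the requisite pullbacks exist in $\mathcal{C}^{\infty}{\rm \bf Rng}_{\rm fp}^{\rm op}$ — these are exactly the objects $A\{(a_i\cdot a_j)^{-1}\}$ exhibited as pullbacks of $\eta_{a_i}^{\rm op}$ and $\eta_{a_j}^{\rm op}$ by \textbf{Proposition \ref{papel}} — it suffices to treat a single basic cover $\{\eta_{a_i}^{\rm op}: A\{a_i^{-1}\}\to A\}_{i\in I}$ with $\langle a_1,\dots,a_n\rangle = A$ and $I$ finite.

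For such a cover the relevant fibre products are $A\{a_i^{-1}\}\times_A A\{a_j^{-1}\}\cong A\{(a_i\cdot a_j)^{-1}\}$ by \textbf{Proposition \ref{papel}}, and all three layers of objects are finitely presented $\mathcal{C}^{\infty}$-rings (by the remark that a localization $A\{b^{-1}\}$ of a finitely presented $\mathcal{C}^{\infty}$-ring is again finitely presented). Thus the sheaf condition for the representable functor determined by $B$ unwinds to the statement that
$$ {\rm Hom}_{\mathcal{C}}(B,A) \longrightarrow \prod_{i\in I} {\rm Hom}_{\mathcal{C}}(B,A\{a_i^{-1}\}) \rightrightarrows \prod_{i,j\in I} {\rm Hom}_{\mathcal{C}}(B,A\{(a_i\cdot a_j)^{-1}\}) $$
is an equalizer in ${\rm \bf Set}$, where the parallel maps are postcomposition with the two projections of the cover and the leftmost map is postcomposition with the $\eta_{a_i}$.

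The key step is then immediate from \textbf{Proposition \ref{sheaf-struc}}: that proposition furnishes the equalizer $(E)$
$$ A \longrightarrow \prod_{i\in I} A\{a_i^{-1}\} \rightrightarrows \prod_{i,j\in I} A\{(a_i\cdot a_j)^{-1}\} $$
in the category of $\mathcal{C}^{\infty}$-rings. The representable functor ${\rm Hom}_{\mathcal{C}^{\infty}{\rm \bf Rng}}(B,-)$ preserves all limits, in particular equalizers and the finite products indexed by $I$ and $I\times I$, so applying it to $(E)$ yields precisely the equalizer of sets displayed above; here one uses that $\mathcal{C}$ is a full subcategory of $\mathcal{C}^{\infty}{\rm \bf Rng}$, whence ${\rm Hom}_{\mathcal{C}^{\infty}{\rm \bf Rng}}(B,X)={\rm Hom}_{\mathcal{C}}(B,X)$ for every finitely presented $X$. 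This gives the sheaf condition on every basic cover, and hence the subcanonicity of $J_{\rm Cov}$.

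I expect the only genuinely delicate point to be the reduction in the first paragraph: one must be careful that the matching-family formulation over the generating pretopology is equivalent to the sieve-theoretic sheaf condition for the generated topology, which requires both the existence of the pullbacks of covering arrows (supplied by \textbf{Proposition \ref{papel}}) and the verification that compatibility of a family over the generators propagates to compatibility over the whole generated sieve. Once this is in place, no further smooth-algebraic input is needed beyond \textbf{Proposition \ref{sheaf-struc}} and the fact that representable functors preserve limits; the argument is otherwise a purely formal consequence of ``$(E)$ is an equalizer.''
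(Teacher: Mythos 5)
Your proposal is correct and follows essentially the same route as the paper's proof: identify the fibre products of a basic cover as $A\{(a_i\cdot a_j)^{-1}\}$ via \textbf{Proposition \ref{papel}}, then apply the limit-preserving functor ${\rm Hom}_{\mathcal{C}^{\infty}{\rm \bf Rng}_{\rm fp}}(B,-)$ to the equalizer $(E)$ of \textbf{Proposition \ref{sheaf-struc}}. The only difference is that you make explicit the (standard) reduction from the sieve-theoretic sheaf condition for the generated topology $J_{\rm Cov}$ to the matching-family condition over the generating pretopology, a point the paper passes over silently.
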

\begin{proof}

Let $I$ be a finite set (lets say $I = \{1, \cdots, n\}$) and let
$\{ A\{a_i^{-1}\} \overset{\eta_{a_i}}\to A | i \in I\}$ be a
${\cal Z}^\infty$-covering of $A$ in ${\cal C}$.

Recall, from \textbf{Proposition \ref{papel}}, that for every $i,j \in I$, the following rectangle is a pushout in $\mathcal{C}^{\infty}{\rm \bf Rng}$:

$$\xymatrixcolsep{5pc}\xymatrix{
A  \ar@{-->}[dr]^{\eta_{a_i \cdot a_j}} \ar[r]^{\eta_{a_i}} \ar[d]_{\eta_{a_j}} & A\{ {a_i}^{-1}\} \ar[d]\\
A\{ {a_j}^{-1}\} \ar[r] & A\{(a_i \cdot a_j)^{-1}\}
}$$

We must prove that:

(I)  $$ {\cal C}(A,B) \to \prod_{i \in I} {\cal C}(A\{a_i^{-1}\},B)
\rightrightarrows
\prod_{i,j \in I} {\cal C}(A\{(a_i.a_j)^{-1}\},B)$$

is an equalizer diagram of sets and functions.

Since ${\cal C} = {\mathcal{C}^{\infty}{\rm \bf Rng}_{\rm fp}}^{\rm op}$, this
amounts to prove that

(II) $${\mathcal{C}^{\infty}{\rm \bf Rng}_{\rm fp}}(B,A) \to \prod_{i \in
I} {\mathcal{C}^{\infty}{\rm \bf Rng}_{\rm fp}}(B,A\{a_i^{-1}\})
\rightrightarrows \prod_{i,j \in I} {\mathcal{C}^{\infty}{\rm \bf Rng}_{\rm
fp}}(B,A\{(a_i.a_j)^{-1}\})$$

is an equalizer diagram of sets and functions.

As Hom functors preserve products, the diagram (II) is isomorphic to

(III) $${\mathcal{C}^{\infty}{\rm \bf Rng}_{\rm fp}}(B,A) \to
{\mathcal{C}^{\infty}{\rm \bf Rng}_{\rm fp}}(B,\prod_{i \in
I}A\{a_i^{-1}\})
\rightrightarrows {\mathcal{C}^{\infty}{\rm \bf Rng}_{\rm
fp}}(B,\prod_{i,j \in I} A\{(a_i.a_j)^{-1}\})$$

But this is an equalizer diagram of sets and functions since the Hom
functor ${\mathcal{C}^{\infty}{\rm \bf Rng}_{\rm fp}}(B, -)$ preserves
equalizers and
the diagram

(E) $$ A \to \prod_{i \in I} A\{a_i^{-1}\} \rightrightarrows
\prod_{i,j \in I} A\{(a_i.a_j)^{-1}\}$$

is an equalizer in the category of $\mathcal{C}^{\infty}$-rings, by {\bf
Proposition \ref{sheaf-struc}}.

Thus the Grothendieck topology $J_{\rm Cov}$ of the smooth Zariski site is subcanonical.
\end{proof}

\vspace{0.5cm}



Now we show that the topos of sheaves on the smooth Zariski site, that we have just described, is  the classifying topos of the theory of the $\mathcal{C}^{\infty}-$local rings.\\

In order to define a ``local $\mathcal{C}^{\infty}-$ring object'' in a topos, we use - as motivation - the Mitchell-B\'{e}nabou language.  We define a local $\mathcal{C}^{\infty}-$ring object in a topos $\mathcal{E}$ as follows: it is a $\mathcal{C}^{\infty}-$ring object $R$ in $\mathcal{E}$ such that the (geometric) formula:

$$(\forall a \in R)((\exists b \in R)(a \cdot b = 1)\vee (\exists b \in R)((1-a)\cdot b = 1))$$

is valid. \\

By definition, this means that the union of the subobjects:

$$\{ a \in R | \exists b \in R (a \cdot b =1)\} \rightarrowtail R,$$
$$\{ a \in R | \exists b \in R ((1-a) \cdot b =1)\} \rightarrowtail R$$
of $R$ is all of $R$. Equivalently, consider the two subobjects of the product $R \times R$ defined by:

\begin{equation}\label{two}
\begin{cases}
U = \{ (a,b) \in R \times R | a \cdot b = 1\} \rightarrowtail R \times R\\
V = \{ (a,b) \in R \times R | (1-a)\cdot b = 1\} \rightarrowtail R \times R
\end{cases}
\end{equation}

The $\mathcal{C}^{\infty}-$ring object $R$ is local if, and only if, the two composites $U \rightarrowtail R \times R \stackrel{\pi_1}{\rightarrow} R$ and $V \rightarrowtail R \times R \stackrel{\pi_1}{\rightarrow} R$ form an epimorphic family in $\mathcal{E}$.\\

In {section  2}, we have observed that there is an equivalence between $\mathcal{C}^{\infty}-$ring objects $R$ in a topos $\mathcal{E}$ and left exact functors, $\mathcal{C}^{\infty}{\rm \bf Rng}_{{\rm fp}}^{{\rm op}} \to \mathcal{E}$. Explicitly, given such a left-exact functor $F$, the corresponding $\mathcal{C}^{\infty}-$ring object $R$ in $\mathcal{E}$ is $F(\mathcal{C}^{\infty}(\mathbb{R}))$. Conversely, given a $\mathcal{C}^{\infty}-$ring $R$ in $\mathcal{E}$, the corresponding functor:
$$\phi_R: \mathcal{C}^{\infty}{\rm \bf Rng}_{{\rm fp}}^{{\rm op}} \to \mathcal{E}$$
sends the finitely presented $\mathcal{C}^{\infty}-$ring $A = \dfrac{\mathcal{C}^{\infty}(\mathbb{R}^n)}{\langle p_1, \cdots, p_k \rangle}$ to the following equalizer in $\mathcal{E}$:

\begin{equation}\label{five}
\phi_R(A) \rightarrowtail \xymatrixcolsep{5pc}\xymatrix{R^{n} \ar[r]^{(p_1, \cdots, p_k)} \ar@<-1ex>[r]_{(0,\cdots, 0)} & R^k}
\end{equation}

This description readily yields the corresponding definition of $\phi_R$ on arrows.\\


The following lemma gives a condition for a $\mathcal{C}^{\infty}-$ring $R$ in a topos $\mathcal{E}$ to be local, phrased in terms of this corresponding functor $\phi_R$.

\begin{lemma}\label{l2}Let $\mathcal{E}$ be a topos, $R$ be a $\mathcal{C}^{\infty}-$ring object in $\mathcal{E}$, and let $\phi_R : \mathcal{C}^{\infty}{\rm \bf Rng}^{{\rm op}} \to \mathcal{E}$ be the corresponding left exact functor. The following are equivalent:
\begin{itemize}
  \item[(i)]{$R$ is a local $\mathcal{C}^{\infty}-$ring in $\mathcal{E}$;}
  \item[(ii)]{$\phi_R$ sends the pair of arrows in the category $\mathcal{C}^{\infty}{\rm \bf Rng}_{{\rm fp}}$:
  $$\xymatrix{
    & \mathcal{C}^{\infty}(\mathbb{R}) \ar[dl] \ar[dr] & \\
    \dfrac{\mathcal{C}^{\infty}(\mathbb{R})\{ x_f\}}{\langle x_f \cdot \iota_{\mathcal{C}^{\infty}(\mathbb{R})}(f) - 1\rangle} & & \dfrac{\mathcal{C}^{\infty}(\mathbb{R})\{ x_{1-f}\}}{\langle x_{1-f} \cdot \iota_{\mathcal{C}^{\infty}(\mathbb{R})}(1-f) - 1\rangle}
  }$$
  to an epimorphic family of two arrows in $\mathcal{E}$; }
  \item[(iii)]{For any finitely presented $\mathcal{C}^{\infty}-$ring $A$ and any elements $a_1, \cdots, a_n$ such that $\langle a_1, \cdots, a_n\rangle = A$, $\phi_R$ sends the family of arrows in $\mathcal{C}^{\infty}{\rm \bf Rng}_{{\rm fp}}$:

      $$\{ A \to A\{ a_i^{-1}\} | i =1, \cdots, n\}$$

      to an epimorphic family $\{ \phi_R(A\{ {a_i}^{-1}\}) \to \phi_R(A) | i =1, \cdots, n \}$ in $\mathcal{E}$.
      }
\end{itemize}
\end{lemma}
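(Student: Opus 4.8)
The plan is to prove the cycle $(i)\Rightarrow(iii)\Rightarrow(ii)\Rightarrow(i)$, after first recording the single computation that makes all three links transparent: a description of $\phi_R$ on a localization. Recall that $\phi_R\colon\mathcal{C}^{\infty}{\rm\bf Rng}_{\rm fp}^{\rm op}\to\mathcal{E}$ is left exact, so it carries finite colimits of $\mathcal{C}^{\infty}$-rings to finite limits in $\mathcal{E}$, and that $\phi_R(\mathcal{C}^{\infty}(\mathbb{R}))=R$. Given a finitely presented $A$ and $a\in A$, let $\widehat{a}\colon\mathcal{C}^{\infty}(\mathbb{R})\to A$ be the classifying map of $a$ (sending the generator $f$ to $a$). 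Applying \textbf{Lemma \ref{par}} to $\widehat{a}$ with $S=\{f\}$ exhibits $A\{a^{-1}\}$ as the pushout of $A\xleftarrow{\widehat{a}}\mathcal{C}^{\infty}(\mathbb{R})\xrightarrow{\eta_f}\mathcal{C}^{\infty}(\mathbb{R})\{f^{-1}\}$; hence $\phi_R$ turns this pushout into the pullback $\phi_R(A)\times_{R}\phi_R(\mathcal{C}^{\infty}(\mathbb{R})\{f^{-1}\})$. Since by \eqref{five} the object $\phi_R(\mathcal{C}^{\infty}(\mathbb{R})\{f^{-1}\})$ is the subobject $U=\{(u,v)\in R\times R\mid u\cdot v=1\}$ with structure map $(u,v)\mapsto u$ to $R$, this identifies $\phi_R(A\{a^{-1}\})$ with the subobject of $\phi_R(A)$ on which $a$ acts invertibly, and $\phi_R(\eta_a)$ with its inclusion into $\phi_R(A)$. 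Tracking which projection realizes each structure map is the fussiest computational point, but it is routine once set up.

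Granting this, $(ii)\Rightarrow(i)$ and $(iii)\Rightarrow(ii)$ are formal. For $(ii)\Rightarrow(i)$, specialise the description to $A=\mathcal{C}^{\infty}(\mathbb{R})$ with $a=f$ and with $a=1-f$: the two objects appearing in $(ii)$ are then exactly the subobjects $U$ and $V$ of \eqref{two}, and their structure maps to $R$ are the composites $U\rightarrowtail R\times R\xrightarrow{\pi_1}R$ and $V\rightarrowtail R\times R\xrightarrow{\pi_1}R$, so $(ii)$ is literally the definition of local $\mathcal{C}^{\infty}$-ring object recorded before the lemma. For $(iii)\Rightarrow(ii)$, observe that $f+(1-f)=1$ forces $\langle f,1-f\rangle=\mathcal{C}^{\infty}(\mathbb{R})$, so $\{f,1-f\}$ is a legitimate generating family and $(ii)$ is the instance of $(iii)$ taken at $A=\mathcal{C}^{\infty}(\mathbb{R})$ with this family.

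The real content is $(i)\Rightarrow(iii)$. Fix $A$ and generators $a_1,\dots,a_n$, so $\sum_i\lambda_i a_i=1$ for some $\lambda_i\in A$. Because inverting $\lambda_i a_i$ forces $a_i$ to be invertible, the invertibility subobject of $\lambda_i a_i$ factors through that of $a_i$; thus it suffices to cover $\phi_R(A)$ by the subobjects attached to $s_i:=\lambda_i a_i$, and we may assume $\sum_i s_i=1$. I would then argue in the Kripke--Joyal logic of $\mathcal{E}$, proving by induction on $n$ that ``$s_1+\dots+s_n=1$'' entails that the invertibility subobjects of $s_1,\dots,s_n$ form an epimorphic family over $\phi_R(A)$. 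The base case $n=2$, read through $s_2=1-s_1$, is exactly hypothesis $(i)$. For the step, put $t=s_1+\dots+s_{n-1}=1-s_n$; locality applied to $s_n$ covers $\phi_R(A)$ by the subobject where $s_n$ is invertible (done there) together with the subobject $W$ where $t$ is invertible. Over $W$ the inverse $t^{-1}$ exists and $\sum_{i<n}t^{-1}s_i=1$, so the inductive hypothesis applied internally over $W$ covers $W$ by the subobjects where $t^{-1}s_i$ --- equivalently $s_i$ --- is invertible. Composing these covers is legitimate because epimorphic families in a topos are stable under pullback and closed under composition, and the result is the required covering of $\phi_R(A)$, which translates back, via the identification of the first paragraph, into the epimorphic family $\{\phi_R(A\{a_i^{-1}\})\to\phi_R(A)\}$. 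I expect the principal obstacle to lie precisely in making this induction rigorous --- passing to the subobject $W$, forming $t^{-1}$ there, and invoking pullback-stability and transitivity of epimorphic families to glue the partial coverings --- rather than in any individual algebraic identity.
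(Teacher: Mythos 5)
Your proposal is correct and follows essentially the same route as the paper: the identification of $\phi_R(A\{a^{-1}\})$ as the pullback of the invertibility subobject $U\rightarrowtail R\times R\to R$ along the classifying map $\phi_R(A)\to R$ (the paper realizes this by sending the pushout squares of Lemma~\ref{par} to pullbacks), the observation that (i)$\Leftrightarrow$(ii) is definitional via $U$ and $V$, and an induction on $n$ splitting off $s_n$ against $t=1-s_n$ and gluing via pullback-stability and composition of epimorphic families, which is exactly the paper's $n=3$ illustration with $a_1$ against $a_2+a_3$. Your explicit reduction from a general generating family $\langle a_1,\dots,a_n\rangle=A$ to a partition of unity $\sum_i\lambda_ia_i=1$ is a detail the paper passes over silently, and is a welcome addition.
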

\begin{proof}
Ad $(i) \iff (ii)$: This follows immediately from the explicit description of the functor $\phi_R$. Let $A = \dfrac{\mathcal{C}^{\infty}(\mathbb{R})\{ x_f\}}{\langle x_f \cdot \iota_{\mathcal{C}^{\infty}(\mathbb{R})}(f) - 1\rangle}$ and $B= \dfrac{\mathcal{C}^{\infty}(\mathbb{R})\{ x_{1-f}\}}{\langle x_{1-f} \cdot \iota_{\mathcal{C}^{\infty}(\mathbb{R})}(1-f) - 1\rangle}$. Note that:
$$\phi_R \left( A\right) = \{ (a,b) \in R \times R | a \cdot b = 1 \}$$

To wit, $\phi_R$ sends $A = \dfrac{\mathcal{C}^{\infty}(\mathbb{R})\{ x_f\}}{\langle x_f \cdot \iota_{\mathcal{C}^{\infty}(\mathbb{R})}(f) - 1\rangle}$ to the equalizer:

$$\xymatrixcolsep{5pc}\xymatrix{
\phi_R \left( A \right) \,\, \ar@{>->}[r] & R\times R \ar[r]_{1 \circ !} \ar@<1ex>[r]^{x_f \cdot \iota_{\mathcal{C}^{\infty}(\mathbb{R})}(f)} & R
}$$

and $\phi_R$ sends $B$ to:

$$\xymatrixcolsep{8pc}\xymatrix{
\phi_R \left( B \right)\,\, \ar@{>->}[r] & R\times R \ar[r]_{1 \circ !} \ar@<1ex>[r]^{x_{1-f} \cdot \iota_{\mathcal{C}^{\infty}(\mathbb{R})}(1-f)} & R
}$$

The arrow $\mathcal{C}^{\infty}(\mathbb{R}) \to A$ is mapped into the composite $\alpha: \phi_R(A) \to R$ given by $\phi_R(A) \rightarrowtail R \times R \stackrel{\pi_1}{\rightarrow} R$, and the arrow $\mathcal{C}^{\infty}(\mathbb{R}) \to B$, is mapped into the composite $\beta: \phi_R(B) \to R$, given by  $\phi_R(B) \rightarrowtail R \times R \stackrel{\pi_1}{\rightarrow} R$.\\

By the definition of a local $\mathcal{C}^{\infty}-$ring, (i) is equivalent to (ii).\\

Ad (iii) $\Rightarrow$ (ii): is also clear, since (ii) is the special case of (iii) in which $A = \mathcal{C}^{\infty}(\mathbb{R})$ while $n=2$, $a_1 = f$ and $a_2 = 1 - f$.\\

Ad (ii) $\Rightarrow$ (iii): Assume that (ii) holds, and suppose we are given a finitely presented $\mathcal{C}^{\infty}-$ring $A$ and elements $a_1, \cdots, a_n \in A$ with $\sum_{i=1}^{n}a_i = 1$. This result is proved using induction. We are going to prove the:\\

\textbf{Case  $n=2$}.\\

In this case $a_2 = 1 - a_1$. We form the pushouts of $\mathcal{C}^{\infty}(\mathbb{R}) \rightarrow \dfrac{\mathcal{C}^{\infty}(\mathbb{R})\{ x_f\}}{\langle x_f \cdot \iota_{\mathcal{C}^{\infty}(\mathbb{R})}(f) - 1\rangle}$ and
$\mathcal{C}^{\infty}(\mathbb{R}) \rightarrow \dfrac{\mathcal{C}^{\infty}(\mathbb{R})\{ x_{1-f}\}}{\langle x_{1-f} \cdot \iota_{\mathcal{C}^{\infty}(\mathbb{R})}(1-f) - 1\rangle}$ along the map $\mathcal{C}^{\infty}(\mathbb{R}) \rightarrow A$ sending ${\rm id}_{\mathbb{R}}$ to $a_1$, as in:

$$\xymatrixcolsep{5pc}\xymatrix{
\dfrac{\mathcal{C}^{\infty}(\mathbb{R})\{ x_{1-f}\}}{\langle x_{1-f} \cdot \iota_{\mathcal{C}^{\infty}(\mathbb{R})}(1-f) - 1\rangle} \ar[d] & \mathcal{C}^{\infty}(\mathbb{R}) \ar[d]^{a_1} \ar[l] \ar[r] & \dfrac{\mathcal{C}^{\infty}(\mathbb{R})\{ x_f\}}{\langle x_f \cdot \iota_{\mathcal{C}^{\infty}(\mathbb{R})}(f) - 1\rangle} \ar[d]\\
A\{ (1-a_1)^{-1}\} & A \ar[l] \ar[r] & A\{ {a_1}^{-1}\}
}$$

giving the indicated $\mathcal{C}^{\infty}-$rings of fractions $A\{ (1-a_1)^{-1}\}$ or $A\{ {a_1}^{-1}\}$. These squares are pullbacks in $\mathcal{C}^{\infty}{\rm \bf Rng}_{\rm fp}^{\rm op}$, hence they are sent by the left-exact functor $\phi_R$ to pullbacks in $\mathcal{E}$, as in:

$$\xymatrix{
\phi_R\left(\dfrac{\mathcal{C}^{\infty}(\mathbb{R})\{ x_{1-f}\}}{\langle x_{1-f} \cdot \iota_{\mathcal{C}^{\infty}(\mathbb{R})}(1-f) - 1\rangle}\right)\ar[r]^(.8){\pi_1} & R & \ar[l]_(0.7){\pi_1} \phi_R\left( \dfrac{\mathcal{C}^{\infty}(\mathbb{R})\{ x_f\}}{\langle x_f \cdot \iota_{\mathcal{C}^{\infty}(\mathbb{R})}(f) - 1\rangle} \right)\\
\phi_R(A\{ (1-a_1)^{-1}\}) \ar[u] \ar[r] & \phi_R(A) \ar[u]& \phi_R(A\{ {a_1}^{-1}\}) \ar[l] \ar[u]
}$$

But by assumption
$$\phi_R\left( \dfrac{\mathcal{C}^{\infty}(\mathbb{R})\{ x_{1-f}\}}{\langle x_{1-f} \cdot \iota_{\mathcal{C}^{\infty}(\mathbb{R})}(1-f) - 1\rangle}\right) \to R$$ and
$$\phi_R\left(  \dfrac{\mathcal{C}^{\infty}(\mathbb{R})\{ x_f\}}{\langle x_f \cdot \iota_{\mathcal{C}^{\infty}(\mathbb{R})}(f) - 1\rangle} \right) \to R$$
form an epimorphic family in $\mathcal{E}$, and hence so does the pullback of this family. This proves (iii) for the case $n=2$.\\

The general case follows by induction. For instance, if $n =3$ and $a_1+a_2+a_3 = 1$, let $\beta \in A\{(a_2+a_3)^{-1}\}$ such that $\beta. \eta(a_2) + \beta. \eta(a_3) = 1$. Then, again by the case $n=2$, $\phi_R$ sends the three arrows in $\mathcal{C}^{\infty}{\rm \bf Rng}_{\rm fp}$
$$ A \to A\{a_1^{-1}\}$$
$$ A \to A\{a_2^{-1}\} \to  A\{(a_2+a_3)^{-1}\}\{(\beta. \eta(a_2))^{-1}\}$$
$$ A \to A\{a_3^{-1}\} \to  A\{(a_2+a_3)^{-1}\}\{(\beta. \eta(a_3))^{-1}\}$$
to an epimorphic family in ${\cal E}$. Thus $\phi_R$ also sends the family of canonical arrows $\{ A \to A\{a_i^{-1}\} : i =1,2,3\}$ to an epimorphic family in ${\cal E}$.
\end{proof}


\begin{theorem}\label{above}The smooth Grothendieck-Zariski topos $\mathcal{Z}^{\infty} = {\rm Sh}\,(\mathcal{C}^{\rm op}, J_{{\rm Cov}}),$ is a classifying topos for local $\mathcal{C}^{\infty}-$rings, i.e., for any Grothendieck topos $\mathcal{E}$, there is an equivalence of categories:
\begin{equation}
\label{seven}
{\rm Geom}\,(\mathcal{E}, \mathcal{Z}^{\infty}) \simeq \mathcal{C}^{\infty}{\rm LocRng}\,(\mathcal{E})
\end{equation}
where $\mathcal{C}^{\infty}{\rm  LocRng}\,(\mathcal{E})$ is the category of local $\mathcal{C}^{\infty}-$ring-objects in $\mathcal{E}$.\\

The universal local $\mathcal{C}^{\infty}-$ring is the structure sheaf $\mathcal{O}$ of the Grothendieck-Zariski smooth topos (see \textbf{Remark \ref{struct-re}}).
\end{theorem}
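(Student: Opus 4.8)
The plan is to deduce the theorem by composing three equivalences already at our disposal: the general site-theoretic description of a classifying topos recalled in Section~1~(IV), the equivalence between left-exact functors on $\mathcal{C}^{\infty}{\rm \bf Rng}_{\rm fp}^{\rm op}$ and $\mathcal{C}^{\infty}-$ring objects furnished by ${\rm ev}_{\mathcal{C}^{\infty}(\mathbb{R})}$ in Section~2, and the characterization of locality given in \textbf{Lemma~\ref{l2}}. First I would note that the underlying category of the smooth Zariski site, $\mathcal{C}^{\rm op} \simeq \mathcal{C}^{\infty}{\rm \bf Rng}_{\rm fp}^{\rm op}$, is a small category with all finite limits, and that $J_{\rm Cov}$ is a Grothendieck topology on it which is subcanonical by \textbf{Theorem~\ref{subcan}}. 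Hence the theorem recalled in Section~1~(IV) applies and yields, for each Grothendieck topos $\mathcal{E}$, an equivalence
$$ {\rm Geom}\,(\mathcal{E}, \mathcal{Z}^{\infty}) \overset{\simeq}{\longrightarrow} {\rm \bf Lex}_{J_{\rm Cov}}\,(\mathcal{C}^{\infty}{\rm \bf Rng}_{\rm fp}^{\rm op}, \mathcal{E}), $$
where ${\rm \bf Lex}_{J_{\rm Cov}}$ is the full subcategory of ${\rm \bf Lex}$ consisting of those left-exact functors which are $J_{\rm Cov}-$continuous, i.e. send every $J_{\rm Cov}-$covering family to an epimorphic family in $\mathcal{E}$. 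Since $J_{\rm Cov}$ is generated as a pretopology by the basic families $\{ A \to A\{ a_i^{-1}\} \}_{i=1}^{n}$ with $\langle a_1, \cdots, a_n\rangle = A$, continuity need only be tested on these generators.

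Next I would invoke the main theorem of Section~2, according to which evaluation at $\mathcal{C}^{\infty}(\mathbb{R})$ is an equivalence ${\rm \bf Lex}\,(\mathcal{C}^{\infty}{\rm \bf Rng}_{\rm fp}^{\rm op}, \mathcal{E}) \overset{\simeq}{\to} \underline{\mathcal{C}^{\infty}{\rm Rings}}\,(\mathcal{E})$ with quasi-inverse $R \mapsto \phi_R$, and restrict it to the full subcategories singled out above. By the reduction just noted together with the equivalence (i)$\iff$(iii) of \textbf{Lemma~\ref{l2}}, a left-exact functor $\phi_R$ is $J_{\rm Cov}-$continuous precisely when the corresponding $\mathcal{C}^{\infty}-$ring object $R = \phi_R(\mathcal{C}^{\infty}(\mathbb{R}))$ is local. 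Hence the equivalence of Section~2 restricts to
$$ {\rm \bf Lex}_{J_{\rm Cov}}\,(\mathcal{C}^{\infty}{\rm \bf Rng}_{\rm fp}^{\rm op}, \mathcal{E}) \overset{\simeq}{\longrightarrow} \mathcal{C}^{\infty}{\rm LocRng}\,(\mathcal{E}), $$
and composing with the equivalence of the previous paragraph produces \eqref{seven}, which is natural in $\mathcal{E}$ since both constituents are.

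It remains to identify the generic model. I would transport the universal $\mathcal{C}^{\infty}-$ring $R = \mathcal{C}^{\infty}{\rm \bf Rng}_{\rm fp}(\mathcal{C}^{\infty}(\mathbb{R}), -)$ of Section~2 along the sheafification $a : {\rm \bf Sets}^{\mathcal{C}^{\infty}{\rm \bf Rng}_{\rm fp}} \to \mathcal{Z}^{\infty}$. Because $J_{\rm Cov}$ is subcanonical the representable presheaves are already sheaves; and since $R$ is naturally isomorphic to the forgetful functor $\mathcal{O}$, which is a sheaf on the smooth Zariski site by \textbf{Remark~\ref{struct-re}}, the generic model is $\mathcal{O}$ itself. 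It is a local $\mathcal{C}^{\infty}-$ring object because, under the equivalence just established, it corresponds to the identity geometric morphism of $\mathcal{Z}^{\infty}$.

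The step demanding the most care is the matching performed above: one must be sure that the abstract notion of $J_{\rm Cov}-$continuity, which is \emph{a priori} a condition on all covering sieves, genuinely collapses to the behaviour on the pretopology generators $\{ A \to A\{ a_i^{-1}\}\}$ appearing in \textbf{Lemma~\ref{l2}}\,(iii). Once that identification is secured, the theorem is a purely formal composition of the three cited equivalences and no further computation is needed.
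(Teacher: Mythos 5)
Your proposal is correct and follows essentially the same route as the paper: the general site-theoretic classifying-topos result from Section~1~(IV), the evaluation equivalence of Section~2, and \textbf{Lemma~\ref{l2}} to match $J_{\rm Cov}$-continuity with locality, then subcanonicity to identify the generic model with the structure sheaf $\mathcal{O}$. The paper's own proof is just a terser version of this composition, so your only addition is making explicit the (standard, and correctly flagged) reduction of continuity from arbitrary covering sieves to the pretopology generators $\{A \to A\{a_i^{-1}\}\}$.
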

\begin{proof}
As a special case of 
the results on classifying topoi presented in the section 1, there is an equivalence between
${\rm Geom}\,(\mathcal{E}, \mathcal{Z}^{\infty})$ and the category of \underline{continuous} left-exact functors from $\mathcal{C}^{\infty}{\rm \bf Rng}_{{\rm fp}}$ to $\mathcal{E}$.\\

This category is equivalent to the full subcategory $\mathcal{C}^{\infty}{\rm  LocRng}\,(\mathcal{E})$ consisting of local $\mathcal{C}^{\infty}-$rings.\\

The identification of the universal local $\mathcal{C}^{\infty}-$ring is the object of $\mathcal{Z}^{\infty}$ represented by the object $\mathcal{C}^{\infty}(\mathbb{R})$ of the Grothendieck Zariski smooth site, this is precisely the structure sheaf (= forgetful functor) $\mathcal{O} : \mathcal{C}^{\infty}{\rm \bf Rng}_{\rm fp} \to {\rm \bf Sets}$.
\end{proof}


\section{A Classifying Topos for the Theory of the von Neumann-regular $\mathcal{C}^{\infty}-$rings}

\hspace{0.5cm}A \textit{von Neumann regular $\mathcal{C}^{\infty}-$ring} is a $\mathcal{C}^{\infty}-$ring $A$ such that one of the (following) equivalent conditions hold:
\begin{itemize}
  \item[(i)]{$(\forall a \in A)(\exists x \in A)(a = a^2x)$;}
  \item[(ii)]{Every principal ideal of $A$ is generated by an idempotent element, \textit{i.e.},
  $$(\forall a \in A)(\exists e \in A)(\exists y \in A)(\exists z \in A)((e^2=e)\& (ey=a) \& (az = e))$$}
  \item[(iii)]{$(\forall a \in A)(\exists ! b \in A)((a=a^2b)\& (b = b^2a))$}
\end{itemize}

For a proof of this result 	in the setting of usual commutative rings, see, for instance, \cite{Mariano}.\\

The class of all von Neumann regular $\mathcal{C}^{\infty}-$rings contains all $\mathcal{C}^{\infty}-$fields and is closed under arbitrary products, quotients and directed limits (cf. \cite{tese}).\\



Any von Neumann regular $\mathcal{C}^{\infty}-$ring $A$ is a $\mathcal{C}^{\infty}-$reduced   $\mathcal{C}^{\infty}-$ring (i.e., $\sqrt[\infty]{(0_A)}=(0_A)$) such that ${\rm Spec}^{\infty}(A)$ is a Boolean space. In an upcoming paper (\cite{BM}) we show that  the converse is also true; moreover, for a fixed $\mathcal{C}^{\infty}-$field, $\mathbb{F}$, we prove that given any Boolean space $(X,\tau)$, there is some $\mathcal{C}^{\infty}-$reduced von Neumann regular $\mathcal{C}^{\infty}-$ring that is an $\mathbb{F}-$algebra, $R_X$, such that ${\rm Spec}^{\infty}(R_X)\approx (X, \tau)$.\\


Now we turn to the problem of constructing a classifying topos for this $\mathcal{C}^{\infty}-$rings. As defined above, a von Neumann-regular $\mathcal{C}^{\infty}-$ring is a $\mathcal{C}^{\infty}-$rings $(A, \Phi)$ in which the first-order formula:

\begin{equation}\label{Vogue}(\forall x \in A)(\exists ! y \in A)((xyx=x)\& (yxy = y))
\end{equation}

holds. Denoting by $\varphi(x,y):= ((xyx=x)\& (yxy=y))$, we note that the formula:
$$(\forall x \in A)(\exists ! y \in A)\varphi(x,y)$$
defines a functional relation from $A$ to $A$, so we can define an unary functional symbol

Let $\mathbb{T}_{\rm vN}$ be the theory of the von Neumann-regular $\mathcal{C}^{\infty}-$rings in the language $\mathcal{L}$ described at the beginning of the first section of this paper. We can define the unary functional symbol $*$ by means of the formula \eqref{Vogue}:

$$\begin{array}{cccc}
    *: & A & \rightarrow & A \\
     & x & \mapsto & y \ {\rm s.t.}\  \varphi(x,y)
  \end{array}$$

in order to obtain a richer language, namely $\mathcal{L}' = \mathcal{L} \cup \{ *\}$.\\

\begin{remark}
Let $\mathbb{T}^{'}$ be the a theory in the language $\mathcal{L}' = \mathcal{L} \cup \{ *\}$, that contains:\\

$\bullet$ the (equational) $\mathcal{L}$-axioms for of $\mathcal{C}^{\infty}-$rings; \\

$\bullet$ the (equational) $\mathcal{L}'$-axiom
$$\sigma:= (\forall x)((xx^{*}x = x)\& (x^*xx^*=x^*))$$

that is, $\mathbb{T}^{'}:= \mathbb{T}\cup \{ \sigma\}$. By the \textbf{Theorem of Extension by Definition} (cf. \textbf{Corollary 4.4.7} of \cite{vanD}), we know that $\mathbb{T}^{'}$ is a conservative extension of $\mathbb{T}$.
\end{remark}

\begin{remark}

(a) Note that in every von Neumann-regular $\mathcal{C}^{\infty}-$ring $V$, since $x^*xx^* = x^*$ holds for every $x \in V$, then $0^* = 0.$

(b) If $\mathbb{F}$ is a $\mathcal{C}^{\infty}-$field, and thus a von Neumann-regular $\mathcal{C}^{\infty}-$ring, we have:
$$\mathbb{F} \models \sigma.$$

Since  $x x^* x = x$ holds for every $x \in \mathbb{F}$, then if $x \neq 0$, we must have

$$x^* = \dfrac{1}{x}.$$

(c) In fact, the unary function $*$ does not belong to the language $\mathcal{L}$.\\

We have seen that $\mathcal{C}^{\infty}(\mathbb{R}^0) \cong \mathbb{R}$, together with its canonical $\mathcal{C}^{\infty}-$structure $\Phi$, is a $\mathcal{C}^{\infty}-$field, thus a von Neumann-regular $\mathcal{C}^{\infty}-$ring, so
$$\mathbb{R} \models \sigma.$$

Now, the function:

$$\begin{array}{cccc}
    *: & \mathbb{R} & \rightarrow & \mathbb{R} \\
     & x & \mapsto & \begin{cases}
                       \dfrac{1}{x}, & \mbox{if } x \neq 0 \\
                       0, & \mbox{otherwise}.
                     \end{cases}
  \end{array}$$

is defined by $\sigma$. However, there is no (continuous, and thus) smooth function $f: \mathbb{R} \to \mathbb{R}$ such that
$$(\forall x \in \mathbb{R})(x^*=f(x)),$$
that is,
$$(\forall f \in \mathcal{C}^{\infty}(\mathbb{R}, \mathbb{R}))(\Phi(f)\neq *)$$
so $*$ is not a symbol of the original language of $\mathcal{C}^{\infty}-$rings.
\end{remark}

\begin{remark}
(a) Since $\varphi(x,y)$ is a conjunction of two equations, the von Neumann-regular $\mathcal{C}^{\infty}-$homomorphisms preserve $*$, \textit{i.e.},
$$(\forall x \in \mathbb{R})(h(x^*)={h(x)}^*)$$

whenever $(A,\Phi)$ and $(B,\Psi)$ are von Neumann-regular $\mathcal{C}^{\infty}-$rings and $h: (A,\Phi) \to (B,\Psi)$
is a von Neumann-regular $\mathcal{C}^{\infty}-$homomorphism.

(b) Since the $\mathcal{L}$-class of von Neumann-regular $\mathcal{C}^{\infty}-$rings is closed under quotients by $\mathcal{C}^{\infty}$-congruences and $\mathcal{C}^{\infty}$-congruences are classified by ideals, it follows from the item (a) that for each  von Neumann-regular $\mathcal{C}^{\infty}-$ring $V$ and any ideal $I \subseteq V$, then $x^* - y^* \in I$ whenever $x -y \in I$.
\end{remark}

\begin{definition}A finitely presented von Neumann regular $\mathcal{C}^{\infty}-$ring is a von Neumann-regular $\mathcal{C}^{\infty}-$ring $(V,\Phi)$ such that there is a finite set $X$ and an ideal $I \subseteq L(X) = {\rm vN}\,(\mathcal{C}^{\infty}(\mathbb{R}^{X}))$ with:
$$V \cong \dfrac{{\rm vN}\,(\mathcal{C}^{\infty}(\mathbb{R}^{X}))}{I}$$
\end{definition}

\begin{remark}$(V,\Phi)$ is a finitely presented von Neumann-regular $\mathcal{C}^{\infty}-$ring if, and only if, the representable functor:
$${\rm Hom}_{\mathcal{C}^{\infty}{\rm \bf {\rm \bf vNRng}}}(V,-): \mathcal{C}^{\infty}{\rm \bf {\rm \bf vNRng}} \to {\rm \bf Set}$$
preserves all directed colimits. That is to say that for every directed system of von Neumann-regular $\mathcal{C}^{\infty}-$rings $\{ (V_i,\Phi_i), \nu_{ij}: (V_i,\Phi_i) \to (V_j,\Phi_j)\}_{i,j \in I}$ we have

$${\rm Hom}_{\mathcal{C}^{\infty}{\rm \bf {\rm \bf vNRng}}}(V, \varinjlim_{i \in I} V_i) = \varinjlim_{i \in I} {\rm Hom}_{\mathcal{C}^{\infty}{\rm \bf {\rm \bf vNRng}}}(V, V_i),$$
(cf. \textbf{Proposition 3.8.14} of \cite{Borceux2})
\end{remark}

Consider the category whose objects are all finitely presented von Neumann regular $\mathcal{C}^{\infty}-$ring and whose morphisms are the $\mathcal{C}^{\infty}-$homomorphisms between them, and denote it by $\mathcal{C}^{\infty}{\rm \bf {\rm \bf vNRng}}_{\rm f.p.}$.\\

\begin{remark}We have:
$${\rm Obj}\,(\mathcal{C}^{\infty}{\rm \bf Rng}_{\rm fp})\cap {\rm Obj}\,(\mathcal{C}^{\infty}{\rm \bf {\rm \bf vNRng}}) \subseteq {\rm Obj}\,(\mathcal{C}^{\infty}{\rm \bf {\rm \bf vNRng}}_{\rm fp}).$$

\end{remark}

Thus, keeping in mind the remarks above, and following the same line of the developments made in the  section 2 we obtain:

\begin{theorem}The category $$\mathcal{C}^{\infty}{\rm \bf {\rm \bf vNRng}}_{{\rm fp}}^{{\rm op}}$$ is a category with finite limits freely generated by the
von Neumann regular $\mathcal{C}^{\infty}-$ring  ${\rm vN}(\mathcal{C}^{\infty}(\mathbb{R}))$, i.e.,
for any category with finite limits $\mathcal{C}$, the evaluation of a left-exact functor $F: \mathcal{C}^{\infty}{\rm \bf {\rm \bf vNRng}}_{{\rm fp}}^{{\rm op}} \to \mathcal{C}$ at ${\rm vN}\,(\mathcal{C}^{\infty}(\mathbb{R}))$ yields the following equivalence of categories:

$$\begin{array}{cccc}
    {\rm ev}_{{\rm vN}(\mathcal{C}^{\infty}(\mathbb{R}))}: & {\rm \bf Lex}\,(\mathcal{C}^{\infty}{\rm \bf {\rm \bf vNRng}}_{{\rm fp}}^{{\rm op}}, \mathcal{C}) & \rightarrow & \underline{\mathcal{C}^{\infty}-{\rm {\rm \bf vNRng}}}\,(\mathcal{C}) \\
     & F & \mapsto & F({\rm vN}\,(\mathcal{C}^{\infty}(\mathbb{R})))
  \end{array}$$

\end{theorem}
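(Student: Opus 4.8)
The plan is to transcribe, almost verbatim, the proof of the corresponding theorem of Section~2 for $\mathcal{C}^{\infty}{\rm \bf Rng}_{\rm fp}^{\rm op}$, replacing throughout the language $\mathcal{L}$ by $\mathcal{L}' = \mathcal{L} \cup \{*\}$ and the free objects $\mathcal{C}^{\infty}(\mathbb{R}^n)$ by the free von Neumann regular $\mathcal{C}^{\infty}$-rings ${\rm vN}(\mathcal{C}^{\infty}(\mathbb{R}^n))$. The legitimacy of this transcription rests on the Remark guaranteeing that $\mathbb{T}'$ is a conservative, \emph{equational} extension of the theory of $\mathcal{C}^{\infty}$-rings: hence $\mathcal{C}^{\infty}{\rm \bf vNRng}$ is a variety of algebras in $\mathcal{L}'$, so that ${\rm vN}(\mathcal{C}^{\infty}(\mathbb{R}^n))$ is the free object on $n$ generators (an $n$-fold coproduct of ${\rm vN}(\mathcal{C}^{\infty}(\mathbb{R}))$), its congruences are classified by ideals, and every finitely presented object admits a coequalizer presentation exactly as in~\eqref{15}. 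First I would check that the evaluation functor is well defined: a left-exact $F$ preserves vN-ring objects, hence sends the vN-ring object ${\rm vN}(\mathcal{C}^{\infty}(\mathbb{R}))$ to a vN-ring object of $\mathcal{C}$, whose underlying structure is read off as in \textbf{Proposition~\ref{Maricruz}}, now interpreting the full signature $\mathcal{L}'$ (in particular an arrow $*^{(R)}\colon R \to R$).

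\medskip

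For faithfulness and fullness the argument is identical. Since $F$ is left exact, $F({\rm vN}(\mathcal{C}^{\infty}(\mathbb{R}^n))) \cong F({\rm vN}(\mathcal{C}^{\infty}(\mathbb{R})))^n$, so any natural transformation $\eta\colon F \Rightarrow G$ is determined on all free objects by its component $\eta_{{\rm vN}(\mathcal{C}^{\infty}(\mathbb{R}))}$, and then on an arbitrary finitely presented object by the equalizer presentation~\eqref{five}; this yields faithfulness. For fullness, a morphism $\varphi\colon F({\rm vN}(\mathcal{C}^{\infty}(\mathbb{R}))) \to G({\rm vN}(\mathcal{C}^{\infty}(\mathbb{R})))$ of vN-ring objects extends uniquely to a natural transformation by setting $\eta_{{\rm vN}(\mathcal{C}^{\infty}(\mathbb{R}))} = \varphi$ and propagating along products and equalizers.

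\medskip

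For density, given a vN-ring object $R$ in $\mathcal{C}$ I would build $\phi_R$ exactly as in Section~2: put $\phi_R({\rm vN}(\mathcal{C}^{\infty}(\mathbb{R}^n))) = R^n$; on a homomorphism $p\colon {\rm vN}(\mathcal{C}^{\infty}(\mathbb{R}^k)) \to {\rm vN}(\mathcal{C}^{\infty}(\mathbb{R}^n))$ use the free-object correspondence
$$\mathcal{C}^{\infty}{\rm \bf vNRng}_{\rm fp}({\rm vN}(\mathcal{C}^{\infty}(\mathbb{R}^k)),{\rm vN}(\mathcal{C}^{\infty}(\mathbb{R}^n))) \;\cong\; {\rm vN}(\mathcal{C}^{\infty}(\mathbb{R}^n))^k,$$
each coordinate being an equivalence class of an $\mathcal{L}'$-term, which the vN-ring-object structure of $R$ interprets as an arrow $R^n \to R$; then define $\phi_R$ on a finitely presented object $\dfrac{{\rm vN}(\mathcal{C}^{\infty}(\mathbb{R}^n))}{\langle p_1,\dots,p_k\rangle}$ by the equalizer~\eqref{16} and on arrows by the universal property of equalizers, checking independence of representatives. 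Left-exactness (preservation of the terminal object, binary products, and equalizers) follows from the very same three-coequalizer diagram chase used in Section~2, and ${\rm ev}_{{\rm vN}(\mathcal{C}^{\infty}(\mathbb{R}))}(\phi_R) = R$ by construction, so the evaluation functor is fully faithful and dense, hence an equivalence.

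\medskip

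The one genuinely new point---and where I expect the main friction---is the bookkeeping for the extra unary operation $*$. Unlike the purely smooth case, a morphism between free objects is now a tuple of $\mathcal{L}'$-terms rather than of smooth functions, so I must verify that (a) a vN-ring object really supplies a compatible interpretation $*^{(R)}$ alongside the smooth operations, (b) these interpretations behave well under the equalizer construction defining $\phi_R$ and under the passage from coequalizers in $\mathcal{C}^{\infty}{\rm \bf vNRng}_{\rm fp}$ to equalizers in $\mathcal{C}$, and (c) $\phi_R(h)$ is well defined on equivalence classes, now taken modulo the richer congruences of $\mathbb{T}'$. All three reduce to the equational nature of $\mathbb{T}'$ asserted in the Remark, which is precisely what makes $\mathcal{C}^{\infty}{\rm \bf vNRng}$ a variety and licenses the coequalizer presentations; once that is invoked, the diagram chase of Section~2 transfers without change.
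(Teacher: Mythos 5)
Your proposal is correct and takes essentially the same route as the paper, which in fact offers no separate proof of this theorem beyond the phrase ``following the same line of the developments made in the section 2'': the whole content is precisely the observation you make explicit, namely that $\mathbb{T}'$ is an \emph{equational} $\mathcal{L}'$-theory, so $\mathcal{C}^{\infty}{\rm \bf vNRng}$ is a variety with free objects ${\rm vN}(\mathcal{C}^{\infty}(\mathbb{R}^n))$, ideal-classified congruences and coequalizer presentations, after which the Section~2 argument transfers verbatim. If anything your write-up is more careful than the paper's, since you explicitly track the interpretation of the extra unary symbol $*$ and complete the faithfulness step on non-free finitely presented objects via the equalizer presentation, a step Section~2 itself leaves implicit.
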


Combining the results presented in this section and the one stated in the {section 1}  on classifying topoi,  we obtain the  following:

\begin{theorem}The presheaf topos ${\rm \bf Sets}^{\mathcal{C}^{\infty}{\rm \bf {\rm \bf vNRng}}_{{\rm fp}}}$ is a classifying topos for von Neumann regular $\mathcal{C}^{\infty}-$rings, and the universal von Neumann regular $\mathcal{C}^{\infty}-$ring $R$ is the von Neumann regular $\mathcal{C}^{\infty}-$ring object in ${\rm \bf Sets}^{\mathcal{C}^{\infty}{\rm \bf {\rm \bf vNRng}}_{{\rm fp}}}$ that is given by ${\mathcal{C}^{\infty}{\rm \bf vNRng}_{{\rm fp}}}({\rm vN}\,(\mathcal{C}^{\infty}(\mathbb{R}),-))$, thus it is naturally isomorphic to  the  forgetful functor from $\mathcal{C}^{\infty}{\rm \bf {\rm \bf vNRng}}_{{\rm fp}}$ to ${\rm \bf Sets}$.
Thus, for any Grothendieck topos $\mathcal{E}$ there is an equivalence of categories, natural in $\mathcal{E}$:

$$\begin{array}{cccc}
     & {\rm Geom}\,(\mathcal{E},{\rm \bf Sets}^{\mathcal{C}^{\infty}{\rm \bf {\rm \bf vNRng}}_{{\rm fp}}} ) & \rightarrow & \mathcal{C}^{\infty}{\rm {\rm \bf vNRng}}\,(\mathcal{E}) \\
     & f & \mapsto & f^{*}(R)
  \end{array}$$
\end{theorem}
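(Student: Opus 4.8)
The plan is to argue exactly as in the proof that ${\rm \bf Sets}^{\mathcal{C}^{\infty}{\rm \bf Rng}_{{\rm fp}}}$ classifies $\mathcal{C}^{\infty}$-rings, transporting that argument along the preceding theorem, which here plays the role that the freely-generated statement for $\mathcal{C}^{\infty}{\rm \bf Rng}_{{\rm fp}}^{{\rm op}}$ played there. Since a Grothendieck topos $\mathcal{E}$ is in particular finitely complete, and since $\mathcal{C}^{\infty}{\rm \bf vNRng}_{{\rm fp}}^{{\rm op}}$ is a small left-exact category (its finite limits exist by the preceding theorem), both ingredients needed to invoke the general results on classifying topoi recalled in the first section are in place.

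First I would set $\mathcal{C} := \mathcal{C}^{\infty}{\rm \bf vNRng}_{{\rm fp}}^{{\rm op}}$ and apply the general fact that, for a small left-exact category, the presheaf topos ${\rm \bf Sets}^{\mathcal{C}^{\rm op}}$ classifies the theory of left-exact functors on $\mathcal{C}$, with the Yoneda embedding as generic model. As $\mathcal{C}^{\rm op} = \mathcal{C}^{\infty}{\rm \bf vNRng}_{{\rm fp}}$, this yields, for every Grothendieck topos $\mathcal{E}$, a natural equivalence
$${\rm Geom}\,(\mathcal{E}, {\rm \bf Sets}^{\mathcal{C}^{\infty}{\rm \bf vNRng}_{{\rm fp}}}) \simeq {\rm \bf Lex}\,(\mathcal{C}^{\infty}{\rm \bf vNRng}_{{\rm fp}}^{{\rm op}}, \mathcal{E}).$$
I would then compose this with the equivalence ${\rm ev}_{{\rm vN}(\mathcal{C}^{\infty}(\mathbb{R}))}$ furnished by the preceding theorem, namely ${\rm \bf Lex}\,(\mathcal{C}^{\infty}{\rm \bf vNRng}_{{\rm fp}}^{{\rm op}}, \mathcal{E}) \simeq \underline{\mathcal{C}^{\infty}-{\rm \bf vNRng}}\,(\mathcal{E})$, obtaining the desired chain ${\rm Geom}\,(\mathcal{E}, {\rm \bf Sets}^{\mathcal{C}^{\infty}{\rm \bf vNRng}_{{\rm fp}}}) \simeq \mathcal{C}^{\infty}{\rm \bf vNRng}\,(\mathcal{E})$, natural in $\mathcal{E}$ because each constituent equivalence is.

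To identify the generic model I would use the principle recalled in the first section: since $\mathcal{C}^{\infty}{\rm \bf vNRng}_{{\rm fp}}^{{\rm op}}$ is freely generated by $u = {\rm vN}(\mathcal{C}^{\infty}(\mathbb{R}))$, the universal object is $M(u) = {\mathcal{C}^{\infty}{\rm \bf vNRng}_{{\rm fp}}^{{\rm op}}}(-, u)$, and because the topology on a presheaf site is the trivial one (so sheafification is the identity) this is simply the representable presheaf, i.e.\ the functor ${\mathcal{C}^{\infty}{\rm \bf vNRng}_{{\rm fp}}}({\rm vN}(\mathcal{C}^{\infty}(\mathbb{R})), -)$ on $\mathcal{C}^{\infty}{\rm \bf vNRng}_{{\rm fp}}$. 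Finally I would identify this representable functor with the forgetful functor: since ${\rm vN}(\mathcal{C}^{\infty}(\mathbb{R}))$ is the free von Neumann regular $\mathcal{C}^{\infty}$-ring on one generator, the free--forgetful adjunction gives, naturally in $A$,
$${\rm Hom}_{\mathcal{C}^{\infty}{\rm \bf vNRng}_{{\rm fp}}}({\rm vN}(\mathcal{C}^{\infty}(\mathbb{R})), A) \cong {\rm Hom}_{{\rm \bf Sets}}(\{*\}, U(A)) \cong U(A),$$
which exhibits the universal von Neumann regular $\mathcal{C}^{\infty}$-ring $R$ as the forgetful functor $U$, as claimed.

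The one point demanding genuine care, rather than routine diagram-chasing, is the justification that ${\rm vN}(\mathcal{C}^{\infty}(\mathbb{R}))$ really is the free von Neumann regular $\mathcal{C}^{\infty}$-ring on a single generator in the extended equational language $\mathcal{L}' = \mathcal{L} \cup \{*\}$; this is what legitimizes both the free--forgetful adjunction used in the last step and, upstream, the hypotheses of the preceding theorem. Everything else is the formal concatenation of two already-established equivalences together with the observation that a presheaf topos carries the trivial topology. I therefore expect no serious obstacle beyond keeping the variance (the two occurrences of $(-)^{\rm op}$) straight and confirming that the composite equivalence is natural in $\mathcal{E}$.
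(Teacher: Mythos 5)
Your proposal is correct and follows exactly the route the paper intends: the paper gives no explicit proof beyond ``combining the results presented in this section and the ones stated in section 1,'' which is precisely your concatenation of the general presheaf-classifying-topos fact for the small left-exact category $\mathcal{C}^{\infty}{\rm \bf vNRng}_{{\rm fp}}^{{\rm op}}$ with the evaluation equivalence ${\rm ev}_{{\rm vN}(\mathcal{C}^{\infty}(\mathbb{R}))}$ from the preceding theorem. Your additional care in identifying the representable presheaf with the forgetful functor via the free--forgetful adjunction (using that ${\rm vN}(\mathcal{C}^{\infty}(\mathbb{R}))$ is free on one generator in the language $\mathcal{L}'$) makes explicit a step the paper leaves implicit.
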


\section{Final remarks and future works}

\hspace{0.5cm}We have described classifying toposes for three theories: the theory of $\mathcal{C}^{\infty}-$rings and the theories of local and of von Neumann regular $\mathcal{C}^{\infty}-$rings.
In \cite{rings2}, I. Moerdijk, N. van Qu\^{e} and G. Reyes present the classifying topos for the (geometric) theory of Archimedean $\mathcal{C}^{\infty}-$rings. This reinforce the following questions:

\begin{itemize}
    \item{Are there other sensible descriptions of classifying toposes for other distinguished classes of $\mathcal{C}^{\infty}-$rings?}
    \item{In particular, is there a nice description of the theory of von Neumann regular $\mathcal{C}^{\infty}-$rings in the language of $\mathcal{C}^{\infty}-$rings, $\mathcal{L}$ (without the need for the new symbol for the ``quasi-inverse'')?}
\end{itemize}

In the paper (under preparation) \cite{BM}, we use von Neumann regular $\mathcal{C}^{\infty}-$rings in order to classify Boolean algebras. We show that a von Neumann regular $\mathcal{C}^{\infty}-$ring is isomorphic to the $\mathcal{C}^{\infty}-$ring of global sections of the structure sheaf of its affine $\mathcal{C}^{\infty}-$scheme (see \cite{Joyce}). Such results motivate us to look for similar characterizations for some distinguished classes of $\mathcal{C}^{\infty}-$rings in terms of its $\mathcal{C}^{\infty}-$spectrum topology. \\





\end{document}